\documentclass[11pt]{article}

\usepackage[a4paper,margin=1in]{geometry}
\usepackage[numbers,sort&compress]{natbib}
\usepackage{microtype}
\usepackage{amsmath,amssymb,amsfonts,amsthm,mathrsfs,mathtools}
\usepackage{algorithm}
\usepackage{algpseudocode}
\usepackage{booktabs}
\usepackage{multirow}
\usepackage{array}
\usepackage{longtable}
\usepackage{graphicx}
\usepackage{subcaption}
\usepackage{xcolor}
\usepackage{enumitem}
\usepackage{flafter}
\usepackage[section]{placeins}
\usepackage{accents}
\usepackage[colorlinks=true,linkcolor=blue,citecolor=blue,urlcolor=blue,hypertexnames=false]{hyperref}

\graphicspath{{figures/}}
\allowdisplaybreaks[4]
\numberwithin{equation}{section}
\newtheorem{theorem}{Theorem}[section]
\newtheorem{assumption}[theorem]{Assumption}

\newtheorem{lemma}[theorem]{Lemma}

\theoremstyle{definition}
\newtheorem{definition}[theorem]{Definition}
\newtheorem{example}[theorem]{Example}
\theoremstyle{remark}
\newtheorem{remark}[theorem]{Remark}

\newcommand{\ubar}[1]{\underaccent{\bar}{#1}}
\renewcommand{\leq}{\leqslant}
\renewcommand{\geq}{\geqslant}
\def\epsilon{\varepsilon}
\def\phi{\varphi}

\title{Adaptive Bregman Proximal Stochastic Gradient with a Stabilized Barzilai--Borwein Step Size}
\author{
Chenhan Jin$^{1}$ \quad Shengze Xu$^{1}$ \quad Binghui Xie$^{1}$ \quad Kaiwen Zhou$^{1}$\\
Fan JIA$^{2}$ \quad James Cheng$^{1}$ \quad
Tieyong Zeng$^{3,4}$\thanks{Corresponding author: \texttt{tieyongzeng@bnbu.edu.cn}}\\[0.5em]
$^{1}$The Chinese University of Hong Kong \quad
$^{2}$University of Utah\\
$^{3}$Beijing Normal-Hong Kong Baptist University \quad
$^{4}$Guangzhou Nanfang College
}
\date{}

\begin{document}

\maketitle

\begin{abstract}
Bregman proximal stochastic gradient (BPSG) methods bring variance-reduced composite optimization to objectives whose geometry is poorly captured by Euclidean smoothness. Their performance, however, remains sensitive to the step size: raw stochastic curvature estimates can fluctuate sharply, whereas line searches add repeated proximal evaluations. We introduce Ada-BPSG, a line-search-free BPSG method that couples the SAGA gradient table with a stabilized Barzilai--Borwein (BB) candidate. A mediant aggregates incremental secant information so that nearly singular local ratios receive little weight, and an explicit safeguard translates the resulting curvature estimate into the bounded step-size sequence required for convergence. This design yields a direct analytical chain from relative smoothness and component-wise variance control to convergence in finite-dimensional normed spaces. We prove an $O(n/K)$ ergodic rate for convex objectives, a restarted linear rate under relative quadratic growth, and an $O(1/K)$ bound for a Bregman proximal residual in the nonconvex setting. On logistic regression and sparse nonnegative matrix factorization, Ada-BPSG combines low objective values with substantially less sensitivity to the initial step size than standard variance-reduced baselines, while avoiding line search.
\end{abstract}

\section{Introduction}
We study finite-sum composite optimization on a finite-dimensional normed vector space $\mathbf{E}$:
\begin{equation}\label{Problem}
    \min_{x\in\mathbf{E}}\ F(x):=f(x)+h(x),
    \qquad
    f(x):=\frac{1}{n}\sum_{i=1}^{n}f_i(x),
\end{equation}
where $[n]:=\{1,\ldots,n\}$ indexes the component losses $f_i:\mathbf{E}\to\mathbb{R}$, and $h:\mathbf{E}\to(-\infty,+\infty]$ is a proper convex regularizer whose proximal subproblem is tractable. This model includes regularized empirical risk minimization, constrained learning, and structured estimation. When $h=0$ and $\mathbf{E}=\mathbb{R}^d$, stochastic gradient descent (SGD) samples $i_k\in[n]$ and updates $x_{k+1}=x_k-\eta_k\nabla f_{i_k}(x_k)$ with step size $\eta_k>0$.

For a nonsmooth $h$, the Euclidean proximal stochastic-gradient update replaces the gradient step by
\begin{equation} \label{SGD_update}
    x_{k+1}
    =\arg\min_x
    \left\{
        \langle\widetilde{\nabla}_k,x\rangle
        +\frac{1}{2\eta_k}\|x-x_k\|_2^2+h(x)
    \right\},
\end{equation}
where $\widetilde{\nabla}_k$ is an unbiased estimator of the full gradient $\nabla f(x_k)$ conditional on the iteration history. With a constant step size, the persistent variance of a plain sampled gradient can prevent the iterates from settling near a solution. Two complementary responses are therefore central to stochastic optimization: variance-reduced (VR) estimators, which make the gradient noise vanish, and adaptive step sizes, which respond to the local scale of the objective. The first family includes SAG, SAGA, SVRG, and SARAH \citep{DBLP:conf/nips/RouxSB12,DBLP:conf/nips/DefazioBL14,DBLP:conf/nips/Johnson013,DBLP:conf/icml/NguyenLST17}; the second includes the Barzilai--Borwein (BB) rule, Polyak step sizes, AdaGrad, and Adam \citep{barzilai1988two,polyak1964some,DBLP:journals/jmlr/DuchiHS11,DBLP:journals/corr/KingmaB14}. Their combination has produced adaptive VR methods such as SVRG-BB, SAG-BB, AdaSVRG, AI-SARAH, ADASPIDER, AdProxGD, and 2AdaSPIDER \citep{DBLP:conf/nips/TanMDQ16,DBLP:journals/corr/abs-2102-09645,DBLP:journals/corr/abs-2102-09700,kavis2022adaptive,malitsky2024adaptive,huang2025faster}.

The Euclidean model behind \eqref{SGD_update} can nevertheless be a poor match for modern objectives. A globally Lipschitz Euclidean gradient is unavailable in applications such as neural-network training, quantum tomography, and matrix or tensor factorization \citep{hasannasab2020parseval,kuang2019fusion,comon2008symmetric}. Bregman geometry replaces the squared Euclidean distance by a divergence generated by a problem-adapted kernel $\psi$:
\begin{equation} \label{BPSG_update}
    x_{k+1}
    =\arg\min_x
    \left\{
        \langle\widetilde{\nabla}_k,x\rangle
        +\frac{1}{\eta_k}D_\psi(x,x_k)+h(x)
    \right\}.
\end{equation}
Section \ref{prelimi} formally defines the Bregman divergence $D_\psi$. The kernel encodes the geometry of the domain, while relative smoothness replaces a global Euclidean Lipschitz-gradient condition \citep{doi:10.1137/16M1099546,bauschke2017descent,bolte2018first}.

This geometric extension sharpens the step-size problem. Bregman divergences are generally asymmetric, so Euclidean polarization identities no longer drive the descent proof, and stochastic secant ratios may vary markedly across sampled components. Diminishing schedules trade this instability for slow progress, while line searches add repeated objective or proximal evaluations. Our goal is to retain stochastic curvature adaptation without either cost.

\subsection{Related work and positioning}
Bregman projection, forward--backward, mirror-prox, accelerated Bregman proximal-gradient, and relative-smooth first-order methods establish the deterministic geometric foundation of this work \citep{bauschke1997legendre,butnariu2000totally,van2017forward,bui2021bregman,doi:10.1137/16M1099546,hanzely2021accelerated,bauschke2017descent,bolte2018first}. In Euclidean geometry, optimized proximal-gradient schemes have also been applied to structured inverse problems, including $\ell_1$- and total-variation-regularized non-blind image deblurring \citep{wang2025improved}. Their stochastic counterparts incorporate sampled gradients, primal--dual splitting, or variance reduction \citep{silvetifalls2021stochastic,nguyen2023stochastic,wang2023bregman,wang2024bregman}. In particular, BPSG-SAGA and BPSG-SARAH extend Bregman proximal optimization to variance-reduced estimators, and extrapolated BPSG uses line search to control the resulting updates \citep{wang2023bregman,wang2024bregman}.

Adaptive stochastic methods approach the same stability question through the step size. AdaGrad and Adam accumulate gradient statistics \citep{DBLP:journals/jmlr/DuchiHS11,DBLP:journals/corr/KingmaB14,DBLP:conf/icml/WardWB19}; universal and relative-smoothness schemes estimate local regularity in mirror-descent, primal--dual, or extragradient frameworks \citep{levy2018online,kavis2019unixgrad,bach2019universal,cohen2021relative,antonakopoulos2019adaptive,antonakopoulos2021adaptive,antonakopoulos2022undergrad}; and BB methods extract scalar curvature from secant pairs \citep{barzilai1988two}. Stabilized BB and BB-VR variants improve this estimate in stochastic Euclidean problems \citep{DBLP:conf/nips/TanMDQ16,burdakov2019stabilized,ma2018stochastic,yang2023improved,zhou2024adabb}. Related powered and double-adaptive methods use BB or SPIDER-style estimators in Euclidean finite-sum and minimax settings \citep{yang2023improved,huang2021efficient,huang2025faster}.

Ada-BPSG connects these lines through one mechanism: the SAGA table supplies incremental secant pairs, a stabilized BB rule converts them into a curvature-sensitive candidate, and clipping converts that candidate into a provably stable Bregman proximal step. Table \ref{tab:positioning} places it among the closest Bregman proximal methods. This leads to the central question of the paper:

\begin{table}[t]
\centering
\small
\setlength{\tabcolsep}{5pt}
\caption{Positioning of Ada-BPSG among Bregman proximal (variance-reduced) methods. All operate on a distance-generating kernel $\psi$. ``LS-free'' marks methods needing no line search; ``Needs $\bar L,M$'' lists the smoothness constants required to set the step or safeguard. Rates are ergodic orders in the iteration count $K$ at fixed batch size; ``lin.\ (RQG)'' is the restarted linear rate under relative quadratic growth. A dash marks a regime the cited analysis does not cover: the stochastic Bregman baselines \citep{wang2023bregman,wang2024bregman} are established only in the nonconvex setting, so the convex ergodic and restarted-linear guarantees are, to our knowledge, new to Ada-BPSG. Crucially, our analysis is carried out in a general finite-dimensional normed space---using only the dual norm, with no Euclidean polarization or norm decomposition---so the matching $O(n/K)$ order is obtained under strictly weaker geometric assumptions rather than by improving the rate. Over BPSG-SAGA, Ada-BPSG adds only two scalar accumulations per refreshed component.}
\label{tab:positioning}
\begin{tabular}{lcccccc}
\toprule
Method & Oracle & Step-size rule & LS-free & Needs $\bar L,M$ & Convex & Nonconvex\\
\midrule
BPG \citep{bolte2018first} & full & fixed $1/\bar L$ & Yes & $\bar L$ & $O(1/K)$ & $O(1/K)$\\
BPSG-SAGA \citep{wang2023bregman,wang2024bregman} & SAGA & fixed & Yes & $\bar L,M$ & -- & $O(1/K)$\\
BPSGE \citep{wang2024bregman} & SAGA/SARAH & line search & No & $\bar L,M$ & -- & $O(1/K)$\\
\textbf{Ada-BPSG (ours)} & SAGA & safeguarded BB & Yes & $\bar L,M$ & $O(n/K)$; lin.\ (RQG) & $O(1/K)$\\
\bottomrule
\end{tabular}
\end{table}

\emph{Can a line-search-free BPSG method adapt to stochastic curvature while preserving convergence in non-Euclidean composite optimization?}

\subsection{Contributions}
We answer this question with Ada-BPSG. Its contributions are threefold.
\begin{itemize}
    \item \textbf{A stable adaptive step from SAGA information.}\par We aggregate recent component-wise secant pairs by a mediant, equivalently a denominator-weighted mean of local BB ratios. The weighting suppresses ratios associated with nearly singular curvature estimates. Clipping and a monotone update then turn this raw candidate into a bounded, line-search-free step-size sequence. For linear-prediction losses, the same quantities can be maintained with an $O(n)$ scalar table instead of the general $O(nd)$ SAGA table.

    \item \textbf{A convergence analysis aligned with the algorithm.}\par Relative smoothness controls Bregman descent, local component smoothness controls the SAGA table variance, and the safeguard closes the resulting Lyapunov recursion. The entire argument is carried out in a general finite-dimensional normed space, relying only on the dual norm, dual-norm Young inequalities, and the three-point Bregman inequality: it never invokes the Euclidean polarization identity or the norm decomposability that Euclidean adaptive variance-reduced analyses rest on. This chain yields an $O(n/K)$ ergodic rate for convex objectives, a restarted linear rate under relative quadratic growth, and an $O(1/K)$ bound on the expected squared Bregman proximal residual for nonconvex objectives. Matching these Euclidean-SAGA orders under the weaker non-Hilbert geometry---rather than improving them---is the technical content of the analysis. The convex ergodic and restarted-linear guarantees moreover have no counterpart in the SAGA/SARAH-based stochastic Bregman methods we build on, which are analyzed only in the nonconvex regime \citep{wang2023bregman,wang2024bregman}.

    \item \textbf{Experiments that test the design mechanism.}\par A curvature diagnostic shows why arithmetic averages of stochastic BB ratios can spike and how the mediant stabilizes the candidate. Across four logistic-regression datasets, Ada-BPSG reaches low objective gaps in fewer effective passes and remains substantially less sensitive to initialization than standard VR baselines. On a simplex-constrained Poisson inverse problem under the entropy kernel---an instance meeting every hypothesis of the analysis, in which variance reduction is genuinely needed---it exhibits the predicted convex rate in genuinely non-Euclidean geometry and, by adapting past the conservative worst-case safeguard, improves on every fixed-step baseline held to its a-priori guaranteed step by more than two orders of magnitude---a gain that carries over unchanged to a region-pooled Poisson unmixing of the real Samson hyperspectral scene; on sparse nonnegative matrix factorization it attains the lowest objective curves among the compared Bregman methods while avoiding the line search used by the accelerated baseline.
\end{itemize}

\section{Algorithm}
\label{s:algorithm}
Ada-BPSG adds one curvature-adaptive scalar to the BPSG-SAGA update. The construction proceeds from the classical BB secant ratio, identifies why an arithmetic average of stochastic ratios remains unstable, and replaces that average by a safeguarded mediant computed from information already stored by SAGA.

\subsection{Barzilai--Borwein step size}
\label{s:algorithm-deterministic}
The BB method is a scalar quasi-Newton scheme: it approximates the local Hessian by $\eta_k^{-1}I$ and chooses $\eta_k$ to fit a secant equation. In the Euclidean setting, define the iterate and gradient differences
$s_k:=x_k-x_{k-1}$ and $y_k:=\nabla f(x_k)-\nabla f(x_{k-1})$. Minimizing either $\|\eta_k^{-1}s_k-y_k\|_2$ or $\|s_k-\eta_k y_k\|_2$ gives the two classical rules \citep{barzilai1988two}
\begin{equation} \label{BBmethods}
    \eta_k^{\mathrm{BB1}}=\frac{s_k^Ts_k}{s_k^Ty_k}
    \qquad\text{and}\qquad
    \eta_k^{\mathrm{BB2}}=\frac{s_k^Ty_k}{y_k^Ty_k}.
\end{equation}
The first and second expressions correspond to the two residual problems, respectively. Both estimate inverse curvature along the latest displacement; our construction follows the BB2 orientation because SAGA directly supplies component-gradient differences.

\subsection{Why stochastic BB ratios need stabilization}
A BB ratio uses only one displacement and is therefore sensitive to sampled curvature. In the BB2 rule, a small $y_k^Ty_k$ can produce an excessively large step, especially near a solution or along a low-curvature component. Denominator regularization and explicit bounds alleviate this behavior \citep{fletcher2005barzilai,burdakov2019stabilized,ma2018stochastic,zhou2024adabb}, but an arithmetic mean over recent ratios can still be dominated by a single nearly singular denominator.

Figure \ref{fig: local information} isolates this failure mode on the \emph{ijcnn1} dataset \citep{chang2011libsvm}. For the regularized logistic component
\[
    f_i(x)=\log(1+\exp(-b_i a_i^Tx))+\frac{\lambda}{2}\|x\|_2^2,
\]
$a_i\in\mathbb{R}^d$ is a feature vector and $b_i\in\{-1,+1\}$ is its label. The Hessian at epoch $k$ is
\[
    H_k=\frac1n\sum_{i=1}^n p_i(x_k)a_ia_i^T+\lambda I,
    \qquad
    p_i(x):=\frac{\exp(-b_i a_i^Tx)}{[1+\exp(-b_i a_i^Tx)]^2},
\]
so $p_i(x)$ records the sample-dependent curvature weight. Even as these weights settle, the vanilla and arithmetic-mean stabilized BB rules in Figure \ref{fig: local information-2} spike to $10^3$--$10^4$. In contrast, the mediant candidate stays within the smallest and smoothest range among the plotted rules. The diagnostic motivates a division of labor that runs through the paper: the mediant stabilizes the empirical curvature estimate, and the safeguard enforces the bounds used by the convergence analysis.

\subsection{Stabilized Barzilai--Borwein candidate}
\label{s:algorithm-saga-bb}

\begin{figure}[t!]
    \centering
    \begin{subfigure}[b]{0.46\textwidth}
        \centering
        \includegraphics[width=\linewidth]{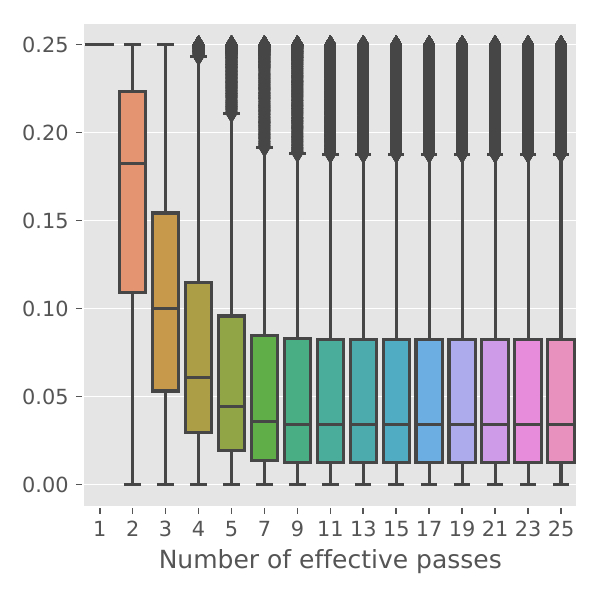}
        \caption{}\label{fig: local information-1}
    \end{subfigure}
    \hfill
    \begin{subfigure}[b]{0.46\textwidth}
        \centering
        \includegraphics[width=\linewidth]{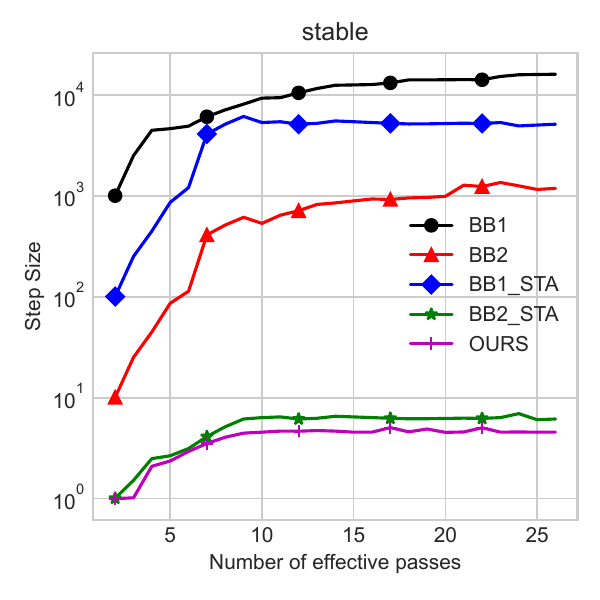}
        \caption{}\label{fig: local information-2}
    \end{subfigure}
    \caption{Stochastic curvature and BB step sizes on \emph{ijcnn1}, averaged over four runs. Left: distribution of the component curvature weights $p_i(x_k)$. Right: step-size trajectories over 30 effective passes; the mediant-based rule suppresses the spikes produced by vanilla and arithmetic-mean BB variants.}
    \label{fig: local information}
\end{figure}
For positive denominators, the mediant of ratios satisfies
\[
    \frac{\sum_i a_i}{\sum_i b_i}
    =
    \sum_i \frac{b_i}{\sum_j b_j}\frac{a_i}{b_i}.
\]
Hence the mediant is a denominator-weighted arithmetic mean: a nearly singular denominator contributes little weight instead of dominating the aggregate.

To apply this idea to SAGA, let $\phi_i^t$ denote the most recent point stored for component $i$ at iteration $t$, and let $\mathcal W_k$ collect the component updates since the previous step-size refresh. Each record $(i,t)\in\mathcal W_k$ supplies the secant pair
\[
    s_{i,t}:=\phi_i^{t+1}-\phi_i^t,
    \qquad
    y_{i,t}:=\nabla f_i(\phi_i^{t+1})-\nabla f_i(\phi_i^t).
\]
Algorithm \ref{alg:stc_schm} maintains the two window sums
\[
    \xi_k:=\sum_{(i,t)\in\mathcal W_k}|\langle y_{i,t},s_{i,t}\rangle|,
    \qquad
    \delta_k:=\sum_{(i,t)\in\mathcal W_k}\|y_{i,t}\|_*^2,
\]
where $\langle\cdot,\cdot\rangle$ is the duality pairing and $\|\cdot\|_*$ is the dual norm. The raw adaptive candidate is
\begin{equation} \label{stableBB}
    \widehat{\eta}_{k}^{\text{BB}}
    =
    \begin{cases}
    \dfrac{1}{\alpha}\dfrac{\xi_k}{\delta_k+\theta\xi_k}, & \xi_k>0,\\[4pt]
    \eta_{k-1}, & \xi_k=0.
    \end{cases}
\end{equation}
The absolute value makes the numerator nonnegative when a nonconvex component produces a negative secant pairing. The buffer $\theta>0$ prevents denominator degeneracy and bounds the raw candidate by $1/(\alpha\theta)$; $\alpha>0$ controls its scale. In Euclidean space, the construction reduces to an aggregated BB2-type ratio.

The safeguard maps this curvature estimate to
\begin{equation} \label{clippedBB}
    q_k=\operatorname{clip}_{[\eta_{\min},\eta_{\max}]}
    (\widehat{\eta}_{k}^{\text{BB}})
    :=
    \min\{\eta_{\max},\max\{\eta_{\min},\widehat{\eta}_{k}^{\text{BB}}\}\},
\end{equation}
where $0<\eta_{\min}\leq\eta_{\max}$. Algorithm \ref{alg:stc_schm} then uses the monotone update
\[
    \eta_k=\max\{\eta_{k-1},q_k\},
\]
which preserves the interval and supplies the monotonicity needed by the convex Bregman telescope. Thus the raw ratio governs adaptation within the interval, while the safeguard supplies the stability invariant used in Section \ref{s:theory}.

\subsection{Main method}
Algorithm \ref{alg:stc_schm} integrates the candidate into BPSG-SAGA. A batch $B_k$ refreshes the corresponding table entries, the standard SAGA correction forms the unbiased estimator $\widetilde{\nabla}_k$, and the same table differences update $\xi_k$ and $\delta_k$. These accumulators are reset every $m$ iterations after refreshing the step size, so adaptation requires only scalar reductions beyond the gradient-table operations.

The parameters have separate, explicit roles: $m$ is the refresh frequency, $\theta$ regularizes the denominator, $\alpha$ rescales the raw ratio, and $[\eta_{\min},\eta_{\max}]$ defines the stability range. The algorithm adapts within this range instead of following a prescribed diminishing schedule. When $\mathbf{E}=\mathbb{R}^d$ and $\psi(x)=\frac12\|x\|_2^2$, the Bregman update becomes the Euclidean proximal update \eqref{SGD_update}; we call this specialization SAGA-BB in the experiments.

\begin{algorithm}[t]
\caption{Ada-BPSG}
\label{alg:stc_schm}
\small
\begin{algorithmic}[1]

\State \textbf{Input:} max iterations $K$, batch-size $b\in\{1,\ldots,n\}$, initial point $x_0$, kernel $\psi$
\Statex \hspace{\algorithmicindent} initial step-size $\eta_0\in[\eta_{\min},\eta_{\max}]$, update frequency $m\geq1$
\Statex \hspace{\algorithmicindent} BB scaling $\alpha>0$, BB buffer $\theta>0$, safeguard bounds $0<\eta_{\min}\leq\eta_{\max}$
\State \textbf{Initialize:} $\phi_i^{0}=x_0$, gradient table $\{\nabla f_i(\phi_i^0)\}_{i=1}^n$
\Statex \hspace{\algorithmicindent} $\mu_0=\frac{1}{n}\sum_{i=1}^n\nabla f_i(\phi_i^{0})$, $\xi_0=0$, and $\delta_0=0$

\For{$k = 0,1,\dots,K-1$}

    \If{$k>0$}
        \State $\eta_k=\eta_{k-1}$
    \EndIf

    \If{$k>0$ and $k \bmod m = 0$}
        \State $\widehat{\eta}_k =
        \begin{cases}
        \frac{1}{\alpha}\frac{\xi_k}{\delta_k + \theta\xi_k}, & \xi_k>0,\\
        \eta_{k-1}, & \xi_k=0,
        \end{cases}$
        \State $\eta_k=\max\{\eta_{k-1},
        \operatorname{clip}_{[\eta_{\min},\eta_{\max}]}(\widehat{\eta}_k)\}$
        \State $\xi_k = 0$, $\delta_k = 0$
    \EndIf

    \State Randomly pick a batch $B_k$ uniformly without replacement
    \State For $i \in B_k$, set $\phi_i^{k+1}=x_k$ and update $\nabla f_i(\phi_i^{k+1})$
    \State For $i\notin B_k$, set $\phi_i^{k+1}=\phi_i^k$
    
    \State $\widetilde{\nabla}_{k} =
        \frac{1}{b}\sum_{i \in B_k}
        \left(\nabla f_i(\phi_i^{k+1})-\nabla f_i(\phi_i^{k})\right)
        + \mu_k$
    
    \State $x_{k+1}
        = \arg\min_{x}\left\{
            h(x) + \langle \widetilde{\nabla}_k, x\rangle
            + \frac{1}{\eta_k} D_{\psi}(x,x_k)
        \right\}$

    \State Compute $y_i = \nabla f_i(\phi_i^{k+1}) - \nabla f_i(\phi_i^{k})$ for $i \in B_k$
    \State Compute $s_i = \phi_i^{k+1} - \phi_i^{k}$ for $i \in B_k$

    \State $\xi_{k+1} = \xi_k + \sum_{i\in B_k} |\langle y_i, s_i\rangle|$
    \State $\delta_{k+1} = \delta_k + \sum_{i\in B_k} \|y_i\|_*^2$

    \State $\mu_{k+1} = \mu_k + \frac{1}{n}
        \sum_{i\in B_k}
        \left(\nabla f_i(\phi_i^{k+1})-\nabla f_i(\phi_i^{k})\right)$

\EndFor

\State \textbf{return} $x_K$

\end{algorithmic}
\end{algorithm}

\subsection{Efficient implementation}
The general SAGA table stores one $d$-dimensional gradient per component and therefore costs $O(nd)$ memory. Linear-prediction losses have the form $f_i(x)=\Psi_i(\langle a_i,x\rangle)$, where $a_i\in\mathbb{R}^d$ is a data vector and $\Psi_i:\mathbb{R}\to\mathbb{R}$ is a scalar loss \citep{zhou2019direct}. In this case, storing only $\langle a_i,\phi_i^k\rangle$ recovers both the gradient-table correction and the secant sums in \eqref{stableBB}. The implementation in Appendix A.2 consequently reduces the table memory to $O(n)$ without changing the Ada-BPSG update.

This reduction follows from the factorization
\[
    \nabla f_i(\phi_i^k)
    =\Psi_i'(\langle a_i,\phi_i^k\rangle)a_i,
\]
where $\Psi_i'$ denotes the derivative of the scalar loss. When component $i$ is refreshed, its old and new scalar predictions determine both the SAGA gradient correction and its contributions to $\xi_k$ and $\delta_k$; the feature norm $\|a_i\|_*$ can be precomputed. Ada-BPSG therefore needs neither a separate curvature table nor a history of past iterates.

The same reuse preserves the computational profile of the underlying BPSG-SAGA iteration. For a mini-batch of size $b$, step-size adaptation adds two scalar accumulations per refreshed component and a constant-cost safeguard whenever the step size is updated. It introduces no extra component-gradient evaluations or proximal subproblems. This is the implementation counterpart of the method's central design: the stochastic correction and the curvature estimate use the same local information, while the safeguard converts that information into the stable step-size sequence required by the analysis.

\section{Convergence Analysis}
\label{s:theory}
The analysis follows the same chain as the algorithm. Relative smoothness converts the Bregman proximal update into descent, component smoothness controls the SAGA estimation error, and the safeguard keeps the adaptive step within the range that balances these two terms. We first define the geometry and the step-size invariant, then establish variance bounds for the convex and nonconvex regimes, and finally derive the convergence rates.

\subsection{Preliminaries} \label{prelimi}
\paragraph{Normed-space notation.}
Let $\mathbf{E}$ be a finite-dimensional real normed vector space with norm $\|\cdot\|$, and let $\mathbf{E}^*$ be its dual space, the space of continuous linear functionals on $\mathbf{E}$. For $y\in\mathbf{E}^*$ and $x\in\mathbf{E}$, $\langle y,x\rangle$ denotes their duality pairing, and
\[
    \|y\|_*:=\max\{\langle y,x\rangle:\ \|x\|\leq 1\}.
\]
This expression defines the dual norm $\|\cdot\|_*$. The filtration $\mathcal{F}_k:=\sigma(B_0,\ldots,B_{k-1})$ contains the mini-batches sampled before iteration $k$, and $\mathbb{E}_k[\cdot]:=\mathbb{E}[\cdot\mid\mathcal{F}_k]$ denotes conditional expectation given this history.

Since $\mathbf{E}$ is finite-dimensional, it is automatically complete, hence Banach; closed bounded subsets are compact, all norms are equivalent, and $\mathbf{E}^*$ has the same dimension. Every existence and boundedness fact used below---solvability of the proximal subproblem and existence of $x^*\in D$---follows from $\dim\mathbf{E}<\infty$ together with the strong convexity of $\psi$, so no completeness or sequential-compactness assumption beyond finite-dimensionality is invoked. In fact the convergence results are non-asymptotic, finite-horizon bounds (Theorems \ref{theo1}--\ref{theo3}) and use no limit-point or subsequence argument at all. The generality relative to the Euclidean setting is therefore geometric---the norm need not derive from an inner product, so no polarization identity or norm decomposition is available---rather than topological.

\paragraph{Bregman geometry.}
A distance-generating function (DGF), or Bregman kernel, is a continuously differentiable function $\psi:\mathbf{E}\to\mathbb{R}$ that is 1-strongly convex with respect to $\|\cdot\|$:
\[
    \psi(x)-\psi(y)-\langle \nabla\psi(y),x-y\rangle\geq \frac{1}{2}\|x-y\|^2.
\]
Its Bregman divergence is the first-order linearization error
\[
    D_{\psi}(x,y):=\psi(x)-\psi(y)-\langle \nabla\psi(y),x-y\rangle.
\]
Strong convexity gives $D_\psi(x,y)\geq\frac12\|x-y\|^2$, which connects Bregman descent to the norm differences used in the SAGA variance bounds. Given a direction $\mathcal G\in\mathbf{E}^*$ and step size $\eta>0$, define the Bregman proximal mapping by
\[
\operatorname{Prox}^{\psi}_{h,\eta}(x,\mathcal{G})
:=\arg\min_{u\in\mathbf{E}}
\left\{\langle \mathcal{G},u\rangle+\frac{1}{\eta}D_{\psi}(u,x)+h(u)\right\}
\]
whenever the minimizer exists. Assumption \ref{assump} ensures that this mapping is single-valued along the algorithmic trajectory.

\begin{definition}[Smooth adaptability \citep{doi:10.1137/16M1099546,bolte2018first,wang2024bregman}]
    Let $\mathcal{X}\subset\mathbf{E}$ be nonempty, convex, and open, and let $f:\mathcal{X}\to(-\infty,+\infty]$ be proper, lower semicontinuous, and continuously differentiable on $\mathcal{X}$. Given a DGF $\psi$, the pair $(f,\psi)$ is $(\bar L,\ubar L)$-smooth adaptable on $\mathcal{X}$ if constants $\bar L>0$ and $\ubar L\geq0$ satisfy, for every $x,y\in\mathcal{X}$,
    \[
        -\ubar{L}D_{\psi}(x,y)
        \leq f(x)-f(y)-\langle\nabla f(y),x-y\rangle
        \leq \bar{L}D_{\psi}(x,y).
    \]
\end{definition}
The upper inequality is the relative-smoothness bound used for descent; the lower inequality controls negative curvature. The convergence proofs below use the upper constant $\bar L$ on a deterministic region $D\subset\mathcal X$.

\begin{example}[A genuinely non-Euclidean instance]\label{ex:entropy}
Let $\mathbf E=\mathbb R^d$ carry the norm $\|\cdot\|=\|\cdot\|_1$, so that the dual norm is $\|\cdot\|_*=\|\cdot\|_\infty$, and let $h=\iota_{\Delta}$ be the indicator of the probability simplex $\Delta=\{x\geq0:\sum_i x_i=1\}$. The negative-entropy kernel $\psi(x)=\sum_i x_i\log x_i$ is $1$-strongly convex with respect to $\|\cdot\|_1$ by Pinsker's inequality, and its divergence is the Kullback--Leibler divergence $D_\psi(x,y)=\sum_i x_i\log(x_i/y_i)$, which is asymmetric and admits no polarization identity. For a finite sum $f=\frac1n\sum_i f_i$ with bounded Hessians on $\Delta$, the pair $(f,\psi)$ is relatively smooth; the SAGA correction $\widetilde\nabla_k$ and the accumulators $\xi_k=\sum|\langle y_{i,t},s_{i,t}\rangle|$ and $\delta_k=\sum\|y_{i,t}\|_\infty^2$ are then measured in $\ell_\infty$, and the proximal step is the KL projection onto $\Delta$, i.e.\ a softmax renormalization. Every constant in Theorems \ref{theo1}--\ref{theo3} is expressed through the $\ell_1/\ell_\infty$ pairing, with $\sigma_b\geq1$ typically strict; the Euclidean specialization $\psi=\frac12\|\cdot\|_2^2$ used in Section \ref{tests} is the degenerate case $\|\cdot\|=\|\cdot\|_*=\|\cdot\|_2$, $\sigma_b=1$. Standard Euclidean-underlying stochastic Bregman analyses, which measure gradients and variance in $\ell_2$, do not cover this instance directly.
\end{example}

\subsection{Safeguarded step-size framework}
The candidate $\widehat{\eta}^{\mathrm{BB}}_k$, its clipped value $q_k$, and the monotone update $\eta_k=\max\{\eta_{k-1},q_k\}$ are defined in \eqref{stableBB}--\eqref{clippedBB}. Because they use only records collected before $B_k$ is sampled, $\eta_k$ is $\mathcal F_k$-measurable. The update maintains the deterministic invariant
\[
    \eta_{\min}\leq \eta_k\leq \eta_{\max},
    \qquad
    \eta_k\geq\eta_{k-1}
    \quad \text{for all } k.
\]
The lower bound sets the scale in the convex rate, the upper bound absorbs relative smoothness and gradient-estimation error, and monotonicity telescopes the Bregman distance in the convex proof. The nonconvex argument uses only the upper bound.

\begin{remark}[Cost of monotonicity]\label{rem:monotone}
The nondecreasing rule $\eta_k=\max\{\eta_{k-1},q_k\}$ is precisely what makes $a_k=1/\eta_k$ nonincreasing and thus lets the Bregman distance telescope in Theorem \ref{theo1}. Its price is that the step may rise but never fall: starting from $\eta_{\min}$, it ramps toward the clipped curvature estimate and then holds. Overshoot is controlled not by retreat but by the fixed cap $\eta_{\max}$, so the adaptation is confined to how fast and how far the step climbs within $[\eta_{\min},\eta_{\max}]$. The nonconvex analysis (Theorem \ref{theo3}) does not use monotonicity and relies only on the upper bound, so there the step is free to vary in both directions inside the interval.
\end{remark}

\subsection{Assumptions}
\begin{assumption} \label{assump}
    Items 1--5 apply to all results; Item 6 is used only in the convex analysis.
    \begin{enumerate}
        \item \textbf{Geometry and prox.} The function $\psi$ is continuously differentiable and 1-strongly convex with respect to $\|\cdot\|$. The function $h:\mathbf{E}\to(-\infty,+\infty]$ is proper, closed, and convex. For every $x\in D$, $\mathcal{G}\in\mathbf{E}^*$, and $\eta\in[\eta_{\min},\eta_{\max}]$, the Bregman proximal subproblem has a unique solution.
        \item \textbf{Deterministic bounded region.} There exists a deterministic bounded convex set $D\subset\mathcal{X}$ such that $x_k,\phi_i^k\in D$ almost surely for all $k$ and $i$. In the convex results, an optimal solution $x^*$ also belongs to $D$.
        \item \textbf{Sampling.} Conditional on $\mathcal{F}_k$, the mini-batch is sampled uniformly from $[n]$ with size $b$ and without replacement. Write $I_1,\ldots,I_b$ for the sampled indices. A deterministic mini-batch second-moment constant $\sigma_b\geq 1$ satisfies, for any deterministic $v_1,\ldots,v_n\in\mathbf{E}^*$ with $\bar v=\frac1n\sum_i v_i$,
        \[
            \mathbb{E}\left[
            \left\|
            \frac1b\sum_{j=1}^b (v_{I_j}-\bar v)
            \right\|_*^2
            \right]
            \leq
            \frac{\sigma_b}{bn}\sum_{i=1}^n\|v_i-\bar v\|_*^2.
        \]
        In the Euclidean norm, one may take $\sigma_b=1$. Conditional on $\mathcal F_k$, the table vectors are fixed, so this inequality applies directly to the SAGA estimator.
        \item \textbf{Relative smoothness.} The pair $(f,\psi)$ is $(\bar L,\ubar L)$-smooth adaptable on $D$; only the upper bound with $\bar L$ is used in the convergence proofs. This is a relative smoothness condition for the aggregate differentiable term $f$ with respect to the chosen kernel $\psi$.
        \item \textbf{Component local smoothness.} Each $\nabla f_i$ is Lipschitz on $D$ with deterministic constant $M$, i.e.,
        \[
            \|\nabla f_i(x)-\nabla f_i(y)\|_*\leq M\|x-y\|,
            \qquad x,y\in D,\ i\in[n].
        \]
        \item \textbf{Convex self-bounding condition.} For the convex results, each $f_i$ is convex on $D$ and satisfies
        \[
            \|\nabla f_i(x)-\nabla f_i(y)\|_*^2
            \leq
            2M\left(f_i(x)-f_i(y)-\langle\nabla f_i(y),x-y\rangle\right),
            \qquad x,y\in D.
        \]
    \end{enumerate}
\end{assumption}
\paragraph{Role of the assumptions.}
The deterministic set $D$ localizes the component smoothness needed for variance reduction. A concrete verifiable sufficient condition is that $\operatorname{dom}h$ is compact and the kernel $\psi$ is continuous on $\operatorname{dom}h$: the Bregman proximal subproblem then minimizes a lower-semicontinuous function over a compact feasible set, so every iterate $x_k$ and every table point $\phi_i^k$ stays in $D:=\operatorname{dom}h$ almost surely, and $M$ is the finite Lipschitz constant of the component gradients on that compact set. This makes Item 2 an assumption on the problem data rather than on the trajectory. The sparse-NMF experiment of Section \ref{tests} does not satisfy this out of the box, because the objective is invariant under the rescaling $(U,V)\mapsto(tU,t^{-1}V)$ and its sublevel sets are therefore unbounded; appending an explicit box $0\le U,V\le \mathcal U$ to the cardinality and nonnegativity constraints restores a compact $D$ while leaving the optimal value unchanged for $\mathcal U$ large enough, at the cost of enforcing the box in the proximal step. Fixing $D$ before the run keeps $M$ deterministic. This local component condition serves a different purpose from relative smoothness: $\bar L$ controls descent of the aggregate $f$ in Bregman geometry, whereas $M$ controls differences between stored component gradients.

For convex components, Item 6 converts those gradient differences into the linearization errors used by the objective-gap Lyapunov function. Convex Euclidean $M$-smooth functions satisfy this self-bounding inequality automatically. The nonconvex analysis instead tracks distances between the current iterate and the SAGA table points, and therefore uses only Items 1--5.

\subsection{Variance bounds}
For the convex analysis, let $x^*$ be an optimal solution in $D$ and choose $\rho^*\in\partial h(x^*)$ satisfying the first-order condition
\[
    \nabla f(x^*)+\rho^*=0.
\]
Define
\[
    \Delta_i(x):=f_i(x)-f_i(x^*)-\langle\nabla f_i(x^*),x-x^*\rangle,
    \qquad
    H(x):=h(x)-h(x^*)-\langle\rho^*,x-x^*\rangle.
\]
Here $\Delta_i(x)$ is the linearization error of component $f_i$, $H(x)$ is the corresponding error of $h$, and $\Delta(x):=\frac1n\sum_i\Delta_i(x)$. Convexity makes all three quantities nonnegative; the optimality condition also gives $F(x)-F(x^*)=\Delta(x)+H(x)$. This identity links the SAGA variance to the objective gap.

\begin{lemma}[Convex SAGA variance] \label{VB}
    Under Assumption \ref{assump}, the SAGA estimator in Algorithm \ref{alg:stc_schm} is conditionally unbiased,
    \[
        \mathbb{E}_k[\widetilde{\nabla}_k]=\nabla f(x_k)
    \]
    and its conditional variance in the convex setting satisfies
    \[
        \mathbb{E}_k\left[
        \|\widetilde{\nabla}_k-\nabla f(x_k)\|_*^2
        \right]
        \leq
        \frac{16\sigma_bM}{b}
        \left[
        \Delta(x_k)+\frac1n\sum_{i=1}^n\Delta_i(\phi_i^k)
        \right].
    \]
\end{lemma}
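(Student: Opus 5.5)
Unbiasedness is immediate once we write the estimator in table form. Set $v_i:=\nabla f_i(x_k)-\nabla f_i(\phi_i^k)$; these are $\mathcal F_k$-measurable, hence fixed under $\mathbb E_k$. Since $\phi_i^{k+1}=x_k$ for $i\in B_k$ and $\mu_k=\frac1n\sum_i\nabla f_i(\phi_i^k)$ (by induction on the $\mu$ update), we get
\[
\widetilde\nabla_k=\frac1b\sum_{i\in B_k}v_i+\mu_k,
\qquad
\widetilde\nabla_k-\nabla f(x_k)=\frac1b\sum_{j=1}^b\bigl(v_{I_j}-\bar v\bigr),
\]
because $\bar v=\nabla f(x_k)-\mu_k$. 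Uniform sampling gives $\mathbb E_k[v_{I_j}]=\bar v$, which proves $\mathbb E_k[\widetilde\nabla_k]=\nabla f(x_k)$.

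For the variance, we apply the sampling inequality of Assumption \ref{assump}, Item 3, conditionally on $\mathcal F_k$. This gives $\mathbb E_k\|\widetilde\nabla_k-\nabla f(x_k)\|_*^2\le \frac{\sigma_b}{bn}\sum_i\|v_i-\bar v\|_*^2$. In a general normed space we cannot use the identity "variance $\le$ second moment". Instead we use the triangle inequality and $(a+b)^2\le 2a^2+2b^2$, giving $\|v_i-\bar v\|_*^2\le 2\|v_i\|_*^2+2\|\bar v\|_*^2$. Convexity of $\|\cdot\|_*^2$ (Jensen) gives $\|\bar v\|_*^2\le\frac1n\sum_i\|v_i\|_*^2$. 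Together,
\[
\frac1n\sum_i\|v_i-\bar v\|_*^2\le \frac4n\sum_i\|v_i\|_*^2 .
\]
This costs a factor $4$ where the Euclidean argument would cost $1$.

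Next we split each $v_i$ through the optimum:
\[
v_i=\bigl(\nabla f_i(x_k)-\nabla f_i(x^*)\bigr)-\bigl(\nabla f_i(\phi_i^k)-\nabla f_i(x^*)\bigr),
\]
so that $\|v_i\|_*^2\le 2\|\nabla f_i(x_k)-\nabla f_i(x^*)\|_*^2+2\|\nabla f_i(\phi_i^k)-\nabla f_i(x^*)\|_*^2$. The self-bounding condition, Item 6 with $y=x^*\in D$ (valid since $x_k,\phi_i^k,x^*\in D$ by Item 2), gives $\|\nabla f_i(x)-\nabla f_i(x^*)\|_*^2\le 2M\Delta_i(x)$. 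Hence $\|v_i\|_*^2\le 4M\bigl(\Delta_i(x_k)+\Delta_i(\phi_i^k)\bigr)$. Averaging over $i$ and combining the factors,
\[
\frac{\sigma_b}{b}\cdot 4\cdot 4M\Bigl[\Delta(x_k)+\tfrac1n\sum_i\Delta_i(\phi_i^k)\Bigr]
=\frac{16\sigma_b M}{b}\Bigl[\Delta(x_k)+\tfrac1n\sum_i\Delta_i(\phi_i^k)\Bigr],
\]
which matches the stated constant exactly.

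The main point needing care is the centering step. Without polarization, subtracting the mean $\bar v$ is not free, and one must check that the factor $4$ it produces (via Jensen on the dual norm squared) is what the constant $16$ accounts for. The other point to confirm is that the table vectors are genuinely $\mathcal F_k$-measurable, so that Item 3 applies with deterministic $v_i$ under conditioning. This holds because $\phi_i^k$ depends only on $B_0,\dots,B_{k-1}$.
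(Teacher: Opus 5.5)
Your proposal is correct and follows the paper's proof essentially step for step. The paper uses the same steps in the same order: the table form $\widetilde{\nabla}_k=\frac1b\sum_{i\in B_k}a_i^k+\mu_k$, the sampling bound of Item 3 applied conditionally on $\mathcal F_k$, the factor $4$ from centering, and the self-bounding condition at $y=x^*$ giving $\|a_i^k\|_*^2\le 4M\bigl(\Delta_i(x_k)+\Delta_i(\phi_i^k)\bigr)$. The paper states the centering factor $4$ without justification, and your triangle-inequality-plus-Jensen argument is the justification it implicitly relies on.
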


\subsection{Convex analysis}
Lemma \ref{VB} makes the stochastic error proportional to the current and stored objective gaps. Combining this estimate with the three-point Bregman proximal inequality yields a Lyapunov decrease in which the safeguard absorbs the variance term.
\begin{theorem} \label{theo1}
    Suppose Assumption \ref{assump} holds in the convex setting, and the safeguarded step-sizes satisfy
    \[
        \eta_{\min}\leq \eta_k\leq\eta_{\max},\qquad
        \eta_k\geq \eta_{k-1},\qquad
        \eta_{\max}\leq \frac{1}{\bar L+32\sigma_bM/b}.
    \]
    Let $\bar{x}_{K}=\frac1K\sum_{k=1}^{K}x_k$. Then
    \[
        \mathbb{E}[F(\bar{x}_{K})]-F(x^*)
        \leq
        \frac{2}{K}
        \left[
            \left(\frac{n}{4b}+\frac12\right)(F(x_0)-F(x^*))
            +\frac{1}{\eta_{\min}}D_\psi(x^*,x_0)
        \right].
    \]
\end{theorem}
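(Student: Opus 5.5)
We follow the standard SAGA Lyapunov argument, adapted to Bregman geometry. The only tools are the three-point inequality, the dual-norm Young inequality, and Lemma \ref{VB}. The Lyapunov function is
\[
T_k:=\frac{1}{\eta_{k}}D_\psi(x^*,x_k)+\frac{c}{n}\sum_{i=1}^n\Delta_i(\phi_i^k),
\]
with a constant $c$ to be fixed. The target is a one-step bound of the form
\[
\mathbb E_k[F(x_{k+1})-F(x^*)]\cdot\tfrac12 \le T_k-\mathbb E_k[T_{k+1}]+\text{(telescoping slack)}.
\]
Summing this over $k=0,\dots,K-1$ and applying Jensen's inequality to $\bar x_K$ will give the claimed bound.

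\textbf{Step 1 (descent).} From the optimality of $x_{k+1}$ in the prox step and the three-point inequality, we get, for $u=x^*$,
\[
h(x_{k+1})+\langle\widetilde\nabla_k,x_{k+1}-x^*\rangle\le h(x^*)+\tfrac1{\eta_k}\bigl[D_\psi(x^*,x_k)-D_\psi(x^*,x_{k+1})-D_\psi(x_{k+1},x_k)\bigr].
\]
Relative smoothness gives $f(x_{k+1})\le f(x_k)+\langle\nabla f(x_k),x_{k+1}-x_k\rangle+\bar L D_\psi(x_{k+1},x_k)$, and convexity gives $f(x_k)+\langle\nabla f(x_k),x^*-x_k\rangle\le f(x^*)$. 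Combining the three leaves the error term $\langle \nabla f(x_k)-\widetilde\nabla_k,x_{k+1}-x^*\rangle$. We split it at the $\mathcal F_k$-measurable point $x_k$, which is legitimate because $\eta_k$ is $\mathcal F_k$-measurable. The piece through $x_k-x^*$ has zero conditional mean by unbiasedness. The piece through $x_{k+1}-x_k$ is bounded by the dual-norm Young inequality:
\[
\|e_k\|_*\|x_{k+1}-x_k\|\le \tfrac{\gamma}{2}\|e_k\|_*^2+\tfrac1{2\gamma}\|x_{k+1}-x_k\|^2 .
\]
The choice $1/\gamma=1/\eta_k-\bar L$ lets strong convexity, $D_\psi\ge\frac12\|\cdot\|^2$, absorb the norm term. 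The step cap makes $\gamma\le \eta_k/(1-\eta_k\bar L)\le 2\eta_k$, and in fact $\gamma\le b/(32\sigma_bM)$ times a constant. Lemma \ref{VB} then bounds the variance term by $\tfrac{\gamma\cdot 16\sigma_bM}{2b}[\Delta(x_k)+\frac1n\sum_i\Delta_i(\phi_i^k)]\le \tfrac14[\Delta(x_k)+\frac1n\sum\Delta_i(\phi_i^k)]$, roughly. At no point do we use an inner product; only the pairing and $\|\cdot\|_*$ appear.

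\textbf{Step 2 (table recursion and monotone steps).} Since each index is refreshed with probability $b/n$,
\[
\mathbb E_k\Bigl[\tfrac1n\sum_i\Delta_i(\phi_i^{k+1})\Bigr]=\bigl(1-\tfrac bn\bigr)\tfrac1n\sum_i\Delta_i(\phi_i^k)+\tfrac bn\Delta(x_k).
\]
Choosing $c=n/(2b)$ or so cancels the $\frac14\frac1n\sum\Delta_i(\phi_i^k)$ variance contribution. The leftover $\Delta(x_k)$ terms, being at most $\frac12\Delta(x_k)+\frac14\Delta(x_k)$, are then charged against $F(x_k)-F(x^*)=\Delta(x_k)+H(x_k)$. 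This is why we index the gap by $x_k$ and shift the sum, which produces $F(x_0)-F(x^*)$ terms with coefficients $\frac{n}{4b}+\frac12$. For the distance terms, we multiply through by $\eta_k$ and divide again, or equivalently use $1/\eta_{k+1}\le1/\eta_k$. This gives $\frac1{\eta_k}D_\psi(x^*,x_{k+1})\ge\frac1{\eta_{k+1}}D_\psi(x^*,x_{k+1})$, so the distance terms telescope to at most $\frac1{\eta_0}D_\psi(x^*,x_0)\le\frac1{\eta_{\min}}D_\psi(x^*,x_0)$. The monotone update is used exactly here.

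\textbf{Step 3 (sum and average).} Taking total expectations, summing, dropping the nonnegative terminal $T_K$, and using $\Delta_i(\phi_i^0)=\Delta_i(x_0)$ gives
\[
\frac12\sum_{k=1}^K\mathbb E[F(x_k)-F(x^*)]\le\bigl(\tfrac n{4b}+\tfrac12\bigr)(F(x_0)-F(x^*))+\tfrac1{\eta_{\min}}D_\psi(x^*,x_0).
\]
Convexity of $F$ then finishes the proof. The main obstacle is the constant bookkeeping in Steps 1 and 2. The Young parameter, the weight $c$, and the factor $32$ in the step cap must be chosen jointly so that every $\Delta(x_k)$ and $\frac1n\sum\Delta_i(\phi_i^k)$ term closes with coefficient at least $\frac12$ on the gap. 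The factor $2$ in the cap relative to Lemma \ref{VB}'s $16$ is what absorbs this. I would first fix $c$ from the $\phi$-cancellation, then verify the gap coefficient; if it is short, I would retain part of $-\frac1{\eta_k}D_\psi(x_{k+1},x_k)$.
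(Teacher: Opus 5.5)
Your proposal is correct and follows essentially the paper's proof: the three-point inequality at $u=x^*$ split at $x_k$, a dual-norm Young step with parameter at most $b/(32\sigma_bM)$ so that Lemma~\ref{VB} contributes $\frac14[\Delta(x_k)+\frac1n\sum_i\Delta_i(\phi_i^k)]$, the SAGA table recursion, telescoping of $\frac1{\eta_k}D_\psi(x^*,x_k)$ via monotonicity, and Jensen. Tracking only $\Delta_i$ in the table term, rather than the paper's $\Delta_i+H$, is harmless.

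The one correction is in the bookkeeping. You must take $c=n/(4b)$, the smallest value with $\frac14+c(1-\frac bn)\leq c$. That leaves $\frac14+\frac14=\frac12$ on $\Delta(x_k)$ and reproduces the stated constants. Your guess $c=n/(2b)$ leaves $\frac34\Delta(x_k)$, so only $\frac14$ of the gap survives, and the bracket weakens to $(\frac nb+\frac32)(F(x_0)-F(x^*))+\frac{2}{\eta_{\min}}D_\psi(x^*,x_0)$.
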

Theorem \ref{theo1} therefore gives an $O(n/K)$ ergodic rate for fixed $b$. Its dependence on $\eta_{\min}$ records the cost of the lower safeguard, while the admissible $\eta_{\max}$ couples relative smoothness and SAGA variance. The BB candidate remains free to adapt anywhere inside this interval.

\begin{remark}[What the guarantee does and does not claim]\label{rem:guarantee}
The admissible cap $\eta_{\max}\le(\bar L+32\sigma_bM/b)^{-1}$ encodes exactly the smoothness constants $\bar L$ and $M$ that a well-tuned fixed step would use. Consequently Theorem \ref{theo1} does not improve on the $O(n/K)$ rate of fixed-step BPSG-SAGA; it certifies that adapting within $[\eta_{\min},\eta_{\max}]$ does not degrade that rate. Ada-BPSG is thus adaptive within a safe interval whose upper end still depends on problem constants, not free of them; the empirical payoff (Section \ref{tests}) is robustness to the \emph{initial} step size rather than a faster worst-case rate. The leading constant is moreover governed by $\eta_{\min}^{-1}D_\psi(x^*,x_0)$, so a conservative $\eta_{\min}$ enlarges the bound: the BB candidate removes the need to guess \emph{where} in $[\eta_{\min},\eta_{\max}]$ the step should sit, but not the need to set the interval from $\bar L$ and $M$. The analysis is nonetheless conducted entirely in the general normed space $\mathbf{E}$: it uses only the dual norm, dual-norm Young inequalities, and the three-point identity of Lemma \ref{lem:prox-three-point}, and at no point invokes the Euclidean polarization identity $2\langle a,b\rangle=\|a\|^2+\|b\|^2-\|a-b\|^2$ or the norm decomposability on which the Euclidean adaptive-BB analyses rely. The price of dropping them is explicit rather than hidden: the mini-batch second-moment constant obeys $\sigma_b\geq1$ with equality only in the Euclidean norm (Assumption \ref{assump}, Item 3), and the variance bound of Lemma \ref{VB} carries a factor $4$ where the Hilbert variance identity $\sum_i\|a_i-\bar a\|^2=\sum_i\|a_i\|^2-n\|\bar a\|^2$ would give $1$. Recovering the Euclidean $O(n/K)$ order in this non-Hilbert geometry, rather than improving it, is where the difficulty lies, so the contribution is added generality at no loss of rate.
\end{remark}

Relative quadratic growth means that the objective gap controls the Bregman distance to a solution. Under this condition, restarting the averaged convex guarantee turns a constant-factor reduction per round into geometric convergence.
\begin{theorem} \label{theo2}
    In addition to the assumptions of Theorem \ref{theo1}, suppose $F$ satisfies the relative quadratic growth condition
    \[
        F(x)-F(x^*)\geq \mu D_\psi(x^*,x),
        \qquad x\in D.
    \]
    Let
    \[
        A_b:=\frac{n}{4b}+\frac12.
    \]
    If each restart round reinitializes the SAGA table at $x^{(t)}$, starts or continues with a step-size in $[\eta_{\min},\eta_{\max}]$ satisfying the monotonicity condition within the round, and runs Theorem \ref{theo1} for
    \[
        K\geq 4\left(A_b+\frac{1}{\mu\eta_{\min}}\right)
    \]
    iterations and sets $x^{(t+1)}$ to the averaged iterate of round $t$, then after $\tau$ restart rounds,
    \[
        \mathbb{E}[F(x^{(\tau)})]-F(x^*)
        \leq
        2^{-\tau}\left(F(x^{(0)})-F(x^*)\right).
    \]
\end{theorem}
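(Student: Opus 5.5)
The plan is to apply Theorem \ref{theo1} once per restart round and turn its additive bound into a contraction by using relative quadratic growth. Fix a round $t$. The SAGA table is reinitialized at $x^{(t)}$, so every $\phi_i^0=x_0=x^{(t)}$. Within the round the step size stays in $[\eta_{\min},\eta_{\max}]$ and is nondecreasing, so all hypotheses of Theorem \ref{theo1} hold for that round. We condition on $\mathcal F$ at the start of round $t$, which makes $x^{(t)}$ deterministic. Theorem \ref{theo1} then gives
\[
\mathbb E\bigl[F(x^{(t+1)})\,\big|\,x^{(t)}\bigr]-F(x^*)\leq \frac{2}{K}\Bigl[A_b\bigl(F(x^{(t)})-F(x^*)\bigr)+\frac{1}{\eta_{\min}}D_\psi(x^*,x^{(t)})\Bigr].
\]
We should check that Theorem \ref{theo1} is genuinely a conditional statement given its initial point. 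Its proof only uses Lemma \ref{VB}, the tower property, and the fact that $x_0$ and the table are fixed at the start, so this holds. We also need $x^{(t)}\in D$. This follows because $x^{(t)}$ is an average of iterates lying in the convex set $D$ (Assumption \ref{assump}, Item 2), or because $x^{(0)}\in D$ when $t=0$.

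Next we remove the Bregman term. Since $x^{(t)}\in D$, relative quadratic growth gives $D_\psi(x^*,x^{(t)})\leq \mu^{-1}(F(x^{(t)})-F(x^*))$. Substituting this into the display yields
\[
\mathbb E\bigl[F(x^{(t+1)})\,\big|\,x^{(t)}\bigr]-F(x^*)\leq \frac{2}{K}\Bigl(A_b+\frac{1}{\mu\eta_{\min}}\Bigr)\bigl(F(x^{(t)})-F(x^*)\bigr).
\]
The iteration requirement $K\geq 4(A_b+1/(\mu\eta_{\min}))$ makes the prefactor at most $1/2$.

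Finally, we take total expectation with the tower property and induct on $t$. This gives $\mathbb E[F(x^{(\tau)})]-F(x^*)\leq 2^{-\tau}(F(x^{(0)})-F(x^*))$.

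The only delicate point is making the per-round application of Theorem \ref{theo1} rigorous. First, the random restart point must be treated as a deterministic initialization after conditioning. Second, the step-size schedule inside each round must be predictable with respect to the round's own filtration, which holds because the BB accumulators use only past records. Third, because the step sequence may carry over between rounds, monotonicity only needs to hold within a round, and the bound $1/\eta_0\leq 1/\eta_{\min}$ is what Theorem \ref{theo1} actually uses. Everything else is algebra.
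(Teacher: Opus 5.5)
Your proposal is correct and follows the paper's own route. You apply Theorem \ref{theo1} conditionally on the history at the start of each round, with the table reset at $x^{(t)}$; you absorb $D_\psi(x^*,x^{(t)})$ via relative quadratic growth; the lower bound on $K$ gives the factor $1/2$; and you induct after taking total expectation. Your extra checks ($x^{(t)}\in D$ by convexity of $D$, and predictability of a carried-over step size) are details the paper leaves implicit.
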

The same condition converts the objective contraction into a Bregman distance bound:
\[
    \mathbb{E}\bigl[D_\psi(x^*,x^{(\tau)})\bigr]
    \leq
    \frac{2^{-\tau}}{\mu}
    \left(F(x^{(0)})-F(x^*)\right).
\]
If the kernel is 1-strongly convex, this further bounds the squared norm distance by
$\mathbb{E}\|x^{(\tau)}-x^*\|^2\leq 2\mathbb{E}[D_\psi(x^*,x^{(\tau)})]$ whenever the displayed Bregman distance is evaluated on $D$.

\subsection{Nonconvex analysis}
We next allow $f$ to be nonconvex and assume that $F$ has a finite lower bound $F_{\inf}$. Because an objective gap alone does not characterize stationarity, we use a Bregman proximal gradient mapping.
\begin{definition}[Bregman proximal gradient mapping]
For $\eta>0$, define
\[
    \mathcal{G}_{\eta}(x)
    =
    \frac{x-\operatorname{Prox}^{\psi}_{h,\eta}(x,\nabla f(x))}{\eta}.
\]
\end{definition}
Let
\[
    \widetilde{x}_{k+1}
    =
    \operatorname{Prox}^{\psi}_{h,\eta_k}(x_k,\nabla f(x_k)).
\]
The full-gradient proximal point $\widetilde{x}_{k+1}$ removes the SAGA estimation error from the stationarity measure. We normalize its displacement by the fixed upper safeguard:
\[
    \mathcal{R}_k:=\frac{x_k-\widetilde{x}_{k+1}}{\eta_{\max}}.
\]
This normalization yields a common scale across adaptive iterations. The residual $\mathcal R_k$ vanishes exactly when the full-gradient Bregman proximal step leaves $x_k$ unchanged.

\begin{lemma}[Nonconvex SAGA variance] \label{nonconvexVariance}
    Under Items 1--5 of Assumption \ref{assump},
    \[
        \mathbb{E}_k\left[
        \|\widetilde{\nabla}_k-\nabla f(x_k)\|_*^2
        \right]
        \leq
        \frac{4\sigma_bM^2}{bn}
        \sum_{i=1}^{n}\|x_k-\phi_i^k\|^2.
    \]
\end{lemma}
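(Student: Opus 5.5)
The plan is to write the estimation error as a centered mini-batch average of deterministic (conditionally on $\mathcal F_k$) vectors and apply the sampling inequality of Assumption \ref{assump}, Item 3.

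Conditional on $\mathcal F_k$, the iterate $x_k$ and the table points $\phi_i^k$ are fixed. For $i\in B_k$ the algorithm sets $\phi_i^{k+1}=x_k$, and $\mu_k=\frac1n\sum_i\nabla f_i(\phi_i^k)$ by induction on the $\mu$-update. Define
\[
v_i:=\nabla f_i(x_k)-\nabla f_i(\phi_i^k)\in\mathbf E^*,\qquad \bar v=\frac1n\sum_{i=1}^n v_i=\nabla f(x_k)-\mu_k .
\]
Then
\[
\widetilde\nabla_k-\nabla f(x_k)=\frac1b\sum_{j=1}^b v_{I_j}+\mu_k-\nabla f(x_k)=\frac1b\sum_{j=1}^b\bigl(v_{I_j}-\bar v\bigr).
\]
This identity also gives unbiasedness, since each $I_j$ is marginally uniform. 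Item 3 applies because the $v_i$ are $\mathcal F_k$-measurable, and it yields
\[
\mathbb E_k\|\widetilde\nabla_k-\nabla f(x_k)\|_*^2\le\frac{\sigma_b}{bn}\sum_{i=1}^n\|v_i-\bar v\|_*^2 .
\]

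Next I would remove the centering without a Hilbert variance identity. By the triangle inequality and $(a+c)^2\le 2a^2+2c^2$,
\[
\|v_i-\bar v\|_*^2\le 2\|v_i\|_*^2+2\|\bar v\|_*^2 .
\]
Convexity of the dual norm with Jensen gives $\|\bar v\|_*^2\le\frac1n\sum_i\|v_i\|_*^2$. Summing over $i$ then gives
\[
\sum_i\|v_i-\bar v\|_*^2\le 4\sum_i\|v_i\|_*^2 .
\]
This is the factor $4$ flagged in Remark \ref{rem:guarantee}.

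Finally, Item 5 applies because $x_k,\phi_i^k\in D$ almost surely by Item 2, so
\[
\|v_i\|_*\le M\|x_k-\phi_i^k\|.
\]
Combining the three displays gives the stated bound $\frac{4\sigma_bM^2}{bn}\sum_i\|x_k-\phi_i^k\|^2$.

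The main obstacle is the second step, since no polarization or variance decomposition is available in a general dual norm. The crude $2a^2+2c^2$ plus Jensen route suffices and explains the constant $4$. A secondary point is checking that $\mu_k$ equals the table average, so that the estimator is exactly the centered average. The batch is uniform without replacement, so only Item 3 is needed and no independence across $j$.
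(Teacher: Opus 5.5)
Your proposal is correct and follows the paper's proof step for step. Both write $\widetilde\nabla_k-\nabla f(x_k)=\frac1b\sum_{i\in B_k}(a_i^k-\bar a^k)$, apply the sampling bound of Item 3, use the crude $2a^2+2c^2$ plus Jensen step to get the factor $4$, and finish with component Lipschitz continuity on $D$. You are slightly more explicit than the paper in checking that $\mu_k$ equals the table average and that Item 2 puts $x_k,\phi_i^k$ in $D$, so Item 5 applies.
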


\begin{theorem} \label{theo3}
    Suppose the common conditions in Assumption \ref{assump} hold, $f$ may be nonconvex, and $F$ is bounded below by $F_{\inf}>-\infty$. Let
    \[
        \Gamma_b:=
        \frac{8\sigma_bM^2n(2n-b)}{b^3}.
    \]
    If
    \[
        \Gamma_b\eta_{\max}+\bar L
        \leq
        \frac{1}{\eta_{\max}},
    \]
    then for $R$ drawn independently and uniformly from $\{0,\ldots,K-1\}$,
    \[
        \mathbb{E}\left[\|\mathcal{R}_R\|^2\right]
        \leq
        \frac{2(F(x_0)-F_{\inf})}{K\eta_{\max}}.
    \]
\end{theorem}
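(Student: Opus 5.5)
We prove Theorem \ref{theo3} with a Lyapunov argument of the form
\[
    \Phi_k:=F(x_k)+\frac{c}{n}\sum_{i=1}^n\|x_k-\phi_i^k\|^2 ,
\]
with a constant $c>0$ chosen below. The plan has four steps: a one-step descent inequality from relative smoothness and the prox optimality condition, a recursion for the table distances, a choice of $c$ that closes the recursion, and a telescoping argument.

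\emph{Descent step.} Relative smoothness (Item 4) gives
\[
F(x_{k+1})\le F(x_k)+\langle\nabla f(x_k),x_{k+1}-x_k\rangle+\bar L D_\psi(x_{k+1},x_k)+h(x_{k+1})-h(x_k).
\]
Next compare the stochastic update $x_{k+1}$ with the full-gradient point $\widetilde x_{k+1}$.
\begin{itemize}
\item Since $x_{k+1}$ minimizes an objective that is $1/\eta_k$-strongly convex relative to $\psi$, the three-point inequality (Lemma \ref{lem:prox-three-point}) applied at the comparison point $\widetilde x_{k+1}$ bounds $\langle\widetilde\nabla_k,x_{k+1}-x_k\rangle+h(x_{k+1})-h(x_k)$.
\item Applying the same inequality to $\widetilde x_{k+1}$ with comparison point $x_k$ gives $\langle\nabla f(x_k),\widetilde x_{k+1}-x_k\rangle+h(\widetilde x_{k+1})-h(x_k)\le-\frac{1}{\eta_k}\bigl(D_\psi(\widetilde x_{k+1},x_k)+D_\psi(x_k,\widetilde x_{k+1})\bigr)$.
\end{itemize}
Adding the two and controlling the cross term $\langle\widetilde\nabla_k-\nabla f(x_k),\widetilde x_{k+1}-x_{k+1}\rangle$ by dual-norm Young, using $D_\psi\ge\frac12\|\cdot\|^2$, should give
\[
F(x_{k+1})\le F(x_k)-\Bigl(\frac{1}{\eta_k}-\bar L\Bigr)\cdot\frac12\|x_{k+1}-x_k\|^2-\frac{1}{2\eta_k}\|x_k-\widetilde x_{k+1}\|^2+\eta_k\|\widetilde\nabla_k-\nabla f(x_k)\|_*^2 ,
\]
up to absolute constants. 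Because $\eta_k\le\eta_{\max}$, the residual term satisfies $\frac{1}{2\eta_k}\|x_k-\widetilde x_{k+1}\|^2\ge\frac{\eta_{\max}}{2}\|\mathcal R_k\|^2$. This is where the normalization by $\eta_{\max}$ enters, and why only the upper safeguard is needed. The conditional variance is then bounded by Lemma \ref{nonconvexVariance}.

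\emph{Table recursion.} For $i\in B_k$ we have $\phi_i^{k+1}=x_k$; otherwise $\phi_i^{k+1}=\phi_i^k$, and each $i$ is refreshed with probability $b/n$. Using $\|a+b\|^2\le(1+\beta)\|a\|^2+(1+\beta^{-1})\|b\|^2$,
\[
\mathbb E_k\frac1n\sum_i\|x_{k+1}-\phi_i^{k+1}\|^2\le(1+\beta^{-1})\|x_{k+1}-x_k\|^2+(1+\beta)\Bigl(1-\frac bn\Bigr)\frac1n\sum_i\|x_k-\phi_i^k\|^2 .
\]
Take $\beta=b/(2n)$. Then $(1+\beta)(1-b/n)\le1-\frac{b}{2n}$, and $1+\beta^{-1}=(2n+b)/b$. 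The constant $(2n-b)$ in $\Gamma_b$ suggests the authors tuned $\beta$ slightly differently; I would adjust $\beta$ to match.

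\emph{Closing the recursion.} Choose $c$ so that the table contraction $c\frac{b}{2n}$ dominates the variance coefficient $4\eta_{\max}\sigma_bM^2/b$, i.e. $c\approx 8\eta_{\max}\sigma_bM^2n/b^2$. The remaining requirement is that the inflation term $c(2n+b)/b\cdot\|x_{k+1}-x_k\|^2$ be absorbed by $\frac12(1/\eta_k-\bar L)\|x_{k+1}-x_k\|^2$. That requirement reads $\Gamma_b\eta_{\max}+\bar L\le 1/\eta_{\max}$ with $\Gamma_b\sim 8\sigma_bM^2n(2n\pm b)/b^3$, which is exactly the hypothesis. Note that $1/\eta_k\ge1/\eta_{\max}$, so the hypothesis suffices for every $k$.

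\emph{Telescoping.} The result is $\mathbb E\Phi_{k+1}\le\mathbb E\Phi_k-\frac{\eta_{\max}}2\mathbb E\|\mathcal R_k\|^2$. Sum over $k=0,\dots,K-1$ and use $\Phi_0=F(x_0)$, since the table starts at $x_0$, together with $\Phi_K\ge F_{\inf}$. Dividing by $K$ gives the average of $\mathbb E\|\mathcal R_k\|^2$, which equals $\mathbb E\|\mathcal R_R\|^2$ for the uniform independent $R$. This yields the bound $2(F(x_0)-F_{\inf})/(K\eta_{\max})$.

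The main obstacle is the descent step. Without polarization, the constants have to come out precisely so that the residual keeps coefficient $\frac{\eta_{\max}}2$ and the cross term costs only $\eta_k\|\cdot\|_*^2$. I would first try the Young split $\langle e,u\rangle\le\frac{\eta_k}{2}\|e\|_*^2+\frac{1}{2\eta_k}\|u\|^2$, with $u=\widetilde x_{k+1}-x_{k+1}$, and absorb $\|u\|^2$ using the strong-convexity gains from both three-point inequalities.
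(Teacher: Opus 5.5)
Your route is the paper's own: the Lyapunov function $F(x_k)+cS_k$ with $S_k=\frac1n\sum_i\|x_k-\phi_i^k\|^2$, and the same two three-point inequalities (the stochastic prox compared at $\widetilde x_{k+1}$, the full-gradient prox compared at $x_k$). It also uses Young on the cross term $\langle e_k,x_{k+1}-\widetilde x_{k+1}\rangle$, a table recursion with Young parameter of order $b/n$, and telescoping with $S_0=0$. The gap is in the step you call \emph{closing the recursion}. With the constants you actually carry, it does not reduce to the stated hypothesis.

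You put $\eta_k\|e_k\|_*^2$ into the descent inequality. So your variance coefficient is $4\sigma_b\eta_{\max}M^2/b$ and $c=8\sigma_b\eta_{\max}M^2n/b^2$. Absorbing $c\frac{2n+b}{b}\|x_{k+1}-x_k\|^2$ into $\frac12(\frac1{\eta_k}-\bar L)\|x_{k+1}-x_k\|^2$ then requires
\[
\frac{16\sigma_bM^2n(2n+b)}{b^3}\,\eta_{\max}+\bar L\le\frac1{\eta_k}.
\]
This is more than twice as demanding as $\Gamma_b\eta_{\max}+\bar L\le 1/\eta_{\max}$. Your claim that it is ``exactly the hypothesis'' is therefore off by a factor $2$, and by $2n+b$ versus $2n-b$. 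Since the theorem fixes $\Gamma_b$ explicitly, an argument ``up to absolute constants'' does not prove it.

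Both slips are repairable.

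\textbf{The Young split.} Make it exact:
\[
\langle e_k,x_{k+1}-\widetilde x_{k+1}\rangle\le\frac{\eta_k}{2}\|e_k\|_*^2+\frac1{\eta_k}D_\psi(\widetilde x_{k+1},x_{k+1}).
\]
Cancel the last term against the $-\frac1{\eta_k}D_\psi(\widetilde x_{k+1},x_{k+1})$ produced by the stochastic three-point inequality. The $\pm\frac1{\eta_k}D_\psi(\widetilde x_{k+1},x_k)$ terms of the two inequalities cancel each other. So the full-gradient inequality has no spare gain to spend on the cross term, contrary to your plan to use ``both''. Spending it would cut into $-\frac1{\eta_k}D_\psi(x_k,\widetilde x_{k+1})$, which must survive whole to get the final factor $2$. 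Lemma \ref{nonconvexVariance} then gives variance coefficient $2\sigma_b\eta_kM^2/b$. Hence $c=4\sigma_b\eta_{\max}M^2n/b^2$ already satisfies $c(1-\frac{b}{2n})+\frac{2\sigma_b\eta_kM^2}{b}\le c$.

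\textbf{The table recursion.} Yours pays the Young factor $1+\beta^{-1}$ on the refreshed indices as well. The paper instead uses $x_{k+1}-\phi_i^{k+1}=x_{k+1}-x_k$ exactly for $i\in B_k$. Then only the unrefreshed fraction pays, and the coefficient of $\|x_{k+1}-x_k\|^2$ is $1+(1-p)/\gamma$. At $\gamma=p/2$ this equals $(2n-b)/b$. Alternatively, for $b<n$, your crude bound with $\beta=b/(2(n-b))$ gives contraction exactly $1-\frac{b}{2n}$ and inflation $(2n-b)/b$. So retuning $\beta$, as you suggested, also works, but $\beta=b/(2n)$ does not.

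With both fixes, $2c\,\frac{2n-b}{b}=\Gamma_b\eta_{\max}$ exactly, and the stated hypothesis closes the recursion for every $k$.
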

Theorem \ref{theo3} gives an $O(1/K)$ bound on the expected squared residual. The quantity $\Gamma_b$ captures how mini-batch sampling and table refresh interact with local component smoothness, while the upper safeguard balances this variance term against relative smoothness. Because $\Gamma_b=O(M^2n^2/b^3)$, the stability condition $\Gamma_b\eta_{\max}+\bar L\le\eta_{\max}^{-1}$ forces $\eta_{\max}=O(b^{3/2}/(nM))$, so the residual bound $2(F(x_0)-F_{\inf})/(K\eta_{\max})$ inherits the $O(n)$-type gradient complexity characteristic of SAGA rather than the improved $O(\sqrt n)$ dependence available to SARAH- or SPIDER-type recursive estimators. We nonetheless build on SAGA because its table supplies the component secant pairs $(s_{i,t},y_{i,t})$ directly, which is what the stabilized candidate consumes; porting the same candidate to a memory-light recursive estimator is left to future work. Section \ref{tests} examines how the stabilized candidate behaves with practically chosen safeguards and kernels.

\section{Numerical Experiments} \label{tests}
The experiments probe the mechanisms developed above: whether the adaptive SAGA step improves gradient efficiency, whether stabilization reduces sensitivity to the initial scale, whether the convex $O(n/K)$ guarantee is realized inside a genuinely non-Euclidean instance that meets every hypothesis of Assumption \ref{assump}, whether the same behavior transfers from synthetic to real measured data, and whether the design remains effective on a harder non-Euclidean problem that falls outside the theory. We use $\mathbf E=\mathbb R^d$ throughout and vary the distance-generating function $\psi$ with the problem geometry.

\subsection{Regularized binary classification} \label{exp-saga-bb}
We first choose the Euclidean kernel $\psi(x)=\frac12\|x\|_2^2$ and solve $\ell_2$-regularized logistic regression,
\[
    F(x)=\frac1n\sum_{i=1}^n
    \log(1+\exp(-b_i a_i^Tx))
    +\frac{\lambda}{2}\|x\|_2^2,
    \qquad \lambda=\frac1n,
\]
where $a_i\in\mathbb R^d$ and $b_i\in\{-1,+1\}$ are the features and label of sample $i$. Under this kernel, Ada-BPSG specializes to SAGA-BB. We compare it with SAGA, SVRG, SVRG-BB, loopless SVRG, and SARAH on \emph{mushrooms}, \emph{ijcnn1}, \emph{w8a}, and \emph{covtype} from LibSVM \citep{chang2011libsvm}. Every method uses the same tuning grid: the initial step size lies in $\{10^{-3},10^{-2},10^{-1},1,10,100\}$ and the batch size lies in $\{1,8,16,64\}$. For methods with an epoch length or refresh interval, we set it to $m=n/b$.

\begin{figure*}[t!]
    \centering
    \begin{subfigure}[b]{0.235\textwidth}
        \centering
        \includegraphics[width=\linewidth]{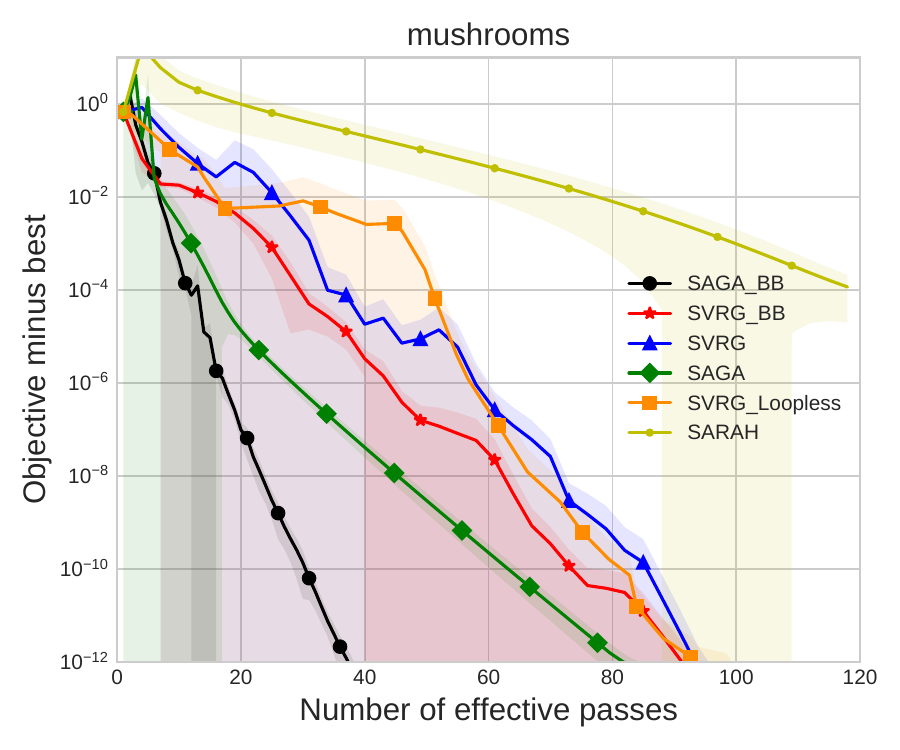}
        \caption{}
    \end{subfigure}\hfill
    \begin{subfigure}[b]{0.235\textwidth}
        \centering
        \includegraphics[width=\linewidth]{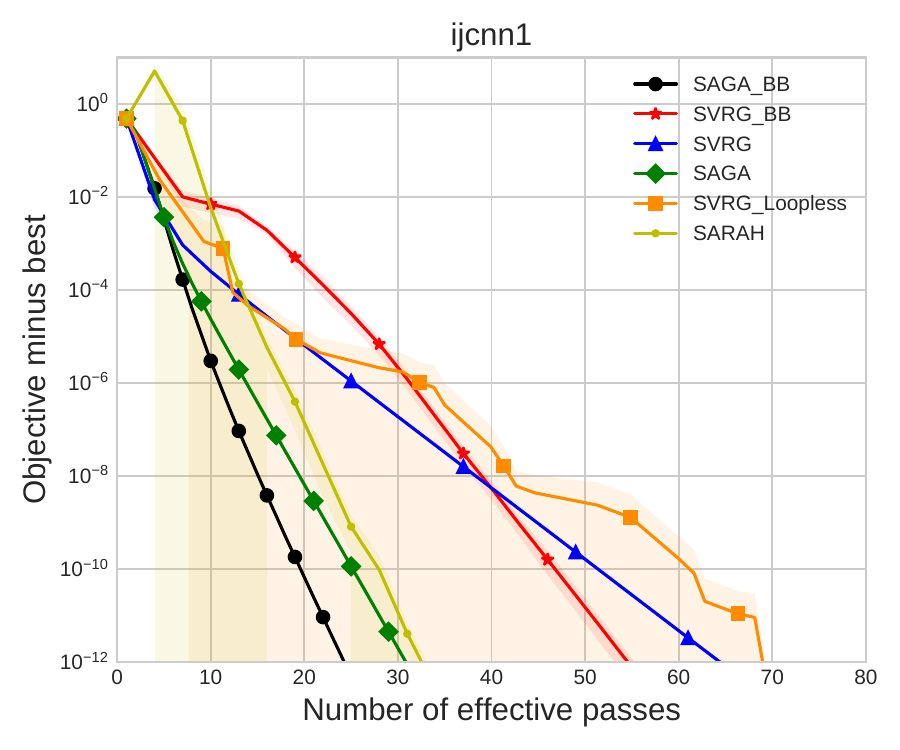}
        \caption{}
    \end{subfigure}\hfill
    \begin{subfigure}[b]{0.235\textwidth}
        \centering
        \includegraphics[width=\linewidth]{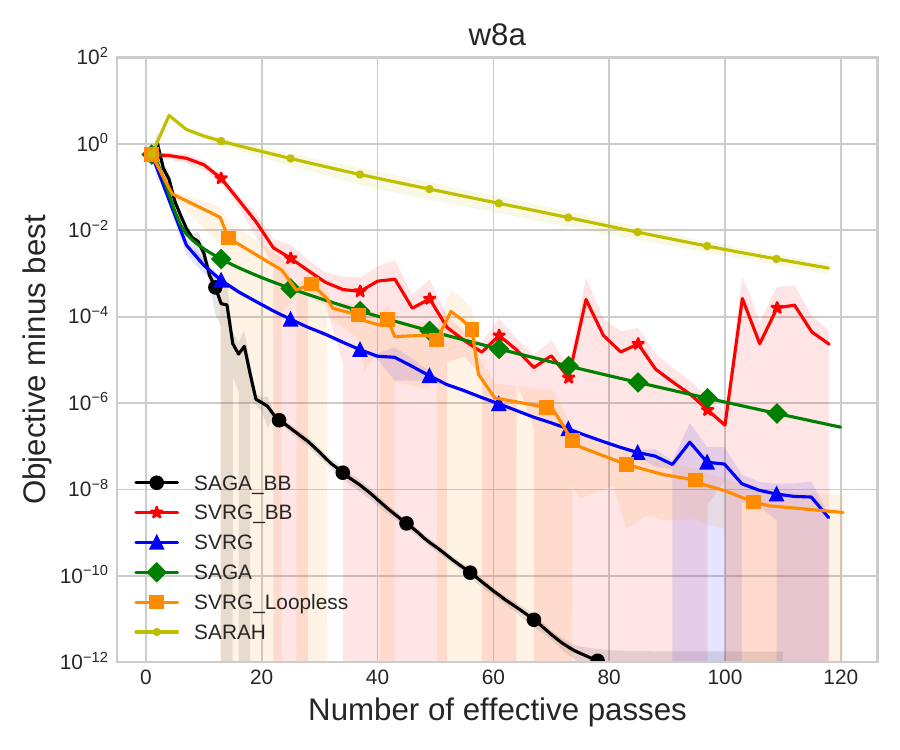}
        \caption{}
    \end{subfigure}\hfill
    \begin{subfigure}[b]{0.235\textwidth}
        \centering
        \includegraphics[width=\linewidth]{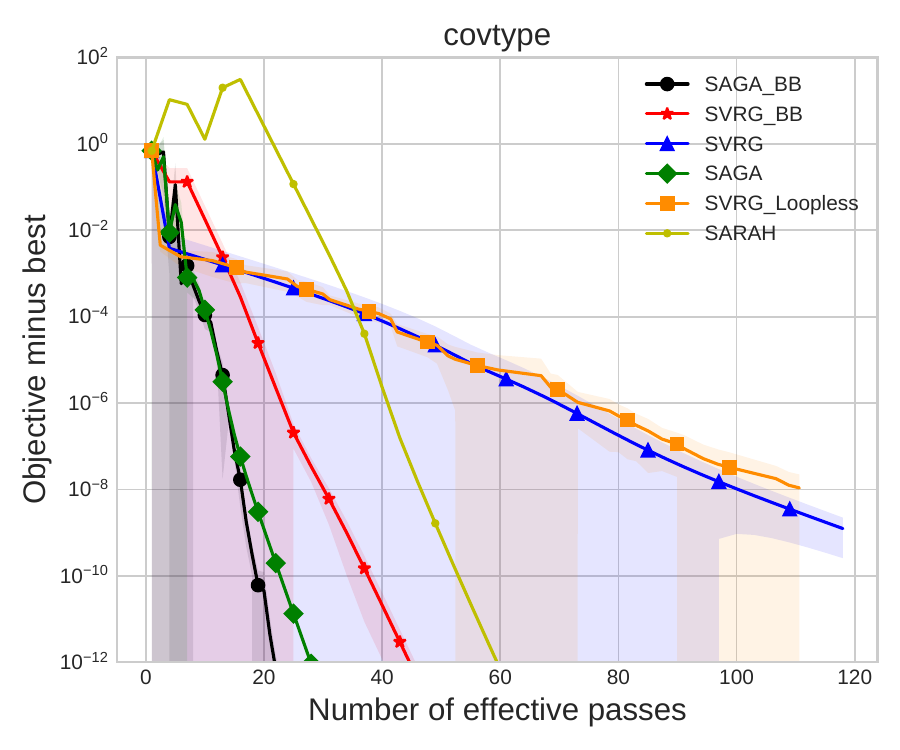}
        \caption{}
    \end{subfigure}
    \caption{Gradient efficiency for regularized logistic regression. The horizontal axis counts component-gradient evaluations normalized by $n$, and the vertical axis is the objective gap to the best value attained in the corresponding experiment. Lines and shaded regions show the mean and standard deviation over five runs. All methods use their best configuration from the common tuning grid.}
    \label{fig:loss}
\end{figure*}

Figure \ref{fig:loss} connects the curvature diagnostic in Figure \ref{fig: local information} to optimization performance. SAGA-BB reaches the smallest plotted objective gaps with the fewest effective passes on all four datasets. The gain is especially pronounced on \emph{mushrooms}, \emph{w8a}, and \emph{covtype}, where its curve separates early from both SAGA and the SVRG-family baselines. Thus the stabilized secant information does more than prevent isolated step-size spikes: it improves the gradient efficiency of the underlying SAGA update.

\begin{figure*}[t!]
    \centering
    \begin{subfigure}[b]{0.235\textwidth}
        \centering
        \includegraphics[width=\linewidth]{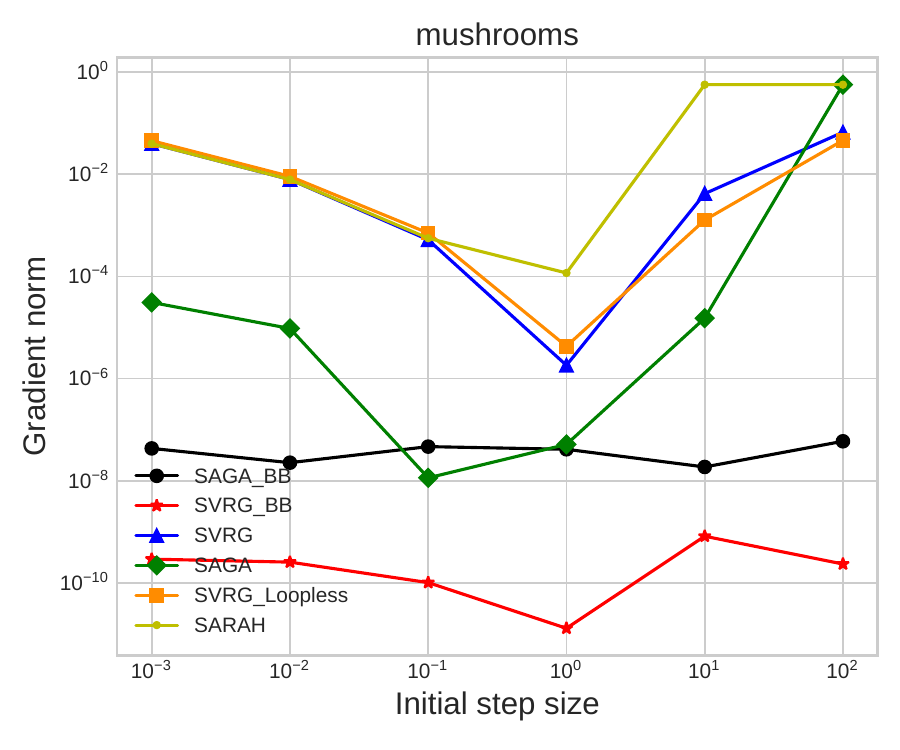}
        \caption{}
    \end{subfigure}\hfill
    \begin{subfigure}[b]{0.235\textwidth}
        \centering
        \includegraphics[width=\linewidth]{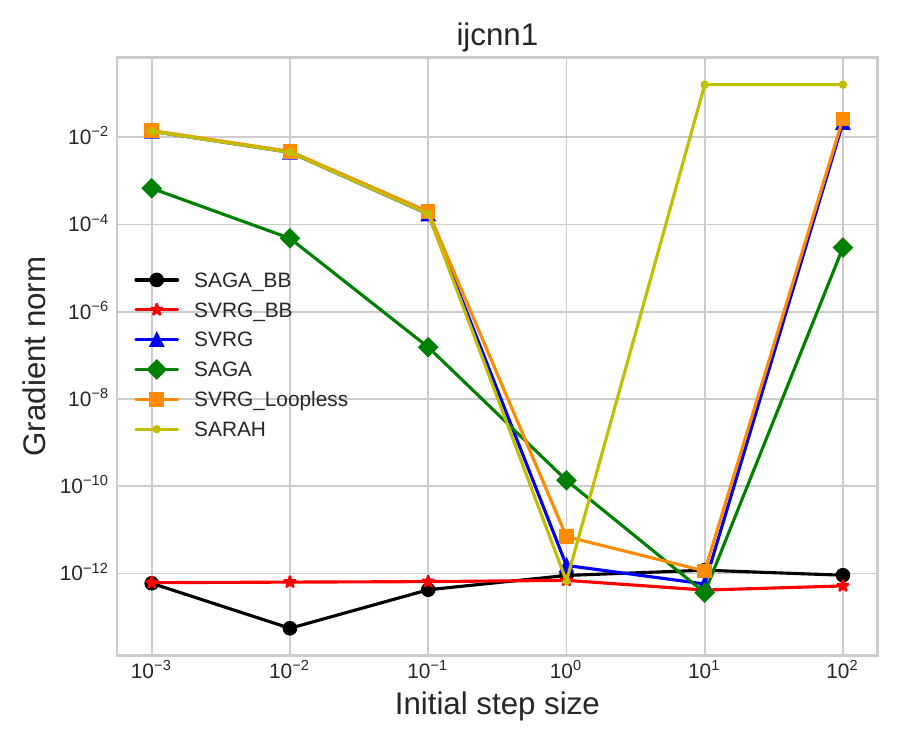}
        \caption{}
    \end{subfigure}\hfill
    \begin{subfigure}[b]{0.235\textwidth}
        \centering
        \includegraphics[width=\linewidth]{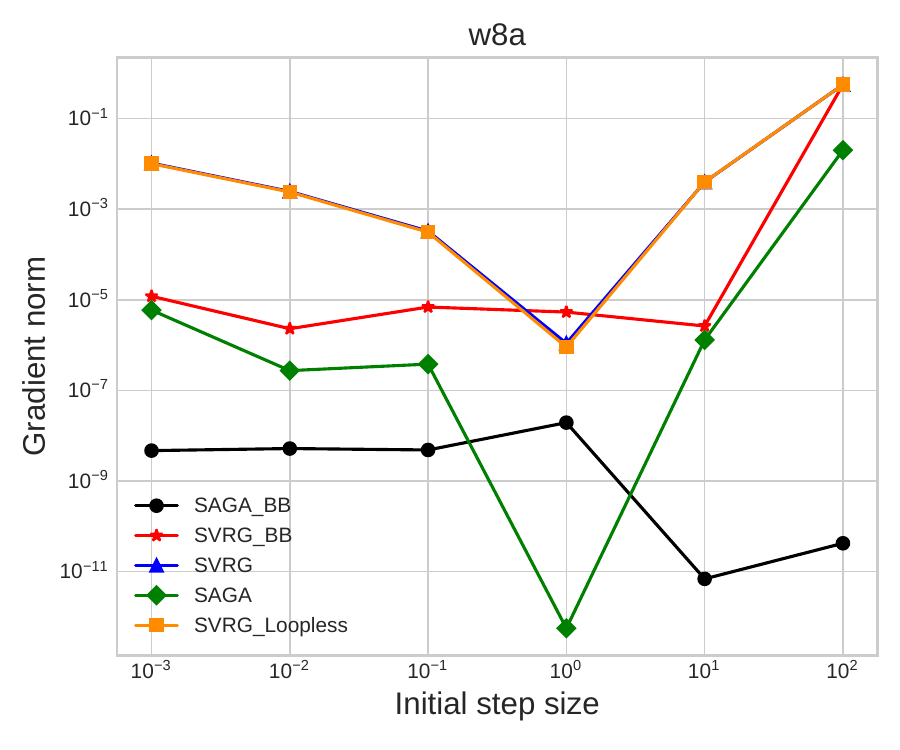}
        \caption{}
    \end{subfigure}\hfill
    \begin{subfigure}[b]{0.235\textwidth}
        \centering
        \includegraphics[width=\linewidth]{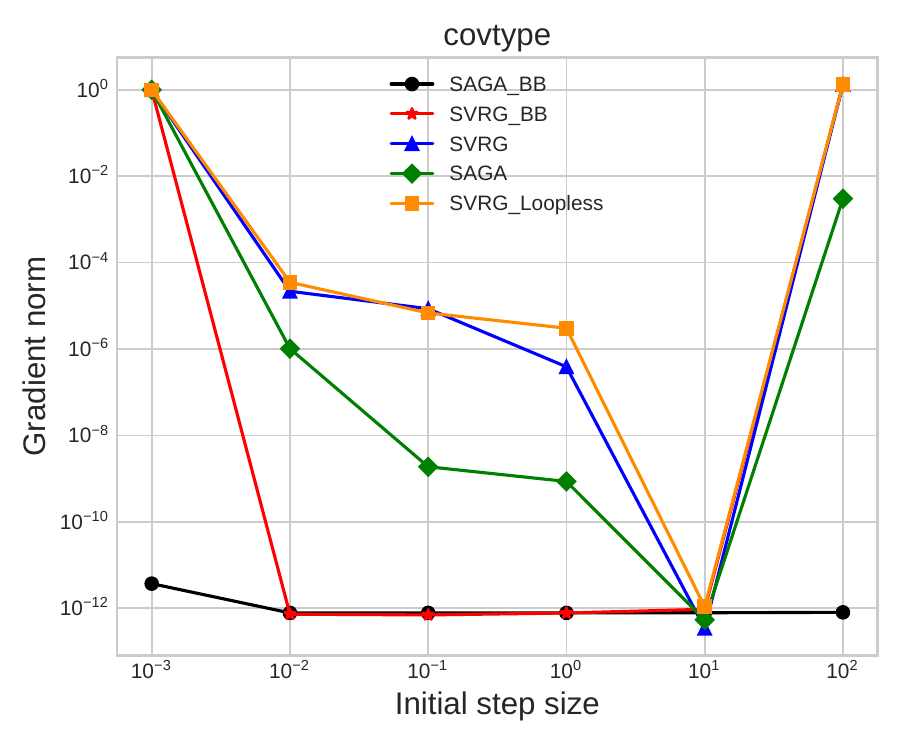}
        \caption{}
    \end{subfigure}
    \caption{Sensitivity to the initial step size after 120 effective passes with batch size $b=8$. Each part reports the final gradient norm; finite runs within the displayed range are shown.}
    \label{fig:grad_norm-step_size}
\end{figure*}

The initial-scale experiment in Figure \ref{fig:grad_norm-step_size} fixes $b=8$ and measures the final gradient norm after 120 effective passes. The vanilla VR methods vary by several orders of magnitude across the grid and can become unstable at its extremes. SVRG-BB reduces this dependence on some datasets but remains uneven on \emph{w8a}. SAGA-BB maintains a low gradient norm across the entire grid on every dataset. This behavior matches the algorithmic design: the initial value governs only the iterations before the first refresh, after which the mediant candidate adapts within the safeguard interval. Appendix \ref{Additional experiments} verifies the same pattern across additional batch sizes and with Huber loss.

\subsection{Simplex-constrained Poisson inverse problem} \label{exp:simplex}
The second experiment moves to a non-Euclidean kernel while staying strictly inside the theory, and is chosen so that variance reduction is genuinely needed. We recover mixing weights on the probability simplex from count observations,
\begin{equation} \label{simplex-poisson}
    \min_{x\in\Delta}\ F(x)=\frac1n\sum_{i=1}^n
    \Big(\langle a_i,x\rangle-b_i\log\langle a_i,x\rangle\Big),
    \qquad
    \Delta=\Big\{x\in\mathbb R^d:\ x\geq0,\ \textstyle\sum_j x_j=1\Big\},
\end{equation}
where the rows $a_i>0$ are known nonnegative dictionary atoms and $b_i\sim\mathrm{Poisson}(\langle a_i,x^\star\rangle)$ are counts generated by a planted simplex vector $x^\star$. Each $f_i(x)=\Psi_i(\langle a_i,x\rangle)$ with $\Psi_i(u)=u-b_i\log u$ is a convex linear-prediction loss, so the $O(n)$ scalar-table implementation of Appendix \ref{Efficient Implementation} applies verbatim, and $h=\iota_\Delta$ is the indicator of the simplex. Following Example \ref{ex:entropy} we equip $\mathbf E=\mathbb R^d$ with the $\ell_1$ norm, its dual $\ell_\infty$, and the negative-entropy kernel $\psi(x)=\sum_j x_j\log x_j$, which is $1$-strongly convex with respect to $\|\cdot\|_1$; the Bregman proximal step of Algorithm \ref{alg:stc_schm} then has the closed form of an entropic mirror update,
\[
    x_{k+1,j}
    =\frac{x_{k,j}\,\exp(-\eta_k[\widetilde\nabla_k]_j)}
    {\sum_{l} x_{k,l}\,\exp(-\eta_k[\widetilde\nabla_k]_l)},
\]
and the accumulators $\xi_k,\delta_k$ are formed in the $\ell_\infty$ dual norm. Because the atoms are strictly positive, the prediction obeys $\langle a_i,x\rangle\geq\underline p:=\min_i\min_j a_{ij}>0$ on the simplex, which keeps the loss away from the singularity of $\log$ and makes every constant in Assumption \ref{assump} finite: $\Delta$ is compact and convex, so the deterministic region $D=\Delta$ is available a priori; each $\Psi_i$ is convex with $\Psi_i''(u)=b_i/u^2$, giving the component-gradient bound $M=(\max_i b_i/\underline p^{\,2})\max_i\|a_i\|_\infty^2$ of Items 5--6 and a finite relative-smoothness scale $\bar L$ against the entropy kernel. Crucially, unlike a quadratic data term, the Poisson loss has \emph{no} global Euclidean Lipschitz gradient, so constant-step SGD retains a variance floor while a variance-reduced step can drive the gap arbitrarily low: this is exactly the regime the method targets, and the convex rate of Theorem \ref{theo1} is expected to hold in it. The geometry is non-Hilbert throughout, with $\sigma_b>1$ in general.

We use a controlled synthetic instance with $d=500$, $n=5000$, atoms $a_i$ with entries drawn uniformly in $[0.05,1.05]$, a planted interior signal $x^\star$, and Poisson counts $b_i$; the batch size is $b=8$ and the refresh interval is $m=n/b$. Because the problem is convex, we obtain the reference value $F^\star$ from an independent high-accuracy full-batch entropic mirror descent with backtracking, so that no plotted stochastic method is scored against its own best iterate. We compare Ada-BPSG against deterministic BPG, against BPSG with SGD, SAGA, and SARAH estimators, and against the extrapolated line-search variant BPSGE. The decisive feature of this instance is that its worst-case component constant $M$ is dominated by the $\log$ singularity and is therefore far larger than the curvature the iterates actually visit: here $\bar L\approx18$ but $M\approx2.2\times10^3$, whereas the local component smoothness at typical predictions is only $M_{\mathrm{loc}}\approx18$. Figure \ref{fig:simplex} (a)--(b) therefore adopt an \emph{a-priori} protocol: every fixed-step baseline uses the largest step its own analysis guarantees from these quantities---$(\bar L+M)^{-1}$ for SGD, $(\bar L+32M/b)^{-1}$ for SAGA, and $(\bar L+M/b)^{-1}$ for the recursive SARAH estimator---while Ada-BPSG adapts within a safeguard $\eta_{\max}=(\bar L+M_{\mathrm{loc}}/b)^{-1}$ built from the local constant, started from the lower safeguard $\eta_{\min}$. This local cap exceeds the worst-case safeguard $(\bar L+32\sigma_b M/b)^{-1}$ of Theorems \ref{theo1}--\ref{theo3}, so it is a heuristic relaxation rather than a theory-faithful setting; to make the gap explicit we also plot ``Ada-BPSG (theory cap)'', the same rule confined to the analysis cap with the global $M$. The complementary question---what a baseline achieves if its step is instead hand-tuned per instance---is answered separately in Figure \ref{fig:simplex} (c).

\begin{figure*}[t!]
    \centering
    \begin{subfigure}[b]{0.31\textwidth}
        \centering
        \includegraphics[width=\linewidth]{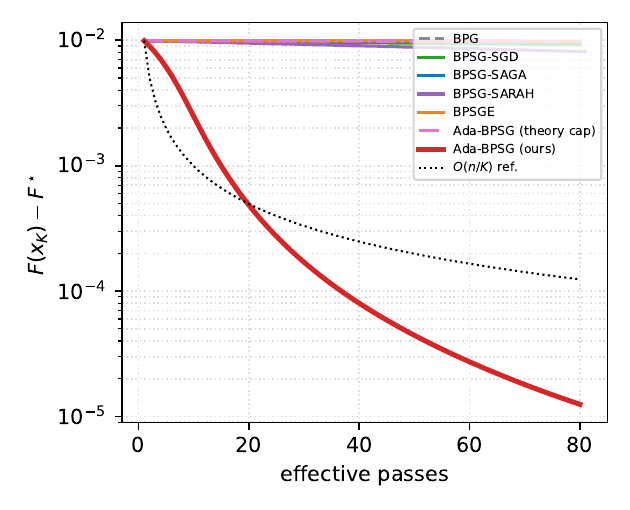}
        \caption{objective gap vs.\ effective passes}
    \end{subfigure}\hfill
    \begin{subfigure}[b]{0.31\textwidth}
        \centering
        \includegraphics[width=\linewidth]{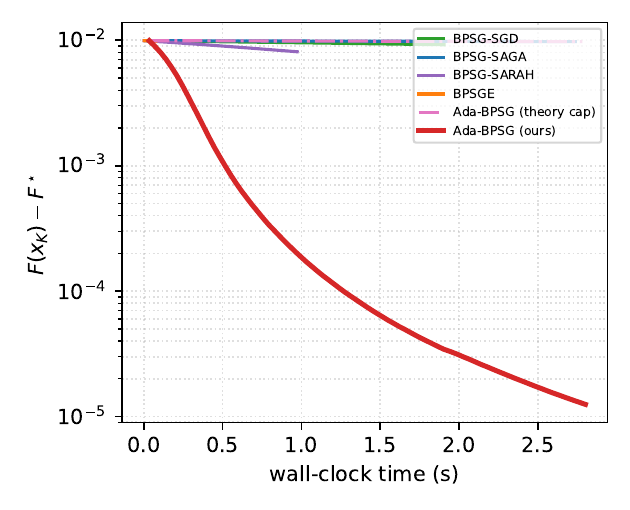}
        \caption{objective gap vs.\ wall-clock time}
    \end{subfigure}\hfill
    \begin{subfigure}[b]{0.31\textwidth}
        \centering
        \includegraphics[width=\linewidth]{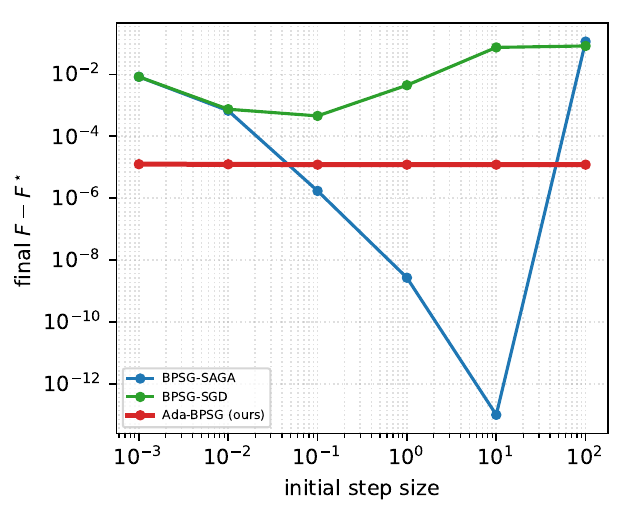}
        \caption{sensitivity to the initial step}
    \end{subfigure}
    \caption{Simplex-constrained Poisson inverse problem under the entropy kernel. (a) Objective gap $F(x_K)-F^\star$ against effective passes, with the $O(n/K)$ reference slope of Theorem \ref{theo1} overlaid; each fixed-step baseline uses the a-priori step guaranteed by its own analysis, and ``Ada-BPSG (theory cap)'' is the adaptive rule confined to the worst-case safeguard, whereas ``Ada-BPSG (ours)'' relaxes the cap to the local component constant. (b) The same gap against wall-clock time. (c) Final objective gap after a fixed budget as the \emph{initial} step sweeps the grid $\{10^{-3},\dots,10^{2}\}$: for the fixed-step methods the swept value is the step, while for Ada-BPSG it is only the starting point of the adaptive rule. Lines and shaded regions show the mean and standard deviation over five runs at batch size $b=8$.}
    \label{fig:simplex}
\end{figure*}

Figure \ref{fig:simplex} reads along the same three axes as the classification study, now in Bregman geometry. In Figure \ref{fig:simplex} (a) the gap of Ada-BPSG decays steadily to $1.3\times10^{-5}$ along the $O(n/K)$ reference slope, exhibiting the convex-rate behavior predicted by Theorem \ref{theo1} in a bona fide non-Euclidean instance. The fixed-step baselines, held to the steps their own theory licenses, remain near $10^{-2}$: because the worst-case $M$ is inflated by the $\log$ singularity, the guaranteed steps are two to four orders of magnitude below the useful curvature scale, and the methods barely move in $80$ passes. The ``theory cap'' run of Ada-BPSG, confined to the same worst-case safeguard, sits with them---so on this instance the analysis-faithful step is itself conservative, and the speedup comes precisely from adapting \emph{beyond} it. This is the intended behavior of the stabilized rule: its secant candidate climbs from $\eta_{\min}$ to the local safeguard, improving on the best a-priori baseline by more than two orders of magnitude while reading only the SAGA information it already maintains, and it does so as a heuristic relaxation of the worst-case cap, as is standard for BB-type steps. Figure \ref{fig:simplex} (b) shows the same separation in wall-clock time: Ada-BPSG reaches $10^{-5}$ in a few seconds, each step-size refresh costing only two scalar accumulations and no line search. Figure \ref{fig:simplex} (c) explains what tuning would buy the baselines and why adaptivity still wins in practice: the final gap of Ada-BPSG is essentially flat across the entire initial-step grid, whereas fixed-step SAGA is sharply V-shaped---it can reach a much lower gap, but only in a narrow band around a single step several orders of magnitude above the guaranteed cap, and it diverges beyond it---while SGD is pinned at its variance floor. Ada-BPSG thus recovers the accuracy of a hand-tuned step without any tuning and without the attendant fragility. Together Figure \ref{fig:simplex} exhibits the predicted $O(n/K)$ decay and the practical value of adapting past the worst-case safeguard, complementing the harder out-of-theory test that follows.

\subsection{Real-data confirmation: hyperspectral unmixing} \label{exp:hsi}
The synthetic study isolates the mechanism on controlled data; we now verify that it survives on measured observations, with the same estimator, safeguards, and hyperparameters. We use the Samson scene \citep{zhu2017hyperspectral}, $95\times95$ pixels over $L=156$ contiguous bands with three reference materials (soil, tree, water). To assemble a large finite sum on the simplex, we pool a spatially contiguous, spectrally homogeneous patch $R$ under a shared-abundance model: all pixels in $R$ are taken to share one abundance $x\in\Delta_d$ over a dictionary $D\in\mathbb R_+^{L\times d}$ of measured spectra. Each material contributes an \emph{endmember bundle} of $K=6$ representative spectra---cluster centers of the pixels it dominates, the standard construction for spectral variability \citep{somers2012endmember}---so $d=18$, and summing the weights within a bundle recovers a physical material abundance. Every pixel--band pair $(p,\ell)$ with $p\in R$ is one component, with atom $a_i=D_{\ell,:}\ge0$ and count $b_i=Y_{p\ell}\ge0$ the measured radiance, giving
\[
    \min_{x\in\Delta}\ \frac1n\sum_{i=1}^{n}
    \Big(\langle a_i,x\rangle-b_i\log\langle a_i,x\rangle\Big),
    \qquad n=|R|\cdot L,
\]
the same generalized Kullback--Leibler (Poisson) fidelity as \eqref{simplex-poisson}, now with real atoms and radiances. We select the $10\times10$ patch that is simultaneously most mixed and most homogeneous---mean abundance $(0.37,0.53,0.10)$, a genuinely interior target---which yields $n=15\,600$. Exactly as in the synthetic case, the worst-case component constant is inflated by measured reflectances that dip to $\underline p\approx8.6\times10^{-3}$ in some bands: here $\bar L\approx68$ and $M\approx5.6\times10^{3}$, whereas the local component smoothness is only $M_{\mathrm{loc}}\approx23$. We keep the batch size $b=8$, the a-priori baseline protocol, and the two Ada-BPSG safeguards of Section~\ref{exp:simplex} unchanged, and again obtain $F^\star$ from an independent full-batch mirror-descent solver.

\begin{figure*}[t!]
    \centering
    \begin{subfigure}[b]{0.31\textwidth}
        \centering
        \includegraphics[width=\linewidth]{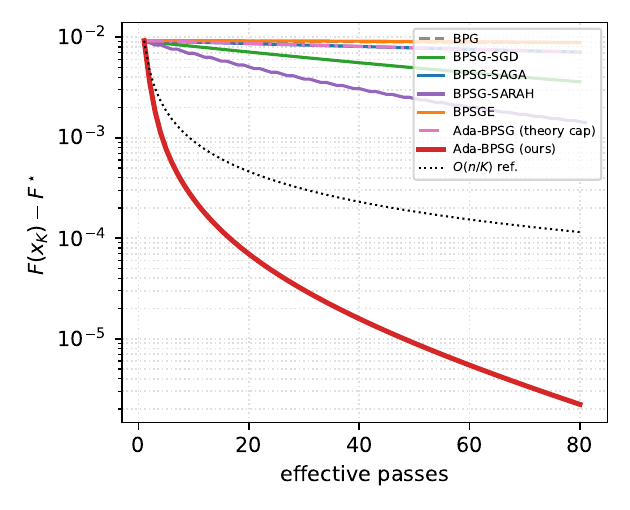}
        \caption{objective gap vs.\ effective passes}
    \end{subfigure}\hfill
    \begin{subfigure}[b]{0.31\textwidth}
        \centering        \includegraphics[width=\linewidth]{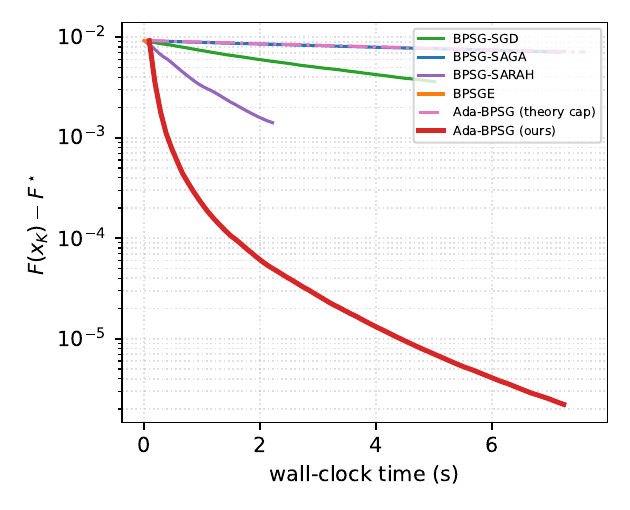}
        \caption{objective gap vs.\ wall-clock time}
    \end{subfigure}\hfill
    \begin{subfigure}[b]{0.31\textwidth}
        \centering
\includegraphics[width=\linewidth]{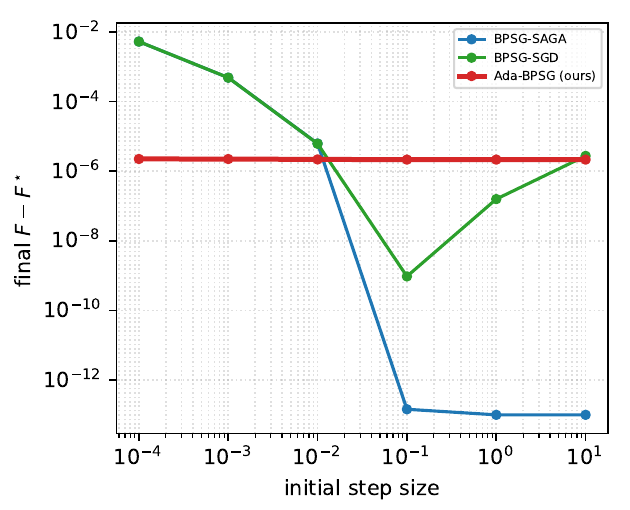}
        \caption{sensitivity to the initial step}
    \end{subfigure}
    \caption{Region-pooled Poisson unmixing on the Samson hyperspectral scene, read along the same axes as Figure~\ref{fig:simplex}. Fixed-step baselines use their a-priori guaranteed steps; ``Ada-BPSG (theory cap)'' is confined to the worst-case safeguard, ``Ada-BPSG (ours)'' relaxes it to the local component constant. (c) sweeps the \emph{initial} step---the step itself for the fixed methods, only the starting point for Ada-BPSG. Mean and standard deviation over five runs at $b=8$.}
    \label{fig:hsi}
\end{figure*}

Figure~\ref{fig:hsi} reproduces the synthetic behavior on real data. Held to the steps their own analysis follows the worst-case $M$, the fixed baselines stall between $10^{-2}$ and $10^{-3}$ (BPG $8.8\times10^{-3}$, SGD $3.6\times10^{-3}$, SAGA $7.1\times10^{-3}$, SARAH $1.4\times10^{-3}$), and the ``theory cap'' run of Ada-BPSG sits among them at $7.1\times10^{-3}$: the analysis-faithful step is once more conservative on the instance. Relaxing the safeguard to the local constant, Ada-BPSG drives the gap to $2.2\times10^{-6}$---more than two orders of magnitude below the best a-priori baseline---tracking the $O(n/K)$ reference slope and reaching $10^{-5}$ in a few seconds of wall-clock (Figure~\ref{fig:hsi} (b)). Figure~\ref{fig:hsi} (c) is transparent about the role of tuning: with its step hand-swept, fixed SAGA reaches near machine precision over a broad band of large steps, below Ada-BPSG, so on this instance a well-chosen constant step is very forgiving once found; the adaptive rule instead stays flat at $\approx2\times10^{-6}$ across five orders of magnitude of initial step, recovering a low gap with neither a per-instance search nor a line search, from the SAGA information it already maintains. The mechanism the synthetic experiment isolates thus governs a real unmixing problem under the identical configuration.

\subsection{Sparse nonnegative matrix factorization}
The third experiment tests the non-Euclidean update on sparse nonnegative matrix factorization (NMF). Given $\mathcal M\in\mathbb R^{I_1\times I_2}$ and a target rank $r<\min\{I_1,I_2\}$, we solve
\begin{equation}
    \begin{aligned}
        \min_{U,V}\quad
        &\Phi(U,V):=\frac12\|\mathcal M-UV\|_F^2,\\
        \text{subject to}\quad
        &U,V\geq0,\\
        &\|U_{:j}\|_0\leq s_1,\quad
        \|V_{j:}\|_0\leq s_2\quad(j\in[r]).
    \end{aligned}
\end{equation}
where $U\in\mathbb R_+^{I_1\times r}$, $V\in\mathbb R_+^{r\times I_2}$, $U_{:j}$ is column $j$ of $U$, $V_{j:}$ is row $j$ of $V$, and $\|\cdot\|_0$ counts nonzero entries. Equivalently, the constraints form the extended-valued term $h$, whose indicator is zero on the feasible set and $+\infty$ outside it.

We use the quartic kernel
\[
    \psi(U,V)=3\psi_1(U,V)+\|\mathcal M\|_F\psi_2(U,V),
\]
with
\[
    \psi_1(U,V):=\left(\frac{\|U\|_F^2+\|V\|_F^2}{2}\right)^2,
    \qquad
    \psi_2(U,V):=\frac{\|U\|_F^2+\|V\|_F^2}{2},
\]
which matches the relative-smooth geometry of matrix factorization \citep{mukkamala2019beyond}. Prior results permit the relative-smoothness scale $\bar L\geq1$ for this kernel \citep{bolte2018first,wang2024bregman}; we set $\bar L=1$. Because the cardinality constraints make $h$ nonconvex, this experiment evaluates the algorithmic mechanism beyond the convex-regularizer setting of Assumption \ref{assump}.

On the ORL dataset of 400 normalized face images, we set $r=25$, sample 5\% of the components per iteration, and test four sparsity levels. We compare Ada-BPSG with BPG and with BPSG/BPSGE using SGD, SAGA, or SARAH estimators. BPSGE is the extrapolated line-search variant. The smoothness scale $M$ is estimated by the power method used in \citet{wang2024bregman}, and the initial step size is selected from the same grid as in the classification experiments. Each method runs for 200 epochs.

\begin{figure*}[t!]
    \centering
    \begin{subfigure}[b]{0.235\textwidth}
        \centering
        \includegraphics[width=\linewidth]{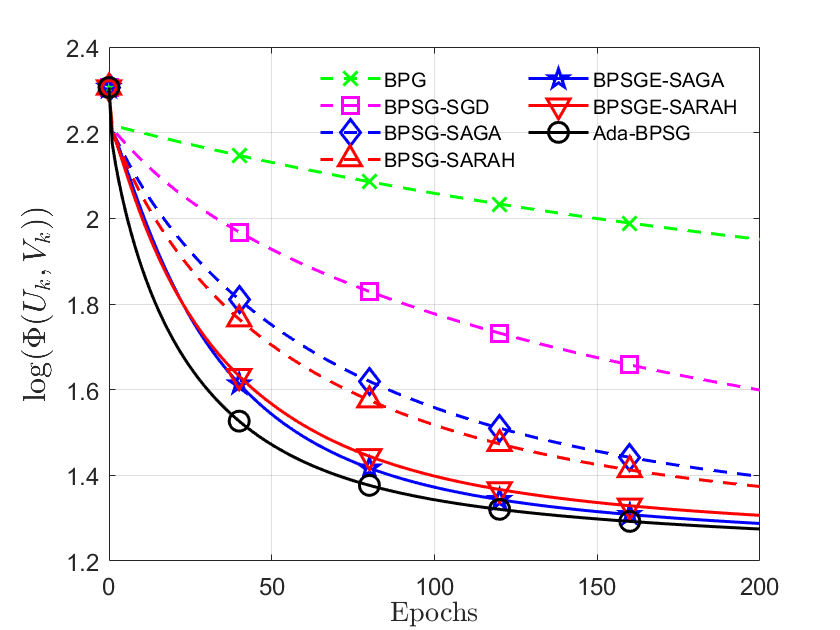}
        \caption{$s_1=I_1/2$ and $s_2=I_2/2$}
    \end{subfigure}\hfill
    \begin{subfigure}[b]{0.235\textwidth}
        \centering
        \includegraphics[width=\linewidth]{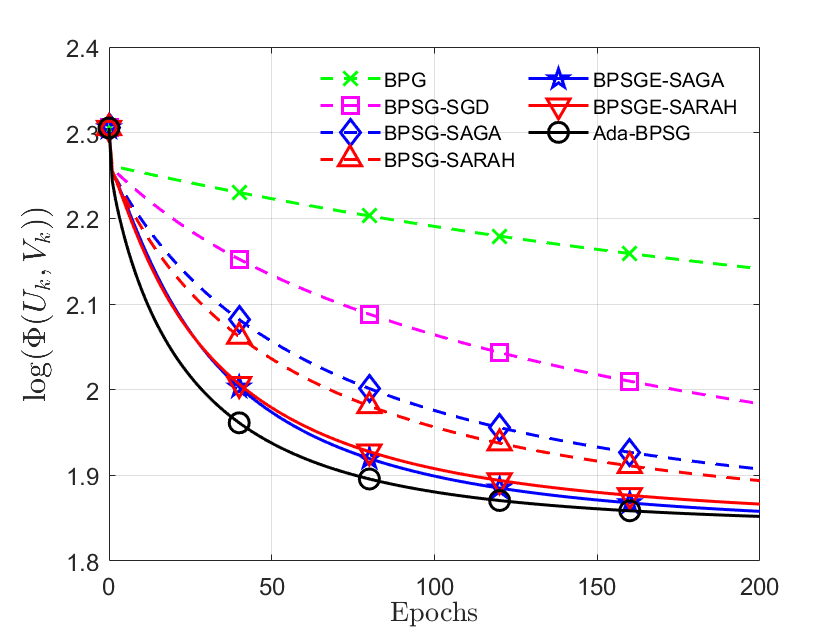}
        \caption{$s_1=I_1/4$ and $s_2=I_2/4$}
    \end{subfigure}\hfill
    \begin{subfigure}[b]{0.235\textwidth}
        \centering
        \includegraphics[width=\linewidth]{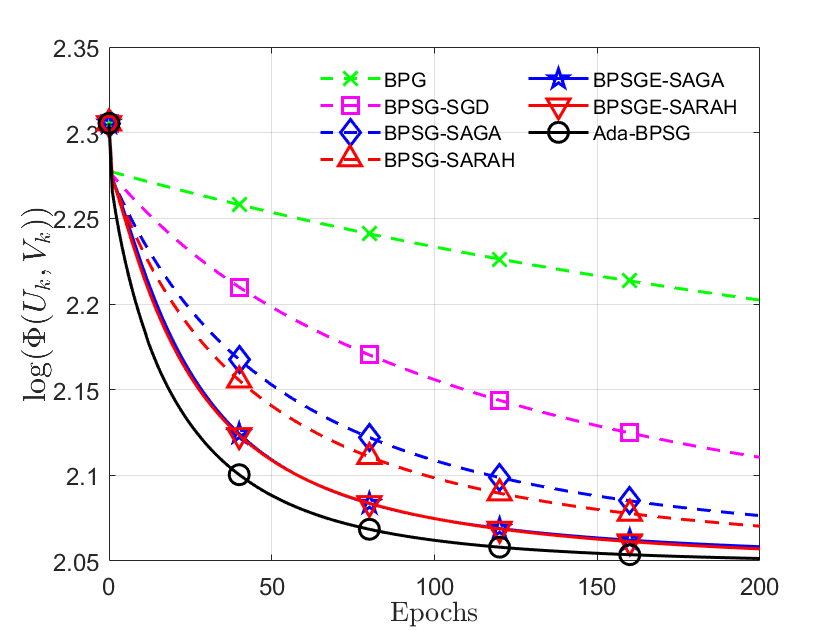}
        \caption{$s_1=I_1/6$ and $s_2=I_2/6$}
    \end{subfigure}\hfill
    \begin{subfigure}[b]{0.235\textwidth}
        \centering
        \includegraphics[width=\linewidth]{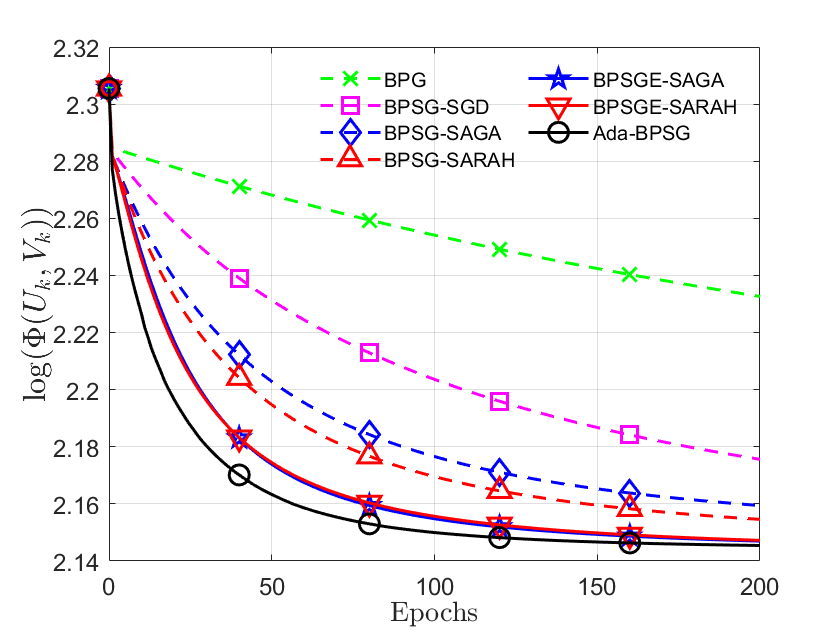}
        \caption{$s_1=I_1/8$ and $s_2=I_2/8$}
    \end{subfigure}
    \caption{Sparse NMF on ORL under four cardinality levels. Ada-BPSG attains the lowest objective trajectory in each regime, including against the extrapolated BPSGE methods that use line search.}
    \label{nmf}
\end{figure*}

Figure \ref{nmf} completes the empirical chain. Ada-BPSG descends faster than the stochastic BPSG variants and remains slightly below the two extrapolated BPSGE curves throughout the four sparsity regimes. Together with the Euclidean results, this shows that the stabilized adaptive step transfers from ordinary proximal SAGA to a problem-adapted Bregman geometry without introducing line-search evaluations.

\section{Conclusion}
Ada-BPSG turns the gradient information already maintained by SAGA into a stable, curvature-adaptive Bregman proximal step. Its mediant aggregation downweights unreliable local BB ratios, while clipping and monotonicity supply the deterministic invariant needed for convergence. This separation keeps the method line-search-free and makes the algorithmic and theoretical roles of the step-size components explicit.

The analysis follows a single chain across regimes: relative smoothness yields Bregman descent, local component smoothness controls the SAGA table error, and the safeguard balances the two. The resulting guarantees comprise an $O(n/K)$ convex rate, a restarted linear rate under relative quadratic growth, and an $O(1/K)$ bound on the expected squared proximal residual in the nonconvex setting. The experiments reflect the same mechanism. Ada-BPSG improves gradient efficiency and initial-step robustness in Euclidean logistic regression, exhibits the predicted convex rate on a simplex-constrained Poisson inverse problem under the entropy kernel---a non-Euclidean instance meeting every hypothesis of the analysis, where it adapts past the conservative worst-case safeguard to outrun every fixed-step baseline held to its guaranteed step, with the same advantage confirmed on real hyperspectral unmixing data---and achieves the lowest objective trajectories among the compared Bregman methods for sparse NMF, without the line search used by the extrapolated baselines.

\paragraph{Limitations and directions.}
The current theory analyzes the clipped sequence on a deterministic bounded region with a strongly convex kernel; the admissible safeguard depends on $\bar L$ and the local component constant $M$. General finite-sum models also retain the $O(nd)$ SAGA table, although linear-prediction losses admit the $O(n)$ implementation in Appendix A.2. Extending the same stabilized candidate to memory-light SVRG, SARAH, or SPIDER estimators, and deriving safeguards under weaker kernel and localization conditions, are natural next steps.

\clearpage
\bibliographystyle{plainnat}
\bibliography{reference}

\clearpage
\appendix
\section{Proofs and Supplementary Details}

\subsection{Auxiliary inequalities}
\begin{lemma}[Three-point proximal inequality] \label{lem:prox-three-point}
Let
\[
    x^+=\arg\min_x
    \left\{
        h(x)+\langle g,x\rangle+\frac1\eta D_\psi(x,x_k)
    \right\}.
\]
Then, for any $u\in\mathbf{E}$,
\[
    h(x^+)-h(u)+\langle g,x^+-u\rangle
    \leq
    \frac1\eta
    \left[
        D_\psi(u,x_k)-D_\psi(u,x^+)-D_\psi(x^+,x_k)
    \right].
\]
\end{lemma}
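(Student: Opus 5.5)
The plan is to read off the first-order optimality condition of the convex subproblem defining $x^+$, pair it with the subgradient inequality for $h$, and then convert the Bregman inner product into divergences by the exact three-point identity for $D_\psi$. No polarization identity or norm structure enters; only the definition of $D_\psi$ is used.

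\emph{Optimality condition.} The objective $\Phi(x):=h(x)+\langle g,x\rangle+\frac1\eta D_\psi(x,x_k)$ is the sum of the proper closed convex $h$ and a differentiable convex function whose gradient is $g+\frac1\eta(\nabla\psi(x)-\nabla\psi(x_k))$. The minimizer exists and is unique by Assumption \ref{assump}, Item 1, and $\psi$ is finite and $C^1$ on all of $\mathbf{E}$. The sum rule for subdifferentials (differentiable plus convex) therefore applies and gives
\[
    -g-\frac1\eta\bigl(\nabla\psi(x^+)-\nabla\psi(x_k)\bigr)\in\partial h(x^+).
\]

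\emph{Subgradient inequality.} For any $u$ with $h(u)<\infty$, convexity of $h$ yields
\[
    h(u)\geq h(x^+)-\Bigl\langle g+\tfrac1\eta(\nabla\psi(x^+)-\nabla\psi(x_k)),\,u-x^+\Bigr\rangle.
\]
Rearranging gives
\[
    h(x^+)-h(u)+\langle g,x^+-u\rangle\leq \frac1\eta\langle \nabla\psi(x^+)-\nabla\psi(x_k),\,u-x^+\rangle .
\]
If $h(u)=+\infty$, the left-hand side is $-\infty$ and the claim holds trivially.

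\emph{Three-point identity.} Expanding the definitions of the three divergences gives, for any $u$,
\[
    \langle \nabla\psi(x^+)-\nabla\psi(x_k),\,u-x^+\rangle = D_\psi(u,x_k)-D_\psi(u,x^+)-D_\psi(x^+,x_k).
\]
To verify it, write out $D_\psi(u,x_k)-D_\psi(u,x^+)-D_\psi(x^+,x_k)$. The $\psi(u)$, $\psi(x^+)$ and $\psi(x_k)$ terms cancel. What remains is $-\langle\nabla\psi(x_k),u-x_k\rangle+\langle\nabla\psi(x^+),u-x^+\rangle+\langle\nabla\psi(x_k),x^+-x_k\rangle$, which collapses to the left-hand side. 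Substituting this into the previous display yields the statement, with equality in the last step.

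\emph{Main obstacle.} The algebra is routine, so the only step needing care is justifying the optimality condition: the sum rule and the existence of the minimizer. In this setting the sum rule holds because $\psi$ is finite and continuously differentiable everywhere, so the smooth term is continuous on $\operatorname{dom}h$. Existence comes either from the uniqueness assumption in Item 1 or from coercivity: $D_\psi(\cdot,x_k)\geq\frac12\|\cdot-x_k\|^2$, $h$ is minorized by an affine function, and $\dim\mathbf{E}<\infty$.
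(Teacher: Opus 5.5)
Your proposal is correct and follows the paper's proof: the optimality condition $0\in\partial h(x^+)+g+\frac1\eta(\nabla\psi(x^+)-\nabla\psi(x_k))$, then the subgradient inequality for $h$ at $x^+$, then the Bregman three-point identity, which you verify explicitly where the paper only invokes it. Your remarks on the sum rule, existence of the minimizer, and the trivial case $h(u)=+\infty$ fill in details the paper leaves implicit.
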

\begin{proof}
The optimality condition gives
\[
    0\in \partial h(x^+)+g+\frac1\eta(\nabla\psi(x^+)-\nabla\psi(x_k)).
\]
Choose $\xi^+\in\partial h(x^+)$ satisfying this inclusion. By convexity of $h$,
\[
    h(u)\geq h(x^+)+\langle \xi^+,u-x^+\rangle.
\]
Substituting $\xi^+=-g-\eta^{-1}(\nabla\psi(x^+)-\nabla\psi(x_k))$ and using the Bregman three-point identity gives the result.
\end{proof}

\subsection{Scalar-table implementation} \label{Efficient Implementation}
For a linear-prediction loss $f_i(x)=\Psi_i(\langle a_i,x\rangle)$, the scalar $\Phi_i^k:=\langle a_i,\phi_i^k\rangle$ determines the stored component gradient. Algorithm \ref{alg:impt} maintains these scalars together with the aggregate gradient $\mu_k$, reducing SAGA table storage from $O(nd)$ to $O(n)$ \citep{zhou2019direct}. The two secant accumulators remain scalar, so the adaptive step requires no additional vector table.

\begin{algorithm}[ht]
\caption{Efficient Implementation of Ada-BPSG}
\label{alg:impt}
\small
\begin{algorithmic}[1]

\State \textbf{Input:} max iterations $K$, batch-size $b\in\{1,\ldots,n\}$, initial point $x_0$
\Statex \hspace{\algorithmicindent} initial step-size $\eta_0\in[\eta_{\min},\eta_{\max}]$, update frequency $m\geq1$
\Statex \hspace{\algorithmicindent} BB scaling $\alpha>0$, BB buffer $\theta>0$, safeguard bounds $0<\eta_{\min}\leq\eta_{\max}$
\State \textbf{Initialize:} scalar table $\Phi_i^0 = \langle a_i,x_0 \rangle$ for $i=1,\ldots,n$
\Statex \hspace{\algorithmicindent} $\mu_0=\frac1n\sum_{i=1}^n\nabla\Psi_i(\Phi_i^0)a_i$, $\xi_0=0$, and $\delta_0=0$

\For{$k = 0,1,\dots,K-1$}

    \If{$k>0$}
        \State $\eta_k=\eta_{k-1}$
    \EndIf

    \If{$k>0$ and $k \bmod m = 0$}
        \State $\widehat{\eta}_k =
        \begin{cases}
        \dfrac{1}{\alpha}\dfrac{\xi_k}{\delta_k + \theta \xi_k}, & \xi_k>0,\\
        \eta_{k-1}, & \xi_k=0,
        \end{cases}$
        \State $\eta_k=\max\{\eta_{k-1},
        \operatorname{clip}_{[\eta_{\min},\eta_{\max}]}(\widehat{\eta}_k)\}$
        \State $\xi_k = 0$, $\delta_k = 0$
    \EndIf

    \State Randomly pick a batch $B_k$ uniformly without replacement.
    \State Store $\Phi^{k+1}_i = \langle a_i, x_k \rangle$ for $i \in B_k$
    \State Set $\Phi_i^{k+1}=\Phi_i^k$ for $i\notin B_k$

    \State $\widetilde{\nabla}_k =
        \frac{1}{b}\sum_{i\in B_k}
        \bigl(
            \nabla \Psi_i(\Phi^{k+1}_i)
            -
            \nabla \Psi_i(\Phi^k_i)
        \bigr)\, a_i
        + \mu_k$

    \State $x_{k+1}
        = \arg\min_{x}\left\{
            h(x)
            + \langle \widetilde{\nabla}_{k}, x \rangle
            + \frac{1}{\eta_k} D_{\psi}(x,x_k)
        \right\}$

    \State $\xi_{k+1} = \xi_k +
        \sum_{i \in B_k}
        \left|
            (\Phi_i^{k+1}-\Phi_i^k)
            \bigl(
                \nabla \Psi_i(\Phi^{k+1}_i)
                -
                \nabla \Psi_i(\Phi^{k}_i)
            \bigr)
        \right|$

    \State $\delta_{k+1} = \delta_k +
        \sum_{i \in B_k}
        \Bigl(
            \bigl(
                \nabla \Psi_i(\Phi^{k+1}_i)
                -
                \nabla \Psi_i(\Phi^{k}_i)
            \bigr)^2
            \cdot \|a_i\|_*^2
        \Bigr)$

    \State $\mu_{k+1}
        = \mu_k
        + \frac{1}{n}\sum_{i\in B_k}
        \bigl(
            \nabla \Psi_i(\Phi^{k+1}_i)
            -
            \nabla \Psi_i(\Phi^k_i)
        \bigr)\, a_i$

\EndFor

\State \textbf{return} $x_K$

\end{algorithmic}
\end{algorithm}

\subsection{Proof of Lemma \ref{VB}}
Let
\[
    a_i^k:=\nabla f_i(x_k)-\nabla f_i(\phi_i^k),
    \qquad
    \bar a^k:=\frac1n\sum_{i=1}^n a_i^k.
\]
Since $\mu_k=\frac1n\sum_i\nabla f_i(\phi_i^k)$, the estimator can be written as
\[
    \widetilde{\nabla}_k
    =
    \frac1b\sum_{i\in B_k}a_i^k+\mu_k.
\]
The uniform sampling assumption gives
\[
    \mathbb{E}_k[\widetilde{\nabla}_k]
    =
    \bar a^k+\mu_k
    =
    \nabla f(x_k).
\]
Moreover,
\[
    \widetilde{\nabla}_k-\nabla f(x_k)
    =
    \frac1b\sum_{i\in B_k}(a_i^k-\bar a^k).
\]
By the sampling second-moment bound in Assumption \ref{assump},
\[
\begin{split}
    \mathbb{E}_k\|\widetilde{\nabla}_k-\nabla f(x_k)\|_*^2
    &\leq
    \frac{\sigma_b}{bn}\sum_{i=1}^n\|a_i^k-\bar a^k\|_*^2\\
    &\leq
    \frac{4\sigma_b}{bn}\sum_{i=1}^n\|a_i^k\|_*^2.
\end{split}
\]
For the convex setting, the self-bounding condition implies
\[
\begin{split}
    \|a_i^k\|_*^2
    &\leq
    2\|\nabla f_i(x_k)-\nabla f_i(x^*)\|_*^2
    +2\|\nabla f_i(\phi_i^k)-\nabla f_i(x^*)\|_*^2\\
    &\leq
    4M\left(\Delta_i(x_k)+\Delta_i(\phi_i^k)\right).
\end{split}
\]
Combining the two displays yields
\[
    \mathbb{E}_k\|\widetilde{\nabla}_k-\nabla f(x_k)\|_*^2
    \leq
    \frac{16\sigma_bM}{b}
    \left[
        \Delta(x_k)+\frac1n\sum_{i=1}^n\Delta_i(\phi_i^k)
    \right].
\]
\qed

\subsection{Proof of Theorem \ref{theo1}}
Let
\[
    Q_i(x):=\Delta_i(x)+H(x),
    \qquad
    T_k:=\frac1\omega\frac1n\sum_{i=1}^n Q_i(\phi_i^k),
    \qquad
    \omega:=\frac{4b}{n}.
\]
Since $\phi_i^0=x_0$ and $\nabla f(x^*)+\rho^*=0$,
\[
    T_0=\frac{n}{4b}\left(F(x_0)-F(x^*)\right).
\]
The table update gives
\[
    \mathbb{E}_k[T_{k+1}]
    =
    \left(1-\frac bn\right)T_k
    +\frac{b}{n\omega}\left(F(x_k)-F(x^*)\right).
\]

Using the upper smooth-adaptability of $f$ and Lemma \ref{lem:prox-three-point} with $u=x^*$ and $g=\widetilde{\nabla}_k$, we first obtain
\[
\begin{split}
    F(x_{k+1})-F(x^*)
    \leq{}&
    f(x_k)-f(x^*)-\langle\nabla f(x_k),x_k-x^*\rangle\\
    &+\langle \nabla f(x_k)-\widetilde{\nabla}_k,x_{k+1}-x_k\rangle
    +\langle \nabla f(x_k)-\widetilde{\nabla}_k,x_k-x^*\rangle\\
    &+\left(\bar L-\frac1{\eta_k}\right)D_\psi(x_{k+1},x_k)
    +\frac1{\eta_k}\left(D_\psi(x^*,x_k)-D_\psi(x^*,x_{k+1})\right).
\end{split}
\]
By convexity of $f$, the first line is nonpositive. Taking conditional expectation and using
$\mathbb{E}_k[\widetilde{\nabla}_k]=\nabla f(x_k)$ gives
\[
\begin{split}
    \mathbb{E}_k[F(x_{k+1})-F(x^*)]
    \leq{}&
    \mathbb{E}_k\langle \nabla f(x_k)-\widetilde{\nabla}_k,x_{k+1}-x_k\rangle\\
    &+\left(\bar L-\frac1{\eta_k}\right)\mathbb{E}_kD_\psi(x_{k+1},x_k)
    +\frac1{\eta_k}\left(D_\psi(x^*,x_k)-\mathbb{E}_kD_\psi(x^*,x_{k+1})\right).
\end{split}
\]
By Young's inequality and $D_\psi(x_{k+1},x_k)\geq \frac12\|x_{k+1}-x_k\|^2$,
\[
    \mathbb{E}_k\langle \nabla f(x_k)-\widetilde{\nabla}_k,x_{k+1}-x_k\rangle
    \leq
    \frac{1}{2\beta}
    \mathbb{E}_k\|\widetilde{\nabla}_k-\nabla f(x_k)\|_*^2
    +\beta\mathbb{E}_kD_\psi(x_{k+1},x_k).
\]
Take $\beta=32\sigma_bM/b$. Since $\eta_{\max}\leq(\bar L+\beta)^{-1}$, the coefficient of $D_\psi(x_{k+1},x_k)$ is nonpositive. Lemma \ref{VB} gives
\[
    \frac{1}{2\beta}
    \mathbb{E}_k\|\widetilde{\nabla}_k-\nabla f(x_k)\|_*^2
    \leq
    \frac14\Delta(x_k)
    +\frac14\frac1n\sum_{i=1}^n\Delta_i(\phi_i^k).
\]
Because $H\geq0$, we have $\Delta(x_k)\leq F(x_k)-F(x^*)$ and
$\frac1n\sum_i\Delta_i(\phi_i^k)\leq\omega T_k$. Combining these bounds with the recursion for $T_{k+1}$, and using $b/(n\omega)=1/4$ and $\omega/4=b/n$, gives
\[
\begin{split}
    \mathbb{E}_k[F(x_{k+1})-F(x^*)+T_{k+1}]
    \leq{}&
    T_k+\frac12(F(x_k)-F(x^*))\\
    &+\frac1{\eta_k}
    \left(D_\psi(x^*,x_k)-\mathbb{E}_kD_\psi(x^*,x_{k+1})\right).
\end{split}
\]
Rearranging,
\[
\begin{split}
    \frac12\mathbb{E}_k[F(x_{k+1})-F(x^*)]
    \leq{}&
    T_k-\mathbb{E}_kT_{k+1}
    +\frac12\left(F(x_k)-F(x^*)-\mathbb{E}_k[F(x_{k+1})-F(x^*)]\right)\\
    &+\frac1{\eta_k}
    \left(D_\psi(x^*,x_k)-\mathbb{E}_kD_\psi(x^*,x_{k+1})\right).
\end{split}
\]
Let $a_k=1/\eta_k$. Since the convex theorem assumes $\eta_k$ is nondecreasing, $a_k$ is nonincreasing. Therefore,
\[
    \sum_{k=0}^{K-1}
    \mathbb{E}\left[
    a_k\left(D_\psi(x^*,x_k)-D_\psi(x^*,x_{k+1})\right)
    \right]
    \leq
    a_0D_\psi(x^*,x_0)
    \leq
    \frac1{\eta_{\min}}D_\psi(x^*,x_0).
\]
Summing the preceding one-step inequality and using $T_K\geq0$ yields
\[
    \frac12\sum_{k=0}^{K-1}
    \mathbb{E}[F(x_{k+1})-F(x^*)]
    \leq
    T_0+\frac12(F(x_0)-F(x^*))
    +\frac1{\eta_{\min}}D_\psi(x^*,x_0).
\]
Jensen's inequality for $\bar x_K=\frac1K\sum_{k=1}^K x_k$ completes the proof.

\subsection{Proof of Theorem \ref{theo2}}
Condition on the history up to the beginning of restart round $t$. The restart round reinitializes the SAGA table by setting all table points to $x^{(t)}$ and uses a safeguarded step-size sequence satisfying the assumptions of Theorem \ref{theo1} within that round. Therefore the initial Lyapunov term in Theorem \ref{theo1} has the same form with $x_0$ replaced by $x^{(t)}$. Applying Theorem \ref{theo1} to that round and using the relative quadratic growth condition gives
\[
    D_\psi(x^*,x^{(t)})
    \leq
    \frac1\mu\left(F(x^{(t)})-F(x^*)\right).
\]
Hence, with $A_b=\frac{n}{4b}+\frac12$,
\[
    \mathbb{E}\left[F(x^{(t+1)})-F(x^*)\mid x^{(t)}\right]
    \leq
    \frac{2}{K}
    \left(A_b+\frac1{\mu\eta_{\min}}\right)
    \left(F(x^{(t)})-F(x^*)\right).
\]
Taking total expectation, the stated lower bound on $K$ makes the contraction factor at most $1/2$, and induction gives the claim.

\subsection{Proof of Lemma \ref{nonconvexVariance}}
Using the notation from the proof of Lemma \ref{VB},
\[
    \widetilde{\nabla}_k-\nabla f(x_k)
    =
    \frac1b\sum_{i\in B_k}(a_i^k-\bar a^k).
\]
The sampling second-moment bound and Jensen's inequality give
\[
    \mathbb{E}_k\|\widetilde{\nabla}_k-\nabla f(x_k)\|_*^2
    \leq
    \frac{4\sigma_b}{bn}\sum_{i=1}^n\|a_i^k\|_*^2.
\]
By component Lipschitz continuity on the deterministic set $D$,
\[
    \|a_i^k\|_*^2
    =
    \|\nabla f_i(x_k)-\nabla f_i(\phi_i^k)\|_*^2
    \leq
    M^2\|x_k-\phi_i^k\|^2.
\]
This proves the lemma.

\subsection{Proof of Theorem \ref{theo3}}
Let
\[
    \widetilde{x}_{k+1}
    =
    \operatorname{Prox}^{\psi}_{h,\eta_k}(x_k,\nabla f(x_k)).
\]
Let $e_k:=\nabla f(x_k)-\widetilde{\nabla}_k$. Using upper smooth-adaptability of $f$ and Lemma \ref{lem:prox-three-point} for the stochastic prox step with $u=\widetilde{x}_{k+1}$ gives
\[
\begin{split}
    F(x_{k+1})
    \leq{}&
    f(x_k)+h(\widetilde{x}_{k+1})
    +\langle\nabla f(x_k),x_{k+1}-x_k\rangle
    -\langle\widetilde{\nabla}_k,x_{k+1}-\widetilde{x}_{k+1}\rangle\\
    &+\left(\bar L-\frac1{\eta_k}\right)D_\psi(x_{k+1},x_k)
    +\frac1{\eta_k}D_\psi(\widetilde{x}_{k+1},x_k)
    -\frac1{\eta_k}D_\psi(\widetilde{x}_{k+1},x_{k+1}).
\end{split}
\]
Applying Lemma \ref{lem:prox-three-point} to the full-gradient prox step with $u=x_k$ gives
\[
    h(\widetilde{x}_{k+1})-h(x_k)
    +\langle\nabla f(x_k),\widetilde{x}_{k+1}-x_k\rangle
    \leq
    -\frac1{\eta_k}D_\psi(x_k,\widetilde{x}_{k+1})
    -\frac1{\eta_k}D_\psi(\widetilde{x}_{k+1},x_k).
\]
Combining the last two displays yields
\[
\begin{split}
    F(x_{k+1})
    \leq{}&
    F(x_k)+\langle e_k,x_{k+1}-\widetilde{x}_{k+1}\rangle
    +\left(\bar L-\frac1{\eta_k}\right)D_\psi(x_{k+1},x_k)\\
    &-\frac1{\eta_k}D_\psi(x_k,\widetilde{x}_{k+1})
    -\frac1{\eta_k}D_\psi(\widetilde{x}_{k+1},x_{k+1}).
\end{split}
\]
Young's inequality and the strong convexity of $\psi$ imply
\[
    \langle e_k,x_{k+1}-\widetilde{x}_{k+1}\rangle
    \leq
    \frac{\eta_k}{2}\|e_k\|_*^2
    +\frac1{\eta_k}D_\psi(\widetilde{x}_{k+1},x_{k+1}).
\]
Taking conditional expectation and applying Lemma \ref{nonconvexVariance}, we obtain
\[
\begin{split}
    \mathbb{E}_k[F(x_{k+1})]
    \leq{}&
    F(x_k)
    -\frac1{\eta_k}D_\psi(x_k,\widetilde{x}_{k+1})
    +\left(\bar L-\frac1{\eta_k}\right)
    \mathbb{E}_kD_\psi(x_{k+1},x_k)\\
    &+\frac{2\sigma_b\eta_kM^2}{bn}
    \sum_{i=1}^n\|x_k-\phi_i^k\|^2.
\end{split}
\]
Define
\[
    S_k:=\frac1n\sum_{i=1}^n\|x_k-\phi_i^k\|^2,
    \qquad
    p:=\frac{b}{n},
\]
and use the deterministic Lyapunov coefficient
\[
    c:=\frac{4\sigma_b\eta_{\max}M^2n}{b^2}.
\]
Let $d_k:=x_{k+1}-x_k$ and $r_i^k:=x_k-\phi_i^k$. For a fixed sampled batch $B_k$, the table update gives
\[
    x_{k+1}-\phi_i^{k+1}
    =
    \begin{cases}
        d_k, & i\in B_k,\\
        d_k+r_i^k, & i\notin B_k.
    \end{cases}
\]
For any $\gamma>0$, Young's inequality yields
\[
    \|d_k+r_i^k\|^2
    \leq
    \left(1+\frac1\gamma\right)\|d_k\|^2
    +(1+\gamma)\|r_i^k\|^2.
\]
Consequently,
\[
\begin{split}
    S_{k+1}
    &=
    \frac1n\sum_{i\in B_k}\|d_k\|^2
    +\frac1n\sum_{i\notin B_k}\|d_k+r_i^k\|^2\\
    &\leq
    \left(1+\frac{1-p}{\gamma}\right)\|d_k\|^2
    +\frac{1+\gamma}{n}\sum_{i\notin B_k}\|r_i^k\|^2.
\end{split}
\]
Taking conditional expectation and using $D_\psi(x_{k+1},x_k)\geq\frac12\|d_k\|^2$ and
$\mathbb{E}_k[\mathbf{1}_{\{i\notin B_k\}}]=1-p$, we obtain
\begin{equation} \label{eq:table-distance-recursion}
    \mathbb{E}_k[S_{k+1}]
    \leq
    2\left(1+\frac{1-p}{\gamma}\right)
    \mathbb{E}_kD_\psi(x_{k+1},x_k)
    +(1+\gamma)(1-p)S_k.
\end{equation}
Combining (\ref{eq:table-distance-recursion}) with the one-step bound for $F(x_{k+1})$ gives
\[
\begin{split}
    \mathbb{E}_k[F(x_{k+1})+cS_{k+1}]
    \leq{}&
    F(x_k)
    -\frac1{\eta_k}D_\psi(x_k,\widetilde{x}_{k+1})\\
    &+
    \left[
        \bar L-\frac1{\eta_k}
        +2c\left(1+\frac{1-p}{\gamma}\right)
    \right]\mathbb{E}_kD_\psi(x_{k+1},x_k)\\
    &+
    \left[
        c(1+\gamma)(1-p)
        +\frac{2\sigma_b\eta_kM^2}{b}
    \right]S_k.
\end{split}
\]
Choose $\gamma=p/2$. Since $(1+\gamma)(1-p)\leq1-p/2$ and $\eta_k\leq\eta_{\max}$, the definition of $c$ gives
\[
    c(1+\gamma)(1-p)
    +\frac{2\sigma_b\eta_kM^2}{b}
    \leq c.
\]
Moreover,
\[
    2\left(1+\frac{1-p}{\gamma}\right)
    =
    \frac{2(2-p)}{p}
    =
    \frac{2(2n-b)}{b}.
\]
Therefore,
\[
    2c\left(1+\frac{1-p}{\gamma}\right)
    \leq
    \frac{8\sigma_b\eta_{\max}M^2n(2n-b)}{b^3}
    =
    \Gamma_b\eta_{\max}.
\]
The condition in Theorem \ref{theo3} implies
\[
    \bar L-\frac1{\eta_k}
    +2c\left(1+\frac{1-p}{\gamma}\right)
    \leq0
\]
for every $k$, because $\eta_k\leq\eta_{\max}$. Hence
\[
    \mathbb{E}_k[F(x_{k+1})+cS_{k+1}]
    \leq
    F(x_k)+cS_k
    -\frac1{\eta_k}D_\psi(x_k,\widetilde{x}_{k+1}).
\]
Taking total expectation and summing gives, using $S_0=0$, $S_K\geq0$, and $F(x_K)\geq F_{\inf}$,
\[
    \sum_{k=0}^{K-1}
    \mathbb{E}\left[
        \frac1{\eta_k}D_\psi(x_k,\widetilde{x}_{k+1})
    \right]
    \leq
    F(x_0)-F_{\inf}.
\]
Since $D_\psi(u,v)\geq\frac12\|u-v\|^2$ and $\eta_k\leq\eta_{\max}$,
\[
    \frac{1}{2\eta_{\max}}
    \sum_{k=0}^{K-1}\mathbb{E}\|x_k-\widetilde{x}_{k+1}\|^2
    \leq
    F(x_0)-F_{\inf}.
\]
For $R$ sampled independently and uniformly from $\{0,\ldots,K-1\}$ and $\mathcal R_R=(x_R-\widetilde{x}_{R+1})/\eta_{\max}$, this yields
\[
    \mathbb{E}\|\mathcal R_R\|^2
    \leq
    \frac{2(F(x_0)-F_{\inf})}{K\eta_{\max}}.
\]

\section{Additional experiments} \label{Additional experiments}
\subsection{Additional experiments for logistic loss}
Figures \ref{fig: logistic loss bz1}--\ref{fig: logistic loss bz64} extend the initial-step experiment in Figure \ref{fig:grad_norm-step_size} to batch sizes $1$, $16$, and $64$. Ada-BPSG is denoted by SAGA-BB in this Euclidean setting.
\begin{figure}[tp!]
    \centering
    \begin{subfigure}[b]{0.235\textwidth}
        \centering
        \includegraphics[width=\linewidth]{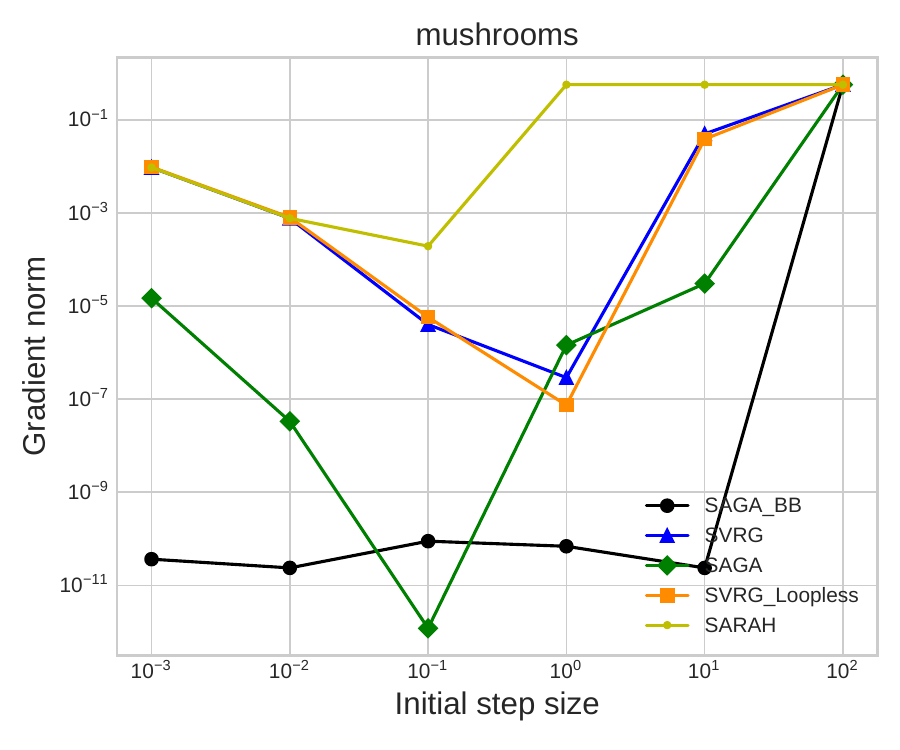}
        \caption{}
    \end{subfigure}\hfill
    \begin{subfigure}[b]{0.235\textwidth}
        \centering
        \includegraphics[width=\linewidth]{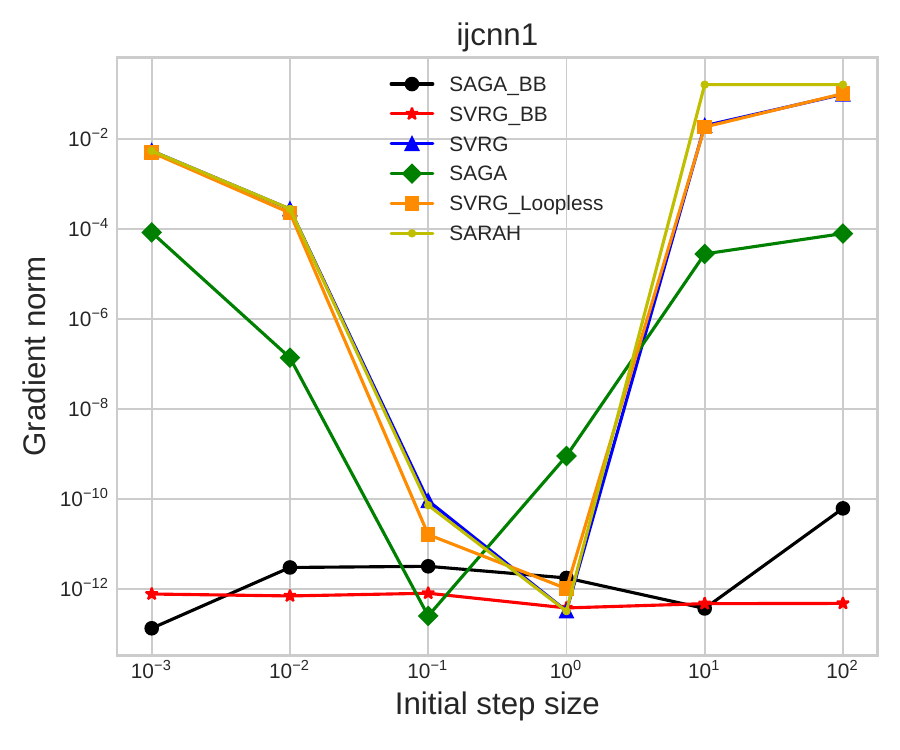}
        \caption{}
    \end{subfigure}\hfill
    \begin{subfigure}[b]{0.235\textwidth}
        \centering
        \includegraphics[width=\linewidth]{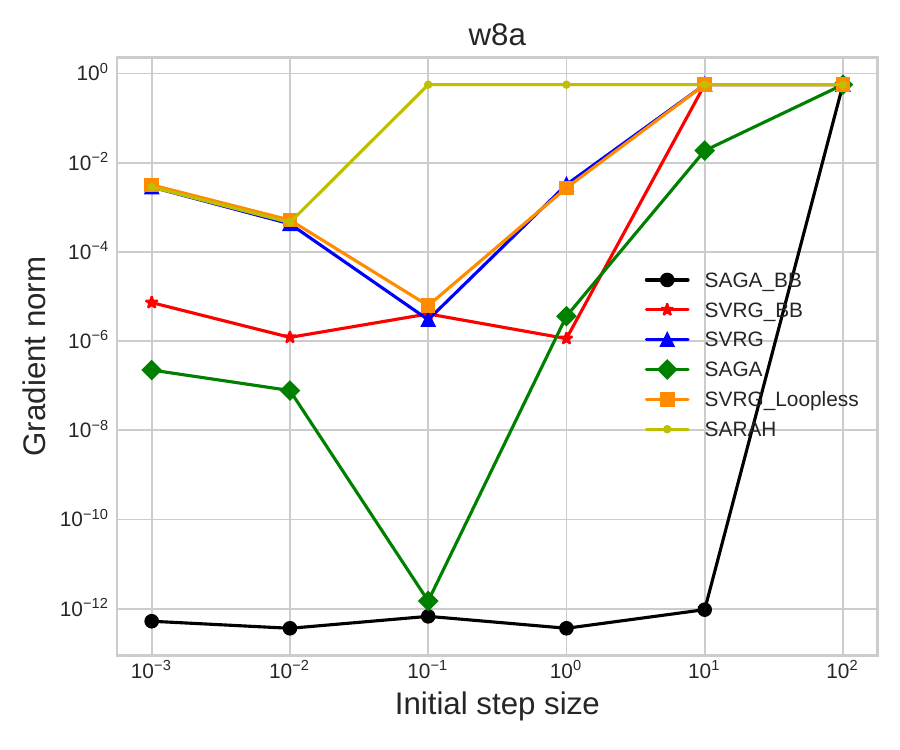}
        \caption{}
    \end{subfigure}\hfill
    \begin{subfigure}[b]{0.235\textwidth}
        \centering
        \includegraphics[width=\linewidth]{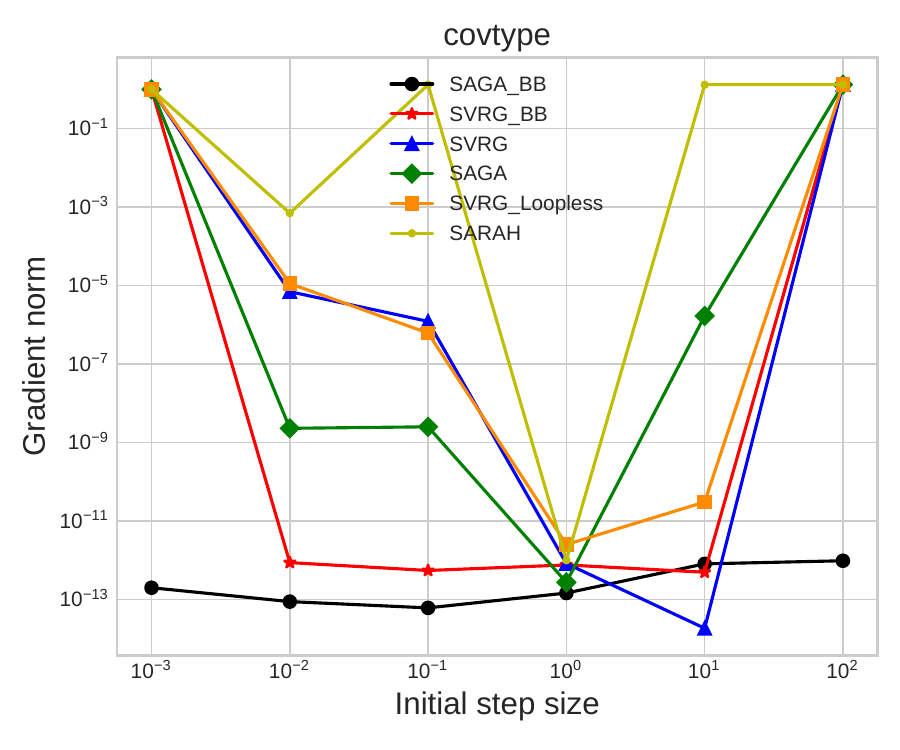}
        \caption{}
    \end{subfigure}
    \caption{Initial-step sensitivity for logistic loss with batch size $b=1$. Gradient norms are capped at $1$ for readability.}
    \label{fig: logistic loss bz1}
\end{figure}

\begin{figure}[tp!]
    \centering
    \begin{subfigure}[b]{0.235\textwidth}
        \centering
        \includegraphics[width=\linewidth]{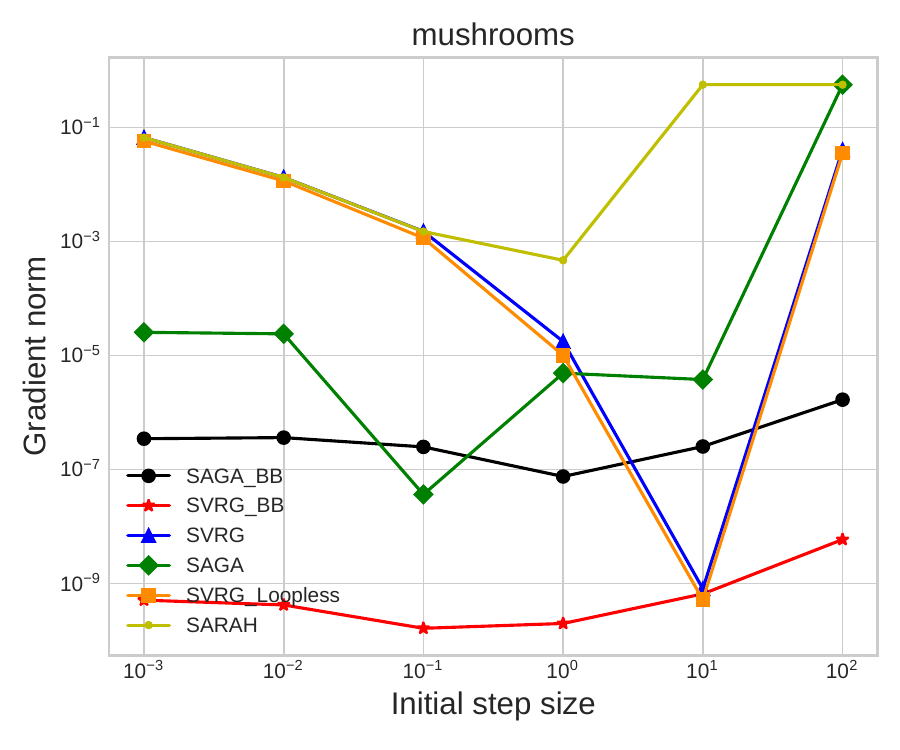}
        \caption{}
    \end{subfigure}\hfill
    \begin{subfigure}[b]{0.235\textwidth}
        \centering
        \includegraphics[width=\linewidth]{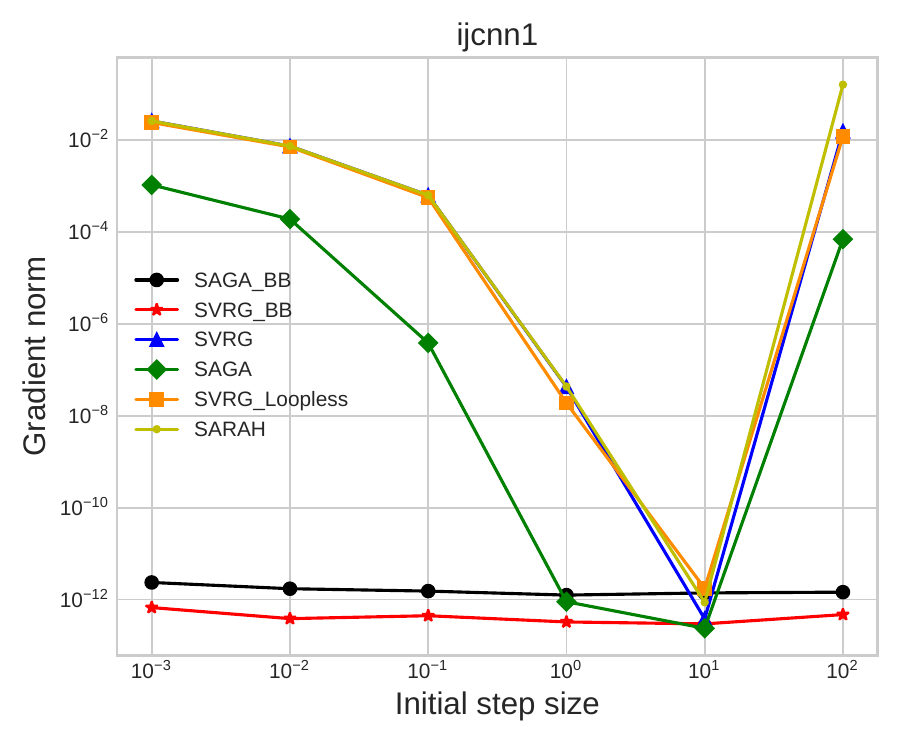}
        \caption{}
    \end{subfigure}\hfill
    \begin{subfigure}[b]{0.235\textwidth}
        \centering
        \includegraphics[width=\linewidth]{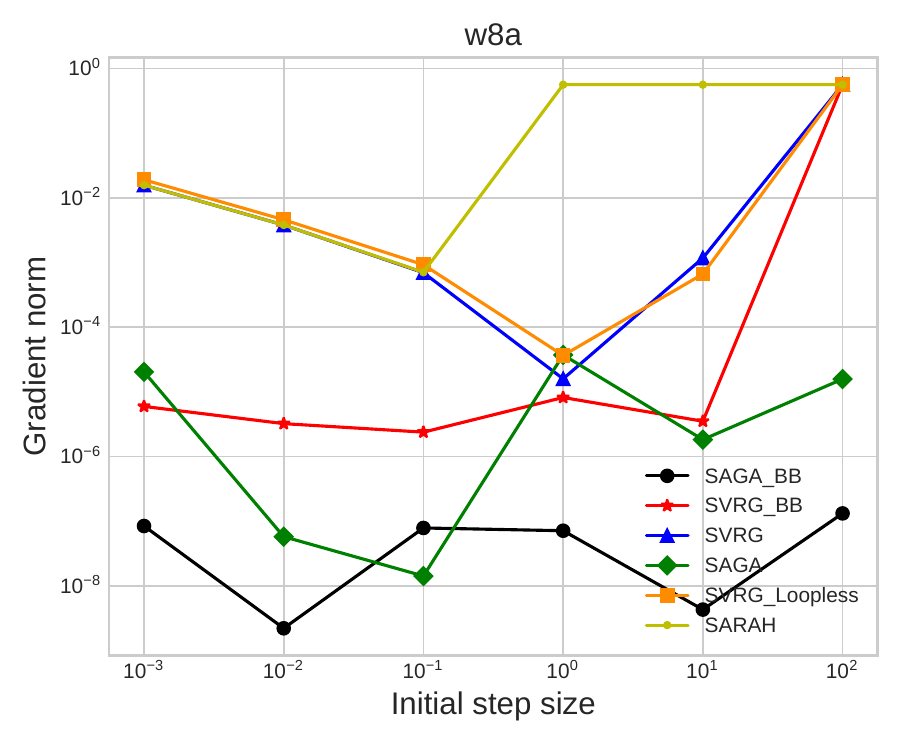}
        \caption{}
    \end{subfigure}\hfill
    \begin{subfigure}[b]{0.235\textwidth}
        \centering
        \includegraphics[width=\linewidth]{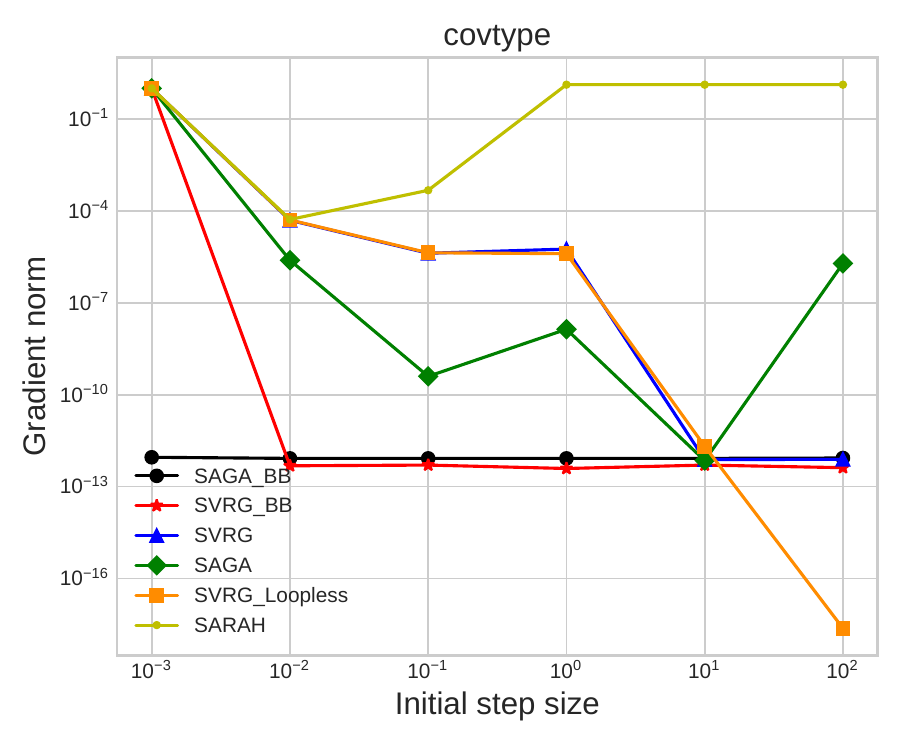}
        \caption{}
    \end{subfigure}
    \caption{Initial-step sensitivity for logistic loss with batch size $b=16$. Gradient norms are capped at $1$ for readability.}
    \label{fig: logistic loss bz16}
\end{figure}

\begin{figure}[tp!]
    \centering
    \begin{subfigure}[b]{0.235\textwidth}
        \centering
        \includegraphics[width=\linewidth]{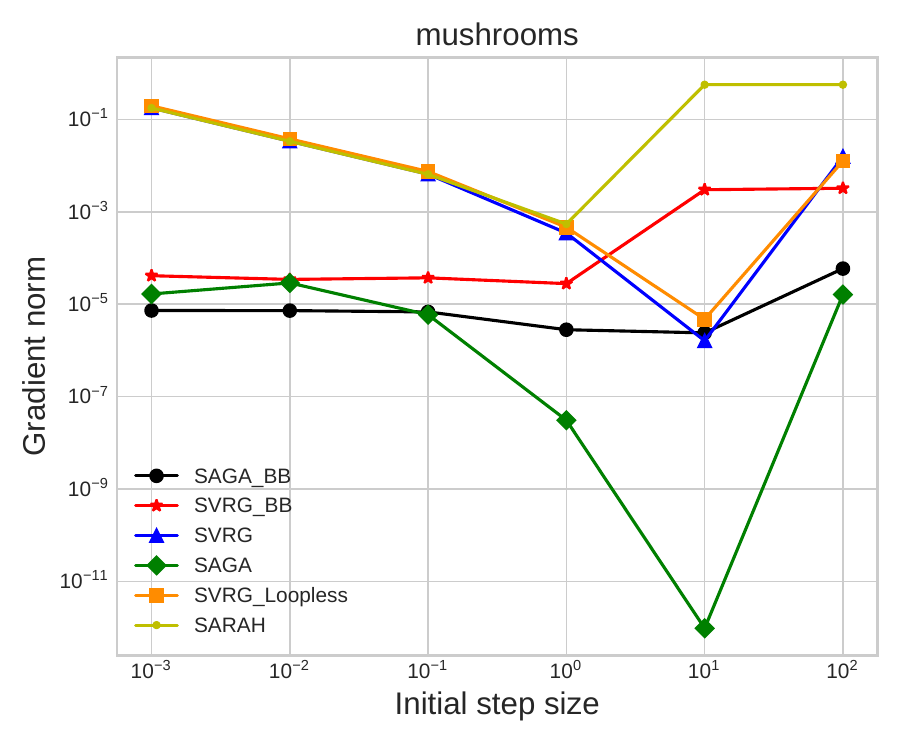}
        \caption{}
    \end{subfigure}\hfill
    \begin{subfigure}[b]{0.235\textwidth}
        \centering
        \includegraphics[width=\linewidth]{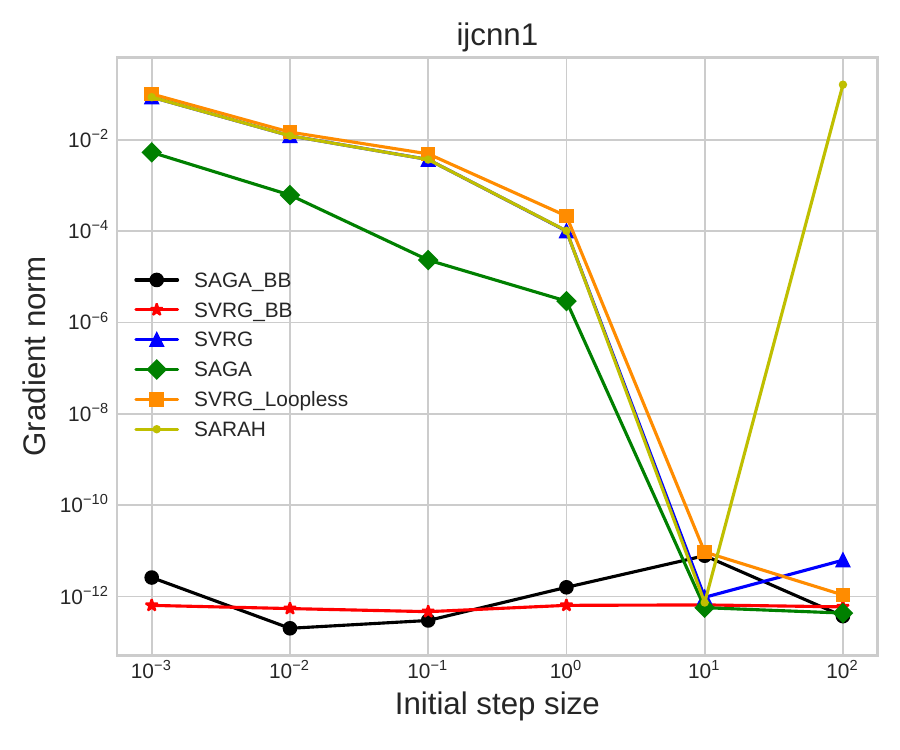}
        \caption{}
    \end{subfigure}\hfill
    \begin{subfigure}[b]{0.235\textwidth}
        \centering
        \includegraphics[width=\linewidth]{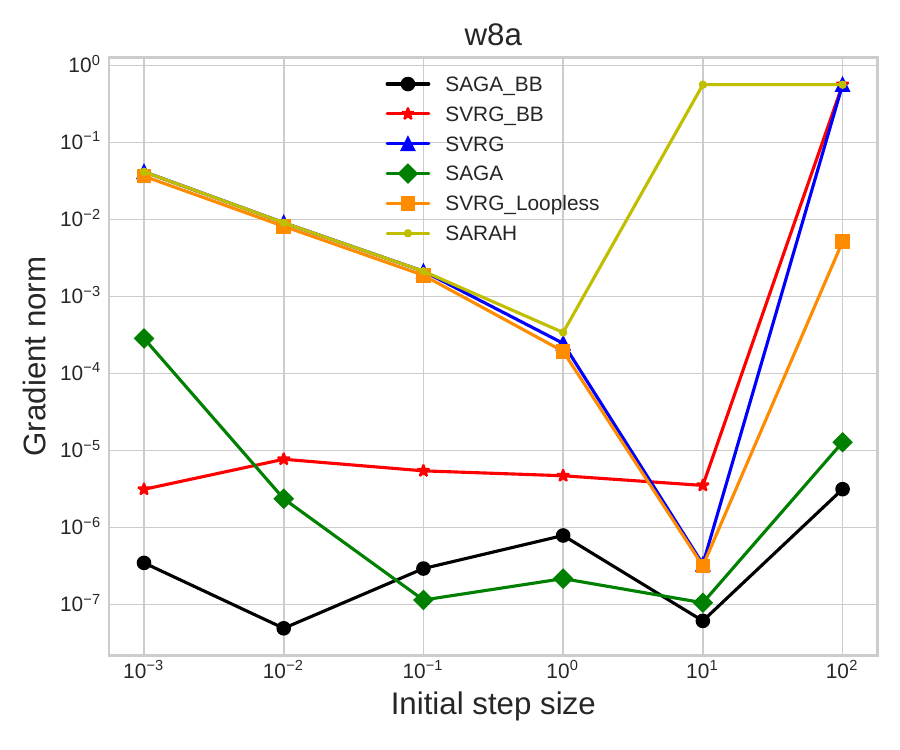}
        \caption{}
    \end{subfigure}\hfill
    \begin{subfigure}[b]{0.235\textwidth}
        \centering
        \includegraphics[width=\linewidth]{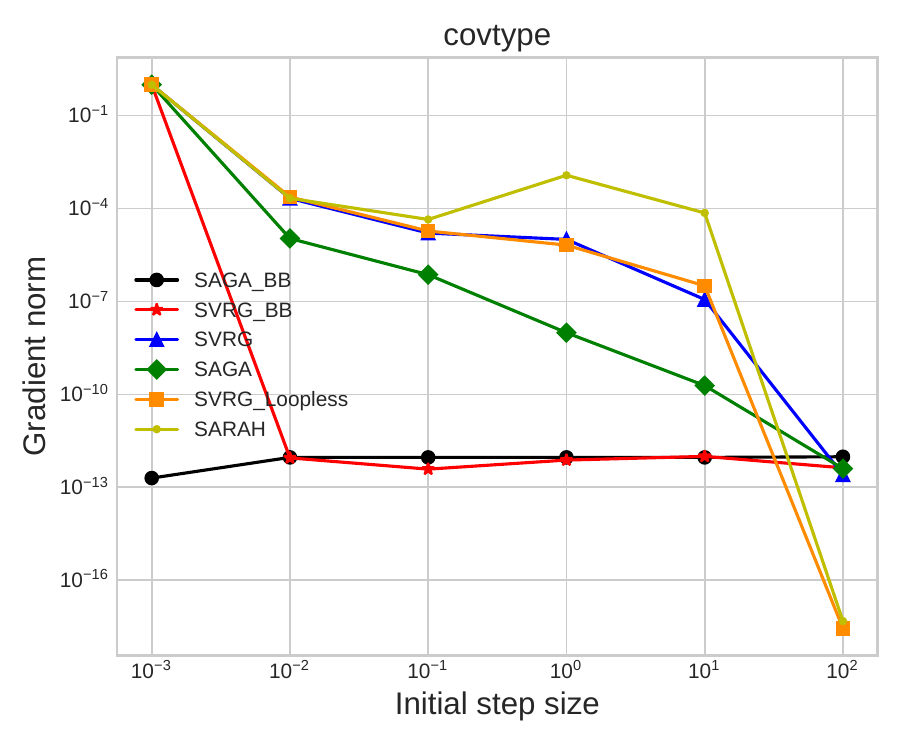}
        \caption{}
    \end{subfigure}
    \caption{Initial-step sensitivity for logistic loss with batch size $b=64$. Gradient norms are capped at $1$ for readability.}
    \label{fig: logistic loss bz64}
\end{figure}

Across these batch sizes, SAGA-BB preserves the broad low-gradient region observed at $b=8$, whereas the nonadaptive baselines remain sensitive to the initial scale. This consistency supports the intended role of the adaptive refresh: it reduces dependence on both the starting step and the number of component updates aggregated at each iteration.

\subsection{Additional experiments for Huber loss}\label{sub: Huber loss}
We next replace logistic loss by Huber loss in the same $\ell_2$-regularized binary-classification problems. The Huber penalty is quadratic for residuals near zero and linear beyond a fixed threshold, combining local smoothness with reduced sensitivity to large residuals. This change tests whether the step-size behavior persists across loss geometries as well as batch sizes.

\begin{figure}[tp!]
    \centering
    \begin{subfigure}[b]{0.235\textwidth}
        \centering
        \includegraphics[width=\linewidth]{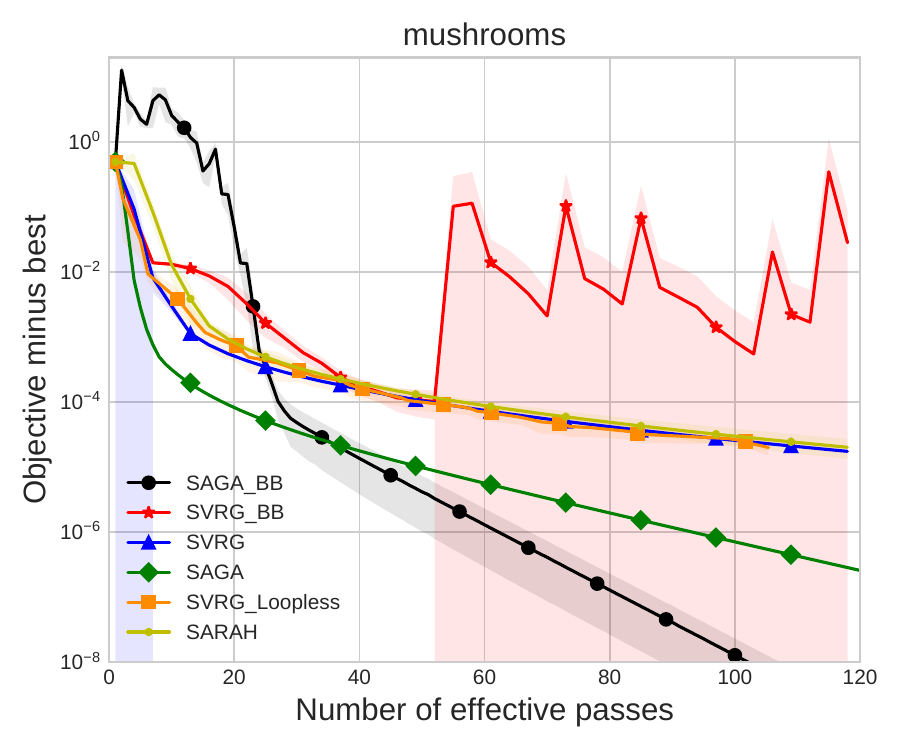}
        \caption{}
    \end{subfigure}\hfill
    \begin{subfigure}[b]{0.235\textwidth}
        \centering
        \includegraphics[width=\linewidth]{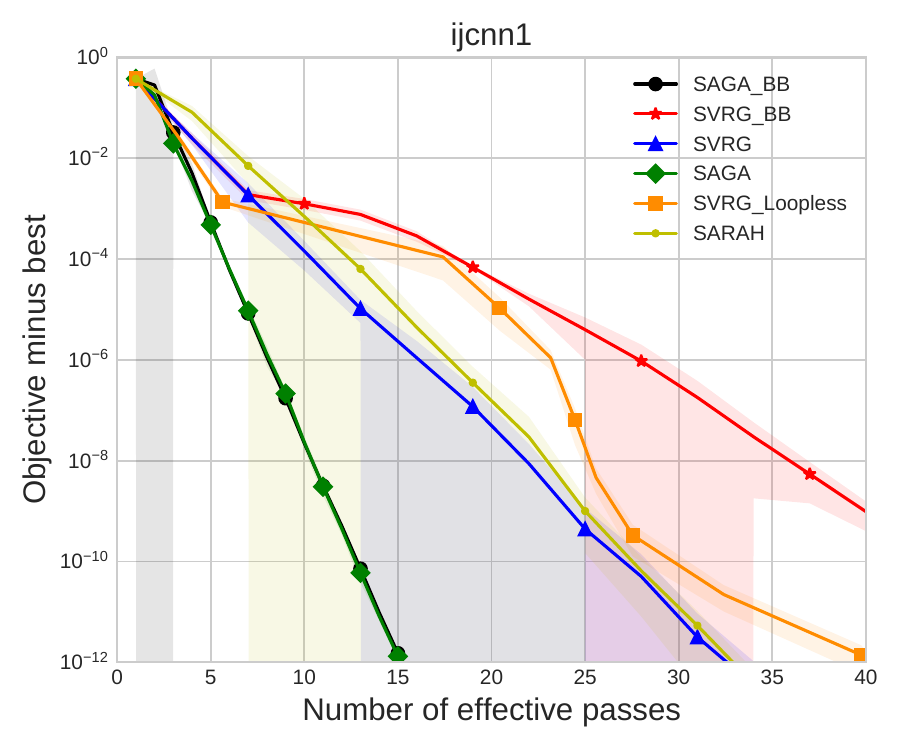}
        \caption{}
    \end{subfigure}\hfill
    \begin{subfigure}[b]{0.235\textwidth}
        \centering
        \includegraphics[width=\linewidth]{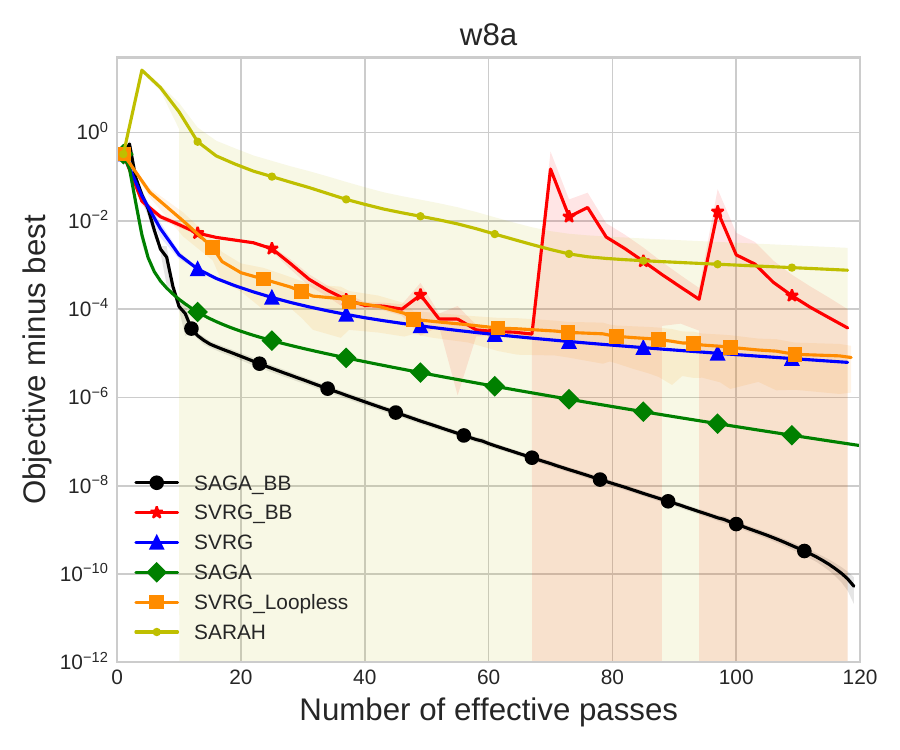}
        \caption{}
    \end{subfigure}\hfill
    \begin{subfigure}[b]{0.235\textwidth}
        \centering
        \includegraphics[width=\linewidth]{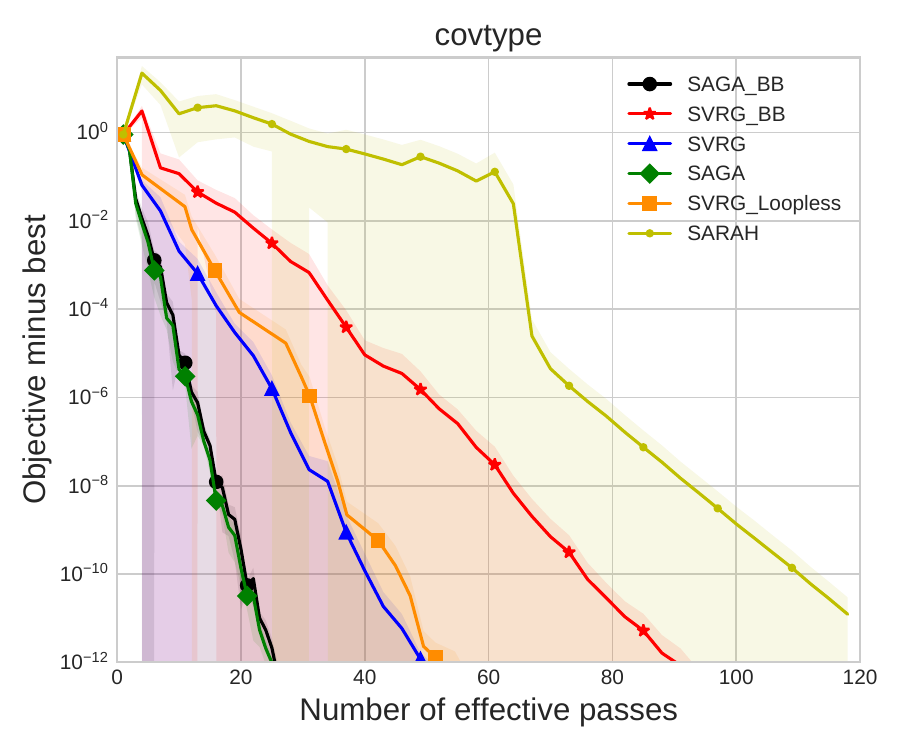}
        \caption{}
    \end{subfigure}
    \caption{Gradient efficiency on four LibSVM datasets with Huber loss. Lines and shaded regions show the mean and standard deviation over five runs.}
    \label{fig: vis huber loss}
\end{figure}

\begin{figure}[tp!]
    \centering
    \begin{subfigure}[b]{0.235\textwidth}
        \centering
        \includegraphics[width=\linewidth]{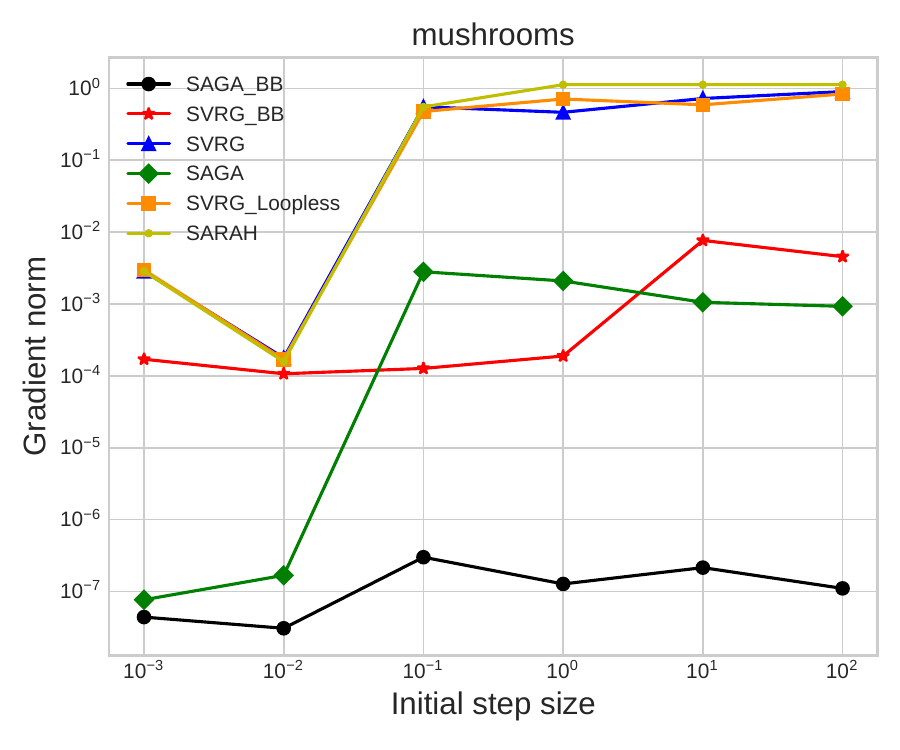}
        \caption{}
    \end{subfigure}\hfill
    \begin{subfigure}[b]{0.235\textwidth}
        \centering
        \includegraphics[width=\linewidth]{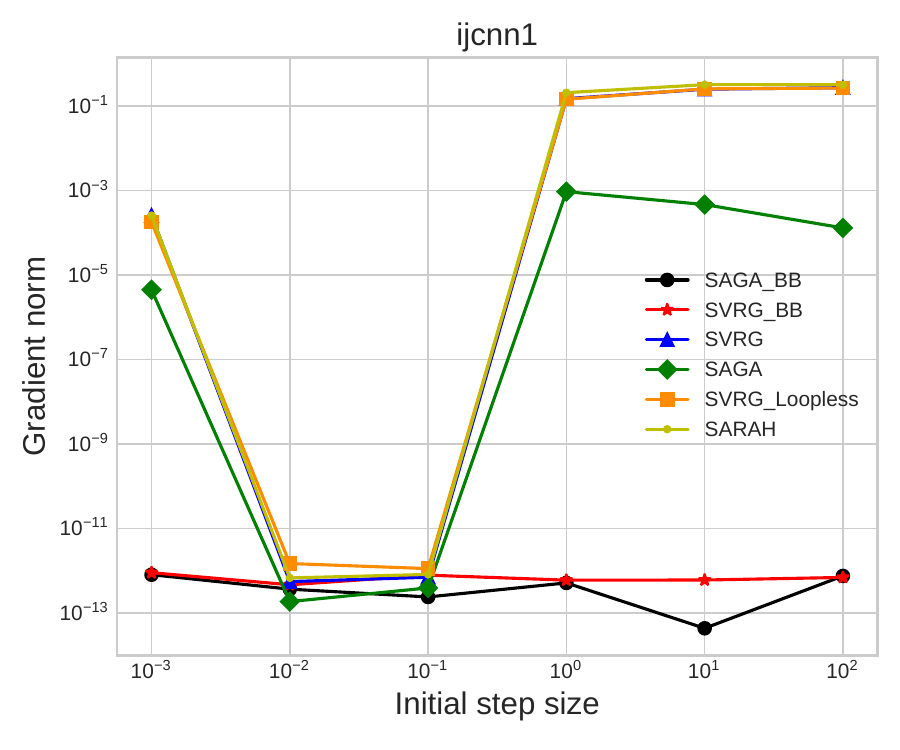}
        \caption{}
    \end{subfigure}\hfill
    \begin{subfigure}[b]{0.235\textwidth}
        \centering
        \includegraphics[width=\linewidth]{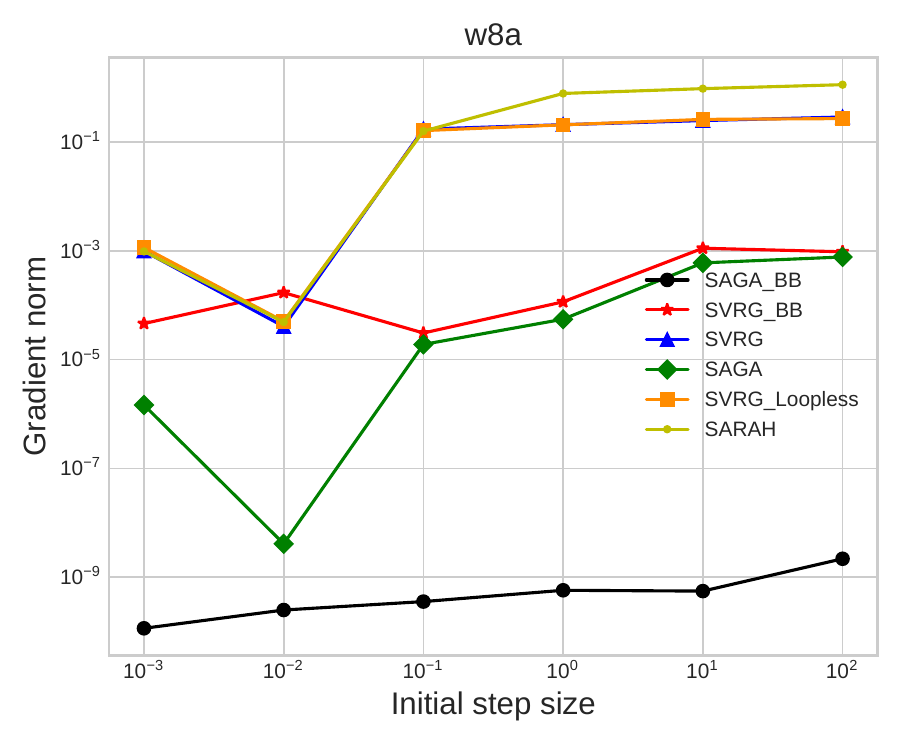}
        \caption{}
    \end{subfigure}\hfill
    \begin{subfigure}[b]{0.235\textwidth}
        \centering
        \includegraphics[width=\linewidth]{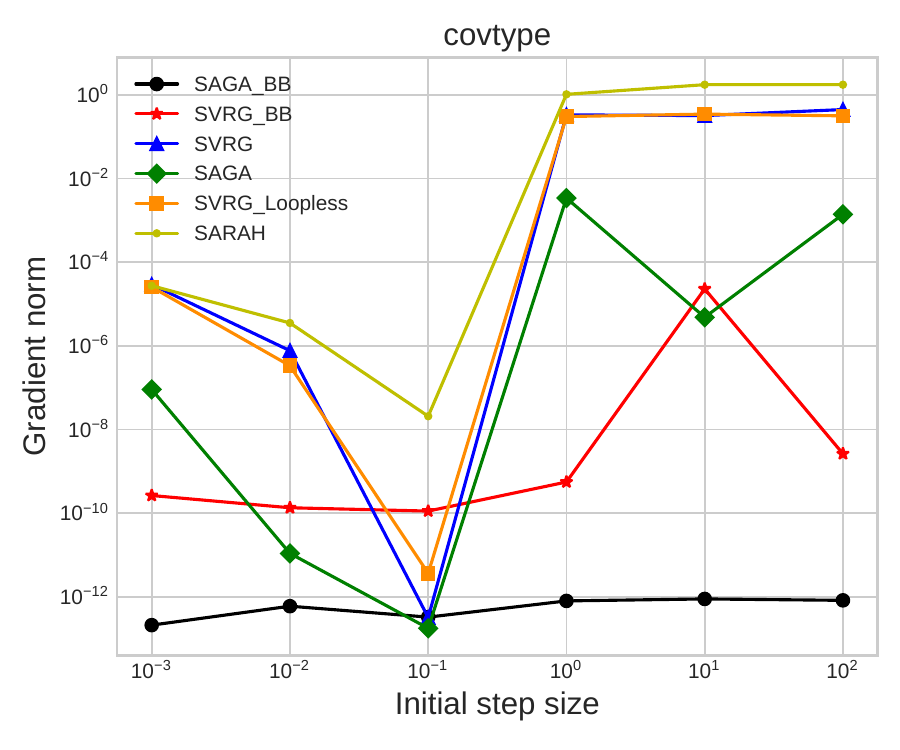}
        \caption{}
    \end{subfigure}
    \caption{Initial-step sensitivity for Huber loss with batch size $b=1$. Gradient norms are capped at $1$ for readability.}
    \label{fig: huber loss bz1}
\end{figure}

\begin{figure}[tp!]
    \centering
    \begin{subfigure}[b]{0.235\textwidth}
        \centering
        \includegraphics[width=\linewidth]{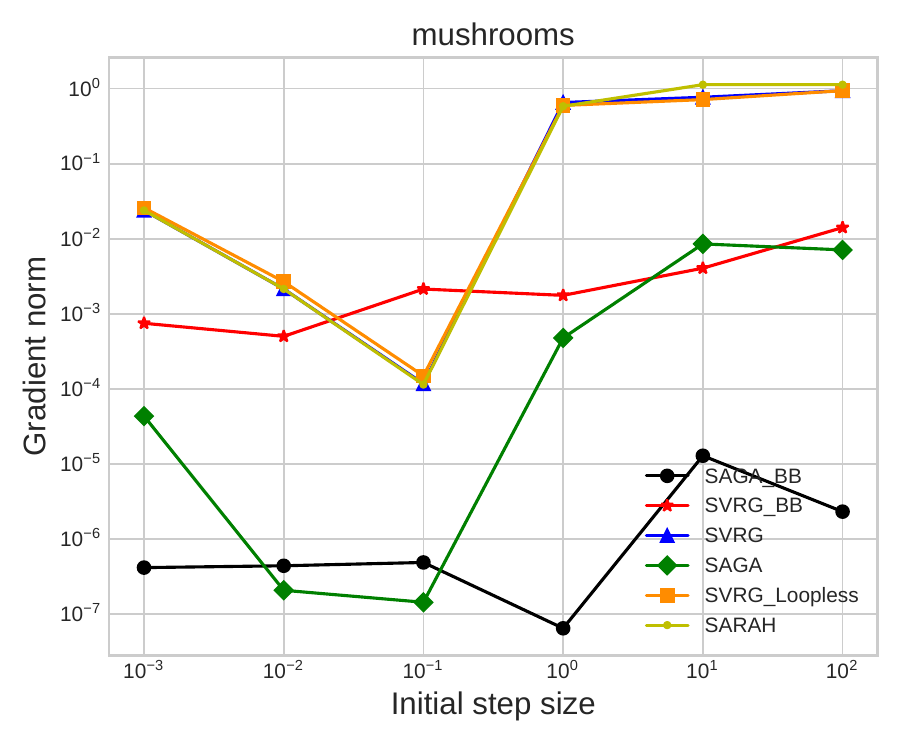}
        \caption{}
    \end{subfigure}\hfill
    \begin{subfigure}[b]{0.235\textwidth}
        \centering
        \includegraphics[width=\linewidth]{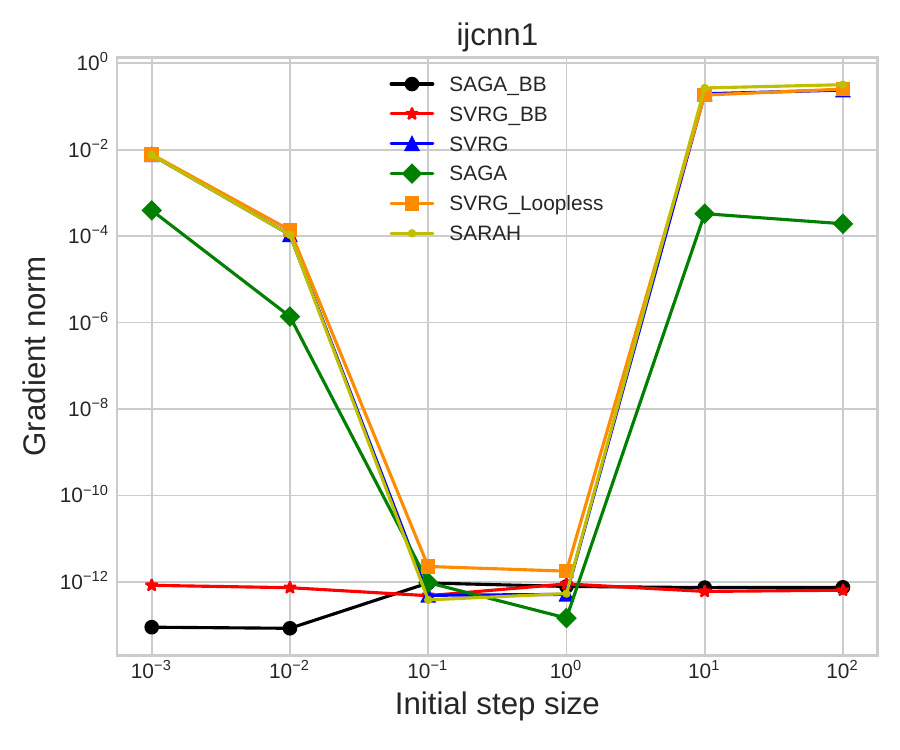}
        \caption{}
    \end{subfigure}\hfill
    \begin{subfigure}[b]{0.235\textwidth}
        \centering
        \includegraphics[width=\linewidth]{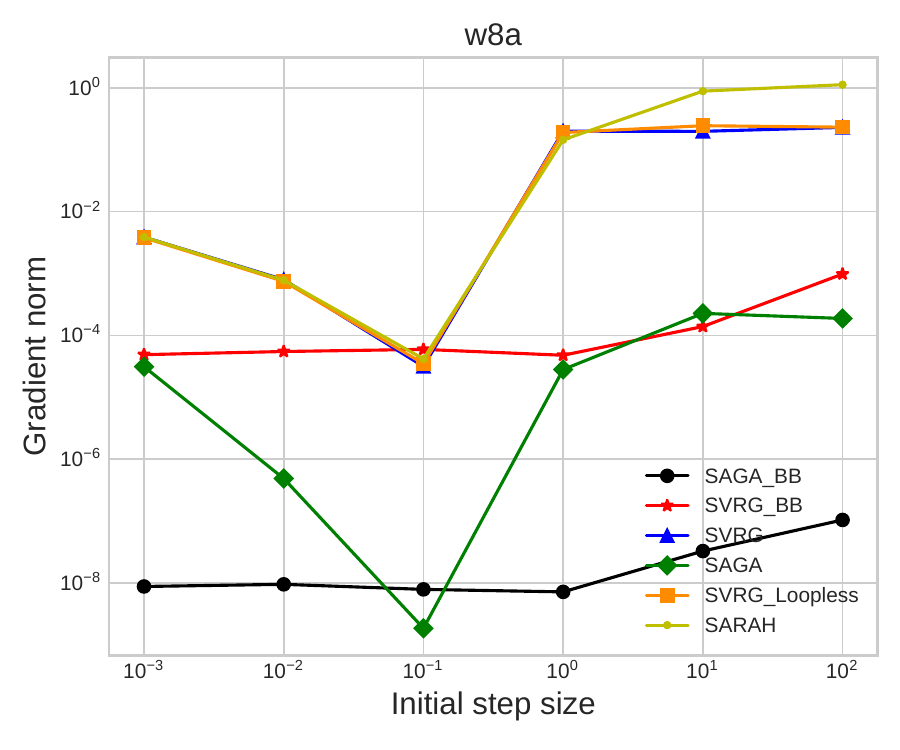}
        \caption{}
    \end{subfigure}\hfill
    \begin{subfigure}[b]{0.235\textwidth}
        \centering
        \includegraphics[width=\linewidth]{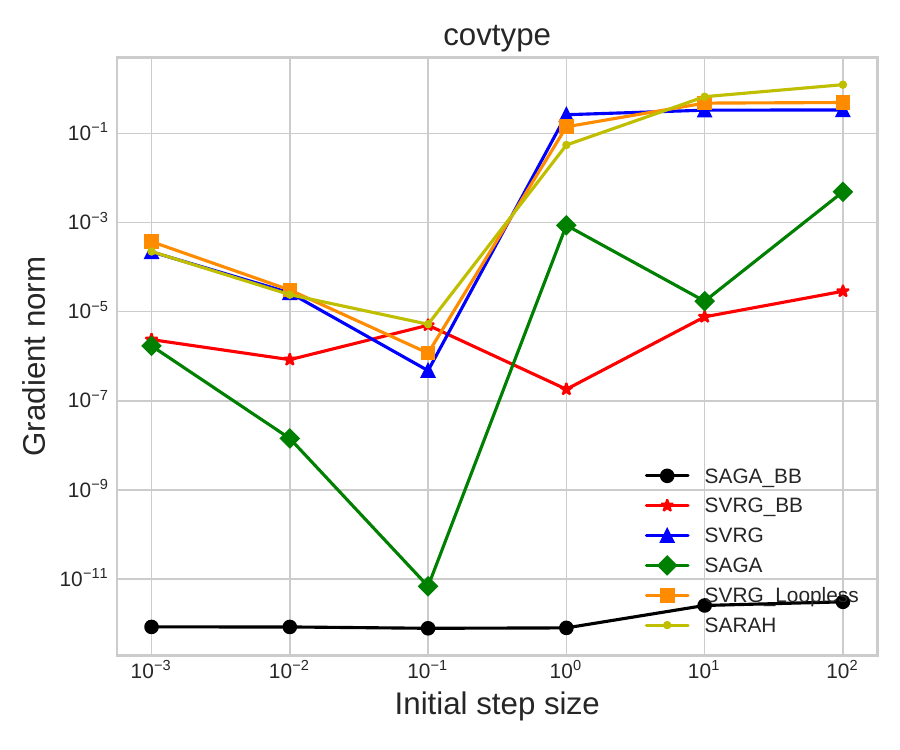}
        \caption{}
    \end{subfigure}
    \caption{Initial-step sensitivity for Huber loss with batch size $b=8$. Gradient norms are capped at $1$ for readability.}
    \label{fig: huber loss bz8}
\end{figure}

\begin{figure}[tp!]
    \centering
    \begin{subfigure}[b]{0.235\textwidth}
        \centering
        \includegraphics[width=\linewidth]{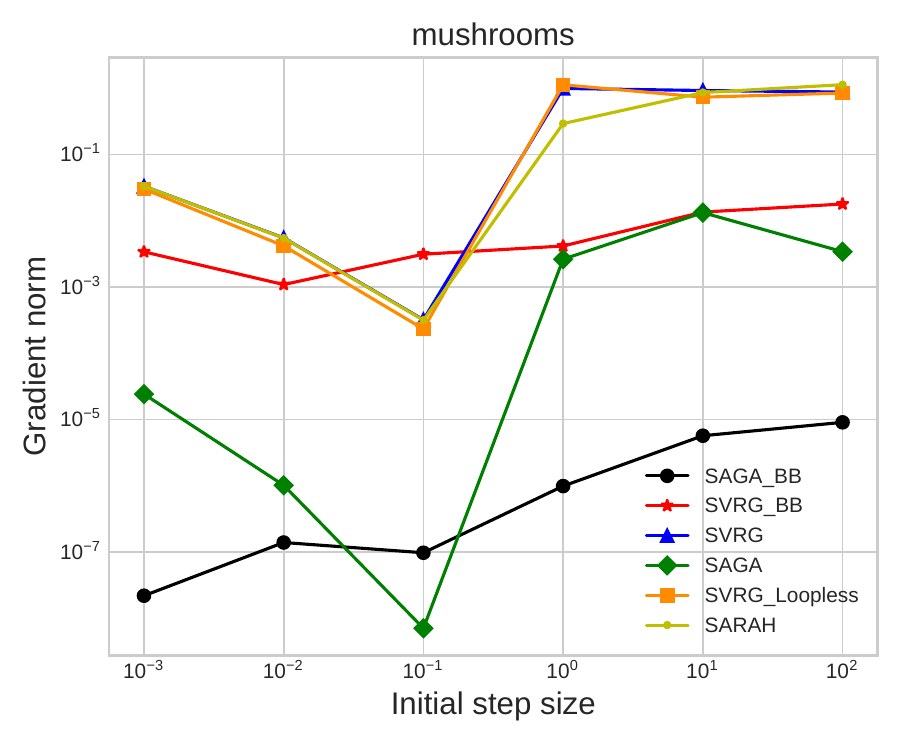}
        \caption{}
    \end{subfigure}\hfill
    \begin{subfigure}[b]{0.235\textwidth}
        \centering
        \includegraphics[width=\linewidth]{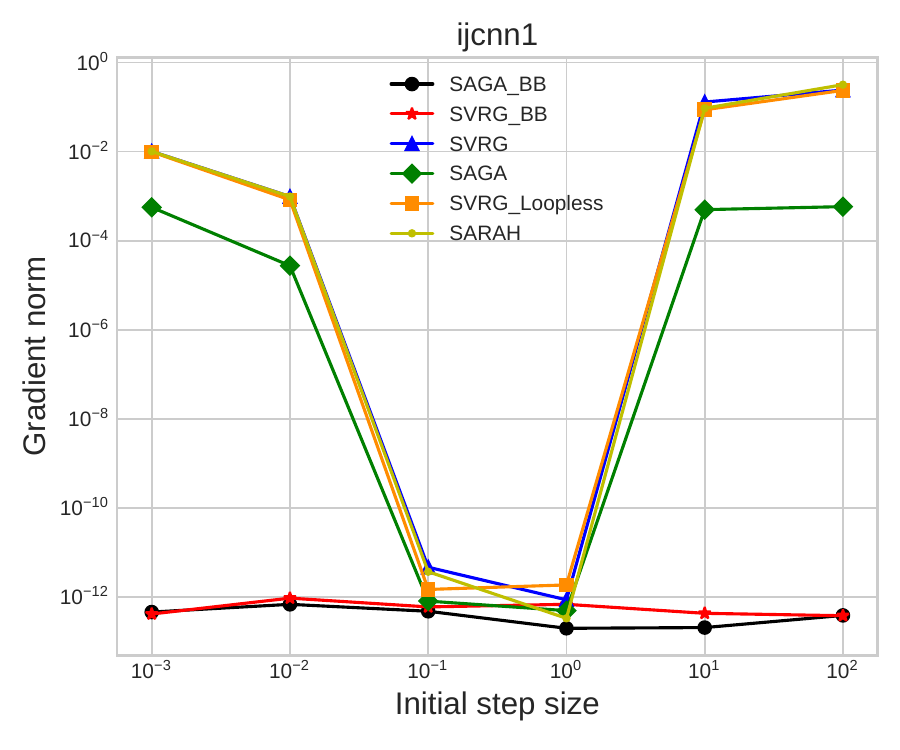}
        \caption{}
    \end{subfigure}\hfill
    \begin{subfigure}[b]{0.235\textwidth}
        \centering
        \includegraphics[width=\linewidth]{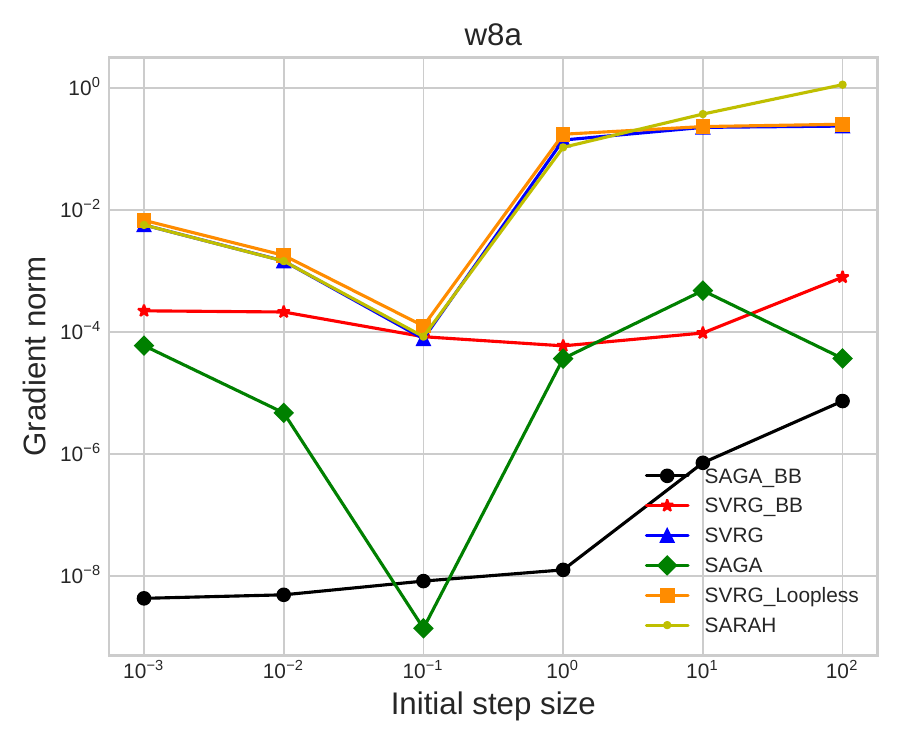}
        \caption{}
    \end{subfigure}\hfill
    \begin{subfigure}[b]{0.235\textwidth}
        \centering
        \includegraphics[width=\linewidth]{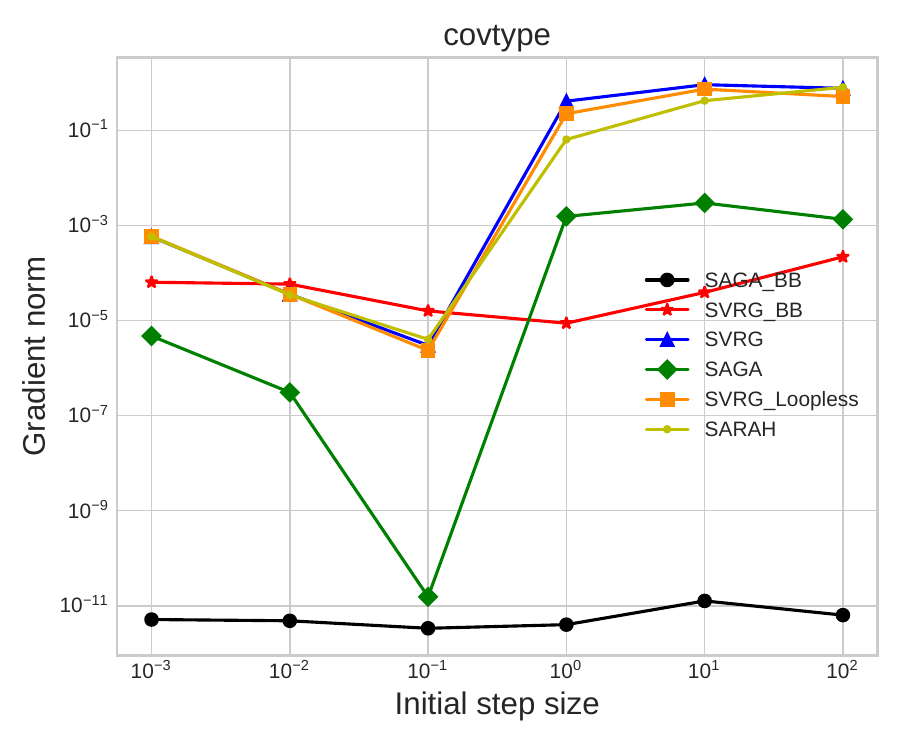}
        \caption{}
    \end{subfigure}
    \caption{Initial-step sensitivity for Huber loss with batch size $b=16$. Gradient norms are capped at $1$ for readability.}
    \label{fig: huber loss bz16}
\end{figure}

\begin{figure}[tp!]
    \centering
    \begin{subfigure}[b]{0.235\textwidth}
        \centering
        \includegraphics[width=\linewidth]{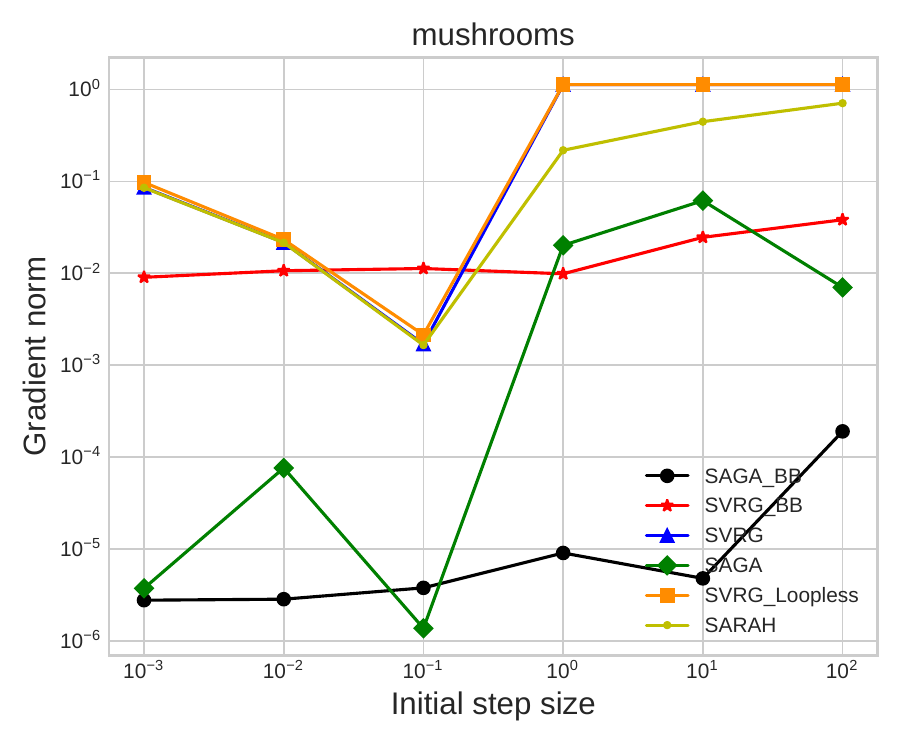}
        \caption{}
    \end{subfigure}\hfill
    \begin{subfigure}[b]{0.235\textwidth}
        \centering
        \includegraphics[width=\linewidth]{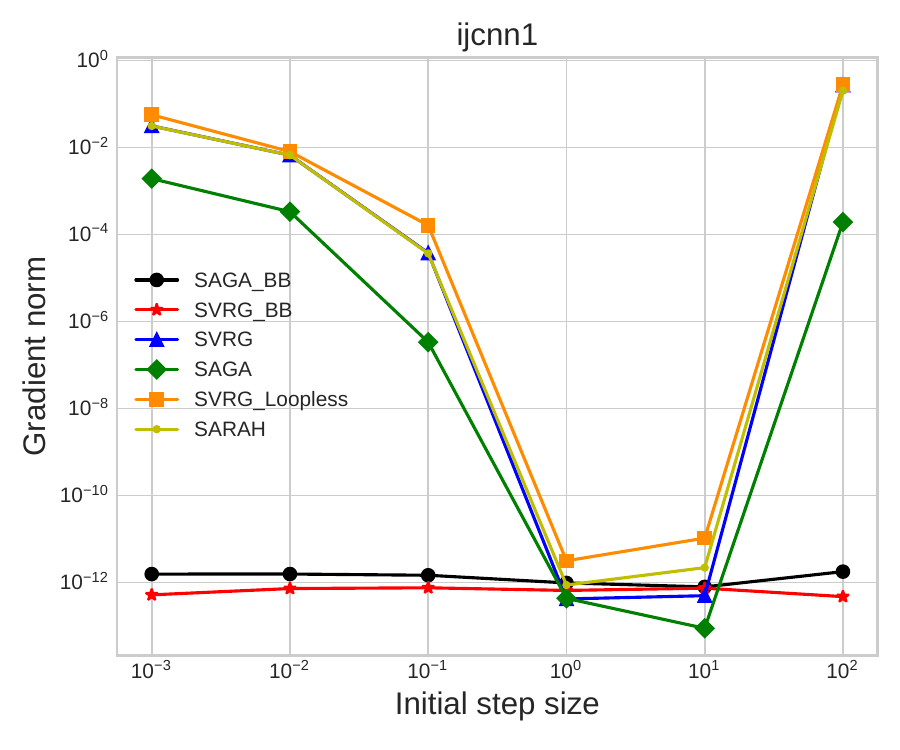}
        \caption{}
    \end{subfigure}\hfill
    \begin{subfigure}[b]{0.235\textwidth}
        \centering
        \includegraphics[width=\linewidth]{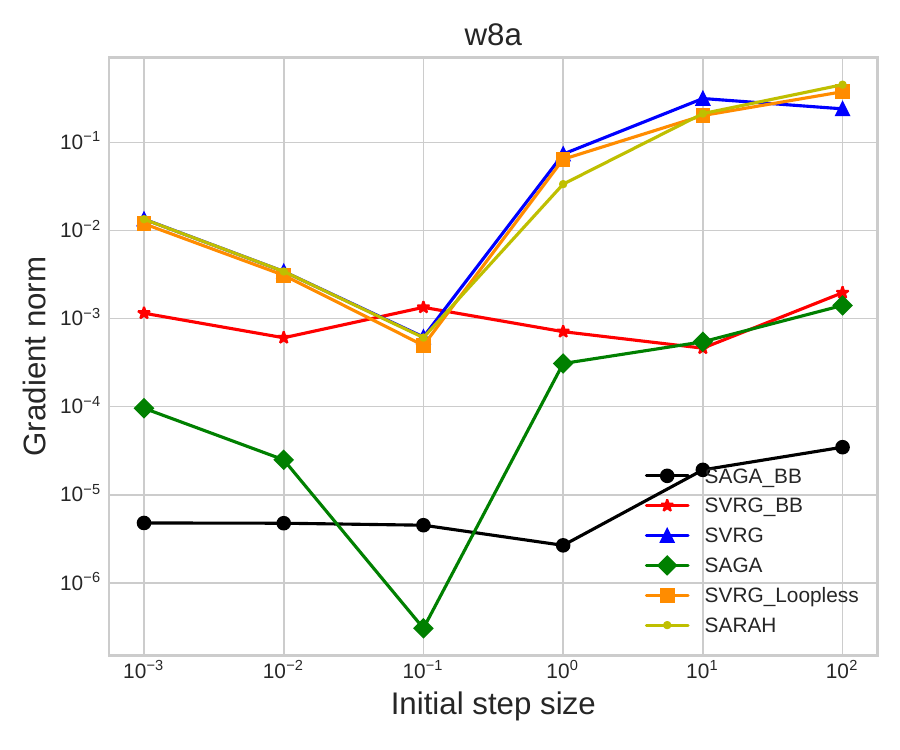}
        \caption{}
    \end{subfigure}\hfill
    \begin{subfigure}[b]{0.235\textwidth}
        \centering
        \includegraphics[width=\linewidth]{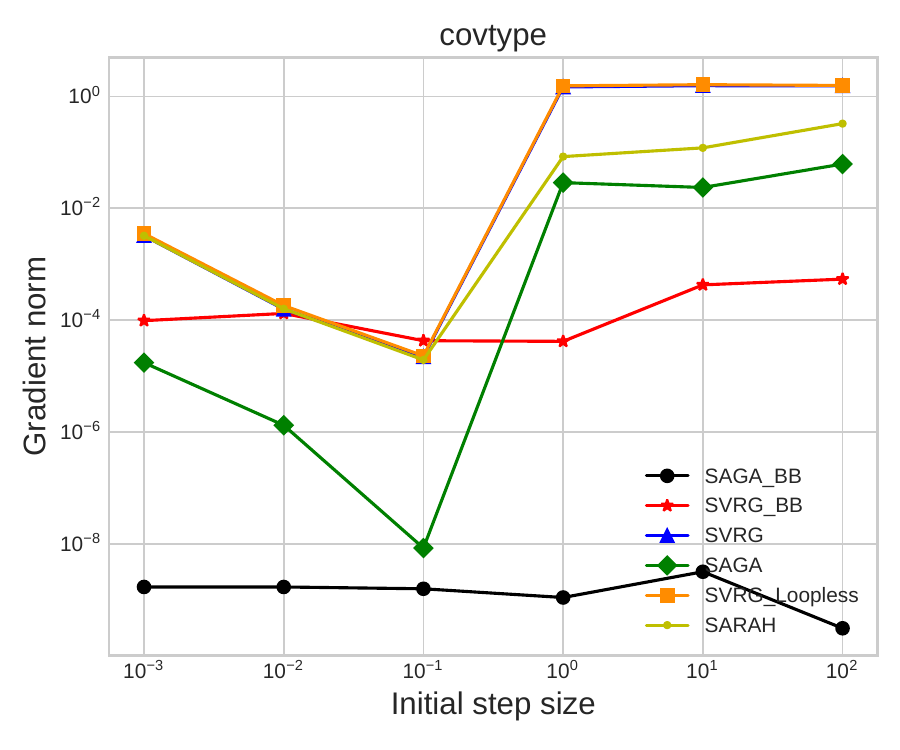}
        \caption{}
    \end{subfigure}
    \caption{Initial-step sensitivity for Huber loss with batch size $b=64$. Gradient norms are capped at $1$ for readability.}
    \label{fig: huber loss bz64}
\end{figure}

Figure \ref{fig: vis huber loss} reproduces the gradient-efficiency pattern observed with logistic loss: SAGA-BB reaches the lowest objective gap in fewer effective passes on each dataset. Figures \ref{fig: huber loss bz1}--\ref{fig: huber loss bz64} test the complementary stability question across four batch sizes; the consistently broad low-gradient region ties the empirical robustness to the stabilized adaptive mechanism rather than to a single logistic-regression configuration.

\end{document}